\title[Eigenvarieties for cuspforms over PEL type Shimura varieties]{Eigenvarieties for cuspforms over PEL type Shimura varieties with dense ordinary locus}
\author{Riccardo Brasca}
\thanks{This work was performed within the framework of the LABEX MILYON (ANR-10-LABX-0070) of Université de Lyon, within the program ``Investissements d'Avenir'' (ANR-11-IDEX-0007) operated by the French National Research Agency (ANR)}
\email{\href{mailto:riccardo.brasca@gmail.com}{riccardo.brasca@gmail.com}}
\urladdr{\url{http://www.imj-prg.fr/~riccardo.brasca/}}
\address{Institut de Mathématiques de Jussieu - Paris Rive Gauche\\
Université Paris Diderot\\
Paris\\
France}
\subjclass[2010]{Primary: 11F55; Secondary: 11F33}
\keywords{$p$-adic modular forms, eigenvarieties, PEL-type Shimura varieties}
\begin{document}

\begin{abstract}
Let $p > 2$ be a prime and let $X$ be a compactified PEL Shimura variety of type (A) or (C) such that $p$ is an unramified prime for the PEL datum and such that the ordinary locus is dense in the reduction of $X$. Using the geometric approach of Andreatta, Iovita, Pilloni, and Stevens we define the notion of families of overconvergent locally analytic $p$-adic modular forms of Iwahoric level for $X$. We show that the system of eigenvalues of any finite slope cuspidal eigenform of Iwahoric level can be deformed to a family of systems of eigenvalues living over an open subset of the weight space. To prove these results, we actually construct eigenvarieties of the expected dimension that parametrize finite slope systems of eigenvalues appearing in the space of families of cuspidal forms.
\end{abstract}

\maketitle

\section*{Introduction} \label{sec: intro}
The main theme of this work is the theory of $p$-adic families of modular eigenforms and the study of the congruences between them. This subject has become more and more important in number theory. For example, one of the main technique to prove modularity results of Galois representations is to prove that a given Galois representation lives in a family of Galois representations attached to a family of $p$-adic modular forms and then invoke a classicity result. To achieve these goals, it is crucial to have a good understanding of $p$-adic families of modular forms.

Historically, the subject started in the seventies with the work of Serre that gave the first example of a $p$-adic family of eigenforms for $\GL_{2/\Q}$: the Eisenstein family. In the eighties Hida was able to prove that, still in the $\GL_{2/\Q}$ case, \emph{any} ordinary eigenform (of Iwahoric level) can be deformed to a family of ordinary eigenform over the weight space. During the nineties, Coleman was able to generalize Hida's result to forms that are overconvergent and of finite slope for the $\U$-operator. The theory culminated in the construction, due to Coleman and Mazur, of the \emph{eigencurve}, a rigid analytic curve living over the weight space that parametrize finite slope overconvergent eigenforms.

It is then natural to try to generalize the theory to groups different from $\GL_{2/\Q}$. Coleman's techniques are based on the theory of $q$-expansion and on the existence of the Eisenstein family, so it seems difficult to generalize them to more general groups (where we can have no cusps or no Eisenstein families). There are already several other approaches in the literature. For example in the work of Kisin and Lai for Hilbert modular forms in \cite{kislai} (recently generalized by Mok and Tan to the Siegel-Hilbert case in \cite{sieghilb}) the authors use a generalization of the Eisenstein family and construct the eigenvariety following Coleman and Mazur. More generally in \cite{urban_eigen}, Urban uses Steven's theory of overconvergent cohomology to show the existence of an eigenvariety for modular symbols associated to any reductive group with discrete series. We also have results by Chenevier for unitary groups in \cite{chenevier} or by Emerton in \cite{emerton} for any reductive group. 

In all these constructions, the eigenvariety parametrizes systems of eigenvalues appearing in the space of overconvergent modular forms rather than modular forms themselves. One of the reasons for this is the lack of the notion of families of overconvergent modular forms (while the notion of family of systems of eigenvalues is easily defined). In this paper, the starting point to build the eigenvariety is the definition of analytic families of overconvergent modular forms. We follow the geometric approach recently introduced by Andreatta, Iovita, Pilloni, and Stevens in the series of papers \cite{over, vincent, AIP, AIPhilb}. The basic idea is quite simple: analytically interpolate the sheaves $\underline \omega^{\underline k}$, where $\underline k$ is an integral weight, defining the sheaves $\underline \omega^\chi$ for any $p$-adic weight $\chi$. More generally, one wants to define a family of sheaves parametrized by the weight space with the property that its pullback to a point $\chi$ of the weight space is the sheaf $\underline \omega^\chi$. Since we are interested in overconvergent modular forms, such a family should be a sheaf $\underline \omega^{\mc U}$ over $X(v) \times \mc U$, where $X(v)$ is a sufficiently small strict neighbourhood of the ordinary locus of the relevant (compactified) Shimura variety and $\mc U$ is an affinoid of the weight space. Then, a family of modular forms parametrized by $\mc U$ is simply a global section of $\underline \omega^{\mc U}$. After having defined Hecke operators, the idea is to use the abstract machinery developed by Buzzard in \cite{buzz_eigen} (that generalizes Coleman's work) to construct the eigenvariety. Unfortunately, we do not know whether one crucial assumption in Buzzard's work is verified by the space of families of modular forms (and we believe that in general it is not), but we are able to show that this assumption is satisfied by \emph{cuspidal} forms. Once this is done Buzzard's results apply and we obtain the eigenvariety.

Let us now state more precisely the results obtained in this paper. Let $K$ be a sufficiently large finite extension of $\Q_p$ and let $Y$ be a Shimura variety, over $K$, of PEL type and Iwahoric level, associated to a symplectic or a unitary group. We assume that $p > 2$ is a prime that is unramified in the PEL datum of $X$ and let $\mc W$ be the weight space associated to $Y$. We denote with $X$ a fixed smooth toroidal compactification of $Y$. We assume that the ordinary locus of the reduction of a certain integral model of $X$ modulo the maximal ideal of $\mc O_K$ is dense (see \cite{shimura} for a case without ordinary locus). Our main results are the following theorems.
\begin{teono}
Let $\chi \in \mc W$ be a $p$-adic character. There is a good notion of $\underline v$-overconvergent, $\underline w$-locally analytic modular forms over $X$, where $\underline v$ and $\underline w$ are tuples of positive rational numbers satisfying certain conditions. These modular forms are defined as sections of certain sheaves $\underline \omega_{\underline v, \underline w}^\chi$ that interpolate an analytic version of the classical algebraic sheaves $\underline \omega^{\underline k}$ defined for integral weights. We also have an analogous result for cuspforms. These spaces can be put in families over $\mc W$ and there is an action of an Hecke algebra $\m T$, that includes the completely continuous operator $\U$. If $F$ is a locally analytic overconvergent modular eigenform of integral weight and $\U$-slope sufficiently small with respect to the weight (see Theorem~\ref{thm: class} for a precise condition), then $F$ comes from a classical modular form.
\end{teono}
\begin{teono}
Let $\cusp^{\dagger\chi}_{\underline v,\underline w}$ be the space of cuspidal forms that are $\underline v$-overconvergent, $\underline w$-locally analytic and of weight $\chi$. Let $f \in \cusp^{\dagger\chi}_{\underline v,\underline w}$ be a cuspidal eigenform of finite slope for the $\U$-operator. Then there exists an affinoid $\mc U \subset \mc W$ that contains $\chi$ and such that the system of eigenvalues associated to $f$ can be deformed to a family of systems of eigenvalues appearing in $\cusp^{\dagger\mc U}_{\underline v,\underline w}$, where $\cusp^{\dagger\mc U}_{\underline v,\underline w}$ is the space of families of cuspforms parametrized by $\mc U$.

More precisely, there is a rigid space $\mc E_{\underline v, \underline w} \to \mc W \times \m A^{1,\rig}$ that satisfies the following properties.
\begin{enumerate}
 \item It is equidimensional of dimension $\dim(\mc W)$ and the map $\mc E_{\underline v, \underline w} \to \mc W$ is locally finite. The fiber of $\mc E_{\underline v, \underline w}$ above a point $\chi \in \mc W$ parametrizes systems of eigenvalues for the Hecke algebra $\m T$ appearing in $\cusp^{\dagger\chi}_{\underline v,\underline w}$ that are of finite slope for the $\U$-operator. If $x \in \mc E_{\underline v, \underline w}$, then the inverse of the $\U$-eigenvalue corresponding to $x$ is $\pi_2(x)$, where $\pi_2$ is the induced map $\pi_2 \colon \mc E_{\underline v, \underline w} \to \m A^{1,\rig}$. For various $\underline v$ and $\underline w$, these constructions are compatible. Letting $\underline v \to 0$ and $\underline w \to \infty$ we obtain the global eigenvariety $\mc E$.
 \item Let $f \in \cusp^{\dagger\chi}_{\underline v,\underline w}$ be a cuspidal eigenform of finite slope for the $\U$-operator and let $x_f$ be the point of $\mc E_{\underline v, \underline w}$ corresponding to $f$. Let us suppose that $\mc E_{\underline v, \underline w} \to \mc W$ is unramified at $x_f$. Then there exists an affinoid $\mc U \subset \mc W$ that contains $\chi$ and such that $f$ can be deformed to a family of finite slope eigenforms $F \in \cusp^{\dagger\mc U}_{\underline v,\underline w}$.
\end{enumerate}
\end{teono}
Here is a detailed description of this paper. We follow \cite{AIP}, that is our main reference.

In Section~\ref{sec: PEL data} we introduce the Shimura varieties $X$ we work with. These are (integral models of) Shimura varieties of PEL type. At the beginning, we do not assume that $p$ is unramified in the PEL datum, but we assume that the ordinary locus of (the reduction of) $X$ is dense. We define Hasse invariants and some strict neighborhoods $X(\underline v)$ of the ordinary locus. We also work with some Shimura varieties of deeper level at $p$, that are needed to define modular forms. In Section~\ref{sec: F} we define the sheaf $\mc F$, that is a more convenient integral model of the conormal sheaf $\underline \omega$ and it is crucial for the definition of the sheaves $\underline \omega^\chi$. We also introduce our weight space and we define the so called modular sheaf of any $p$-adic weight (modular forms will be sections of these sheaves). In Section~\ref{sec: mod forms} we introduce various spaces of modular forms. We do not have a Koecher principle for sections of our modular sheaves, so we find it convenient to work with the compactified Shimura variety. In particular we need to assume that $p$ is unramified in the PEL datum. Section~\ref{sec: hecke op} is devoted to the definition of Hecke operators, both outside $p$ and at $p$. In particular we define the $\U$-operator and we show that it is a completely continuous operator on the space of overconvergent modular forms. In Section~\ref{sec: heck var} we study the space of cuspidal forms and we construct the eigenvarieties. To use Buzzard's machinery we need to verify that the space of cuspforms is projective (see Definition~\ref{defi: proj}) and to achieve this goal we make some explicit computations very similar to those of \cite{AIP}. The main technical point is a vanishing result about the higher direct images, projecting from the toroidal to the minimal compactification, of the structural sheaf twisted by the ideal defining the boundary. This result has been essentially proved in full generality by Lan in \cite{lan_ram}. Knowing the projectivity of the space of cuspforms we can apply Buzzard's results and we obtain the eigenvariety. In Section~\ref{sec: class} we prove our classicity result. This is a `small slope eigenforms are classical' type theorem, and its proof splits in two parts. First of all we show that small slope locally analytic overconvergent eigenforms are overconvergent algebraic eigenforms. This a representation theoretic computation using the BGG resolution. Then we use the results of \cite{stefan} to prove classicity. The above theorems follow.
\subsection*{Open questions} \label{subsec: open}
There are at least two questions left open by this work.
\begin{itemize}
 \item Since Corollary~\ref{coro: fin} holds only for cuspidal forms, we cannot apply Buzzard machinery to produce eigenvarieties that parametrize system of eigenvalues associated to not necessarily cuspidal modular forms. For various applications, it would be useful to have eigenvarieties that work in general. We think that the cuspidality assumption is necessary to apply Buzzard's machinery in general, but we believe that our construction can be used in some non-cuspidal cases, adding certain conditions on the weight. This is the subject of the work in progress \cite{eigen}.
 \item In view of Theorem~\ref{teo: eigen}, part \eqref{en: unr}, it is natural to look for some conditions that ensure that the morphism $\mc E \to \mc W$ is unramified at a given $x \in \mc E$. A similar problem exists already in the Siegel and Hilbert cases, see \cite[Section~8.3]{AIP} and the references cited there for what is know in the $\GL_{2}$ and $\GSp_4$ cases.
\end{itemize}
\subsection*{Acknowledgments} \label{subsec: ackn}
I would like to thank Fabrizio Andreatta, Adrian Iovita, and Vincent Pilloni for their paper \cite{AIP} and for many useful discussions. I would also like to thank Kai-Wen Lan for answering a lot of questions concerning his works. I thank the anonymous referee for pointing out a problem in the proof of Proposition~\ref{prop: mod form well def} and several useful comments.
\begin{notation}
If $G$ is an abelian group and $p$ is a prime number, we set $G_p \colonequals G \otimes_{\Z} \Z_p$.

If $R$ is a commutative ring, there is an equivalence of categories between $\mat_n(R)-\mathbf{mod}$ and $R-\mathbf{mod}$. We will always realize this Morita's equivalence via $M \rightsquigarrow e_{1,1} \cdot M$, where $e_{1,1} \in \mat_n(R)$ is the diagonal matrix that has $1$ in the upper left corner and $0$ on all the other entries. In the case of a module over a product of matrix rings, we will realize Morita's equivalence via the product of the above functors.

We will write $\B_n(R) \subset \GL_n(R)$ for the Borel subgroup of $\GL_n(R)$ consisting of upper triangular matrices. We denote with $\U_n(R)$ the unipotent radical of $\B_n(R)$.

We will work with several objects that can have a $+$ or a $-$ as superscripts, or no superscripts at all. If $\star$ is any symbol, the notation $\star^\pm$ refers to any of $\star^+$, $\star^-$ or $\star$. No ambiguity should arise.
\end{notation}

\section{PEL type Shimura varieties} \label{sec: PEL data}
In this section we introduce the basic objects of our work. Our main reference for Shimura varieties of PEL type is \cite{lan}. We consider a particular case of the situation studied in Lan's work but, until Section~\ref{sec: heck var}, we slightly relax the assumption on $p$. One can check that the definitions and results we cite still make sense with our assumptions on $p$, so we freely cite \cite{lan}.

Let $B$ be a finite dimensional simple algebra over $\Q$, with center $F$. We let $^\ast \colon B \to B$ be a positive involution and we write $F_0$ for the subfield of $F$ fixed by $^\ast$.
\begin{ass} \label{ass: type}
We assume that we are in one of the following situations.
\begin{enumerate}
 \item[Case (A)] We have $[F:F_0]=2$. In this case $F$ is a totally imaginary extension of $F_0$ and $B \otimes_{F_0} \mathds{R} \cong \mat_n(\C)$ (the involution is $A \mapsto \bar A^t$).
 \item[Case (C)] We have $F=F_0$ and $B \otimes_{F_0} \mathds{R} \cong \mat_n(\mathds{R})$ with involution $A \mapsto A^t$.
\end{enumerate}
\end{ass}
Let $d$ be $[F_0 : \Q]$ and let $\tau_1,\ldots,\tau_d$ be the various embeddings $F_0 \hookrightarrow \mathds{R}$. In case (A), we choose once and for all a CM-type for $F$, i.e.\ we choose $\sigma_1,\ldots, \sigma_d$ embeddings $F \hookrightarrow \C$ such that $\sigma_{i|F_0} = \tau_i$. In particular, $\Hom(F,\C)= \set{\sigma_i, \bar \sigma_i}_i$.

We fix $\mc O_B$, an order of $B$ that is stable under $^\ast$. Let $(\Lambda, \langle \cdot, \cdot \rangle, h)$ be a PEL type $\mc O_B$-lattice in the sense of \cite[Definition~1.2.1.3]{lan}, with $\Lambda \neq 0$. We set $V \colonequals \Lambda \otimes_{\Z} \Q$. We obtain an algebraic group $G$ over $\Sp(\Z)$ as in \cite[1.2.1.6]{lan}. Thanks to our assumptions, $G_{\Q}$ is a reductive connected algebraic group over $\Q$.
\begin{ass}
We assume that the complex dimension of the symmetric space associated to $G$ is at least $2$ (so Koecher's principle holds).
\end{ass}
We decompose
\begin{gather} \label{eq: dec C}
V \otimes_{\Q} \C \cong V_{\C,1} \oplus V_{\C,2}
\end{gather}
in such a way that $h(z)$ acts on $V_{\C,1}$ via multiplication by $z$ and via multiplication by $\bar z$ on $V_{\C,2}$. We write $E$ for the reflex field, the finite extension of $\Q$ defined as the field of definition of the isomorphism class of the complex $B$-representation $V_{\C,1}$.

We let $p \neq 2$ be a prime number, fixed from now on. We assume there is an isomorphism, that we fix,
\begin{gather} \label{eq: dec int}
\mc O_{B,p} \cong \prod_{\mathfrak p | p} \mat_n(\mc O_{\mathfrak p}).
\end{gather}
where the product is over the prime ideals of $\mc O_F$ above $p$ and $\mc O_{\mathfrak p}$ is a finite extension of $\Z_p$. We choose a uniformizer element $\varpi_{\mathfrak p} \in \mc O_{\mathfrak p}$. We assume that $\mc O_{B,p}$ is a maximal order of $B_p$ and that the restriction of $\langle \cdot, \cdot \rangle$ to $\Lambda_p$ gives a perfect pairing with values in $\Z_p$. By \eqref{eq: dec int}, we have decompositions
\[
V_p \cong \prod_{\mathfrak p | p} V_{\mathfrak p} \mbox{ and } \Lambda_p \cong \prod_{\mathfrak p | p} \Lambda_{\mathfrak p}.
\]
Taking multiples by powers of $\varpi_{\mathfrak p}$ of $\Lambda_{\mathfrak p}$, we obtain a selfdual chain $\mc L_{\mathfrak p}$ of $\mat_n(\mc O_{\mathfrak p})$-lattices of $V_{\mathfrak p}$. The product of the $\mc L_{\mathfrak p}$ gives $\mc L_p$, a selfdual multichain of $\mc O_{B,p}$-lattices in $V_p$ (see \cite[Chapter~3]{rapoport_zink} for the definition of these notions). Let $K_p \subset G(\Q_p)$ be the stabilizer of $\mc L_p$. Then $K_p$ is a parahoric subgroup. We fix $\mc H \subset G({\widehat{\Z}}^p)$, a compact open subgroup that we assume to be \emph{neat} (see \cite[Definition~1.4.1.8]{lan}). We will denote with $N$ a positive integer not divisible by $p$ such that $\mc U^p(N) \subset \mc H$ (see \cite[Remark~1.2.1.9]{lan} for the definition of $\mc U^p(N)$).
\begin{rmk} \label{rmk: unr case}
We have imposed the condition that the multichain $\mc L_p$ comes from a single lattice $\Lambda_p$ in such a way that, if $B$ is unramified at $p$, we are in the situation of \cite{kottwitz} and \cite{lan}. In this case $G$ is unramified over $\Q_p$ and $K_p$ is an hyperspecial subgroup.
\end{rmk}
We fix once and for all embeddings $\overline \Q \hookrightarrow \C$ and $i_p \colon\overline \Q \hookrightarrow \overline \Q_p$. We denote with $\mc P$ the corresponding prime ideal of $\mc O_E$ above $p$ and we write $E_{\mc P}$ for the $\mc P$-adic completion of $E$. We are interested in the functor
\[
Y \colon \mbox{ locally noetherian } \mc O_{E_{\mc P}}-\mbox{schemes} \to \mathbf{set}
\]
that to $S$ associates the isomorphism classes of the following data:
\begin{enumerate}
 \item an abelian scheme $A/S$;
 \item a polarization $\lambda \colon A \to A^\vee$ of degree prime to $p$;
 \item an action of $\mc O_B$ on $A/S$ as in \cite[Definition~1.3.3.1]{lan};
 \item a $\mc H$-level structure in the sense of \cite[Definition~1.3.7.6]{lan}.
\end{enumerate}
We furthermore require the usual determinant condition of Kottwitz, see \cite[Definition~1.3.4.1]{lan}.
\begin{rmk} \label{rmk: is clas iso class}
We have defined $Y$ using isomorphism classes of abelian schemes, rather than isogeny classes as is done in \cite{rapoport_zink} and \cite{kottwitz}. By \cite[Proposition~1.4.3.4]{lan}, these two approaches are equivalent.
\end{rmk}
\begin{teono}[{\cite[\S~5]{kottwitz}, \cite[\S~6.9]{rapoport_zink}, and \cite[Theorem~1.4.1.11 and Corollary~1.4.1.12]{lan}}] The functor $Y$ is representable by a quasi-projective scheme over $\Sp(\mc O_{E_{\mc P}})$, denoted again by $Y$. If $B$ is unramified at $p$, then $Y$ is smooth over $\Sp(\mc O_{E_{\mc P}})$.
\end{teono}
\begin{ass} \label{ass: ord}
We assume that the ordinary locus of the reduction modulo $\mc P$ of $Y$ is Zariski dense.
\end{ass}
\begin{rmk} \label{rmk: ord}
If $B$ is unramified at $p$, by \cite[1.6.3]{ord_pel}, the above assumption is equivalent to the fact that $E_{\mc P}$ is isomorphic to $\Q_p$ and it is automatically satisfied in case (C).
\end{rmk}
Let $\tilde K$ be a number field such that the decomposition in \eqref{eq: dec C} is defined over $\tilde K$ and let $K$ be the completion of $\tilde K$ at the prime ideal above $p$ given by our fixed embedding $i_p \colon \bar \Q \hookrightarrow \bar \Q_p$. It is a finite extension of $\Q_p$ and we choose a uniformizer element $\varpi$. We freely enlarge $K$ without any comment. We have decompositions of $\mc O_B$-modules
\[
V \otimes_{\Q} K \cong V_1 \oplus V_2 \mbox{ and } \Lambda \otimes_{\Q_p} K \cong \Lambda_1 \oplus \Lambda_2,
\]
where $\Lambda_i$ is a $\mc O_K$-lattice in $V_i$. We base change $Y$ to $\mc O_K$, using the same notation. As shown in \cite{pappas_non_flat}, $Y$ can be not flat over $\mc O_K$. We are interested in admissible formal schemes that are integrally closed in its generic fiber. Starting with $Y$, we perform the following steps to obtain such a formal scheme:
\begin{itemize}
 \item let $\widetilde Y$ be the flat closure of $Y$ in $Y_K$;
 \item let $\widetilde{\mathfrak Y}$ be the $\varpi$-adic completion of $\widetilde Y$. It is an admissible formal scheme over $\Spf(\mc O_K)$;
 \item let $\mathfrak Y$ be the normalization of $\widetilde{\mathfrak Y}$ in its generic fiber.
\end{itemize}
In this way $\mathfrak Y$ is an admissible formal scheme and we have its generic fiber $\mathfrak Y^{\rig}$. We follow the notation introduced in \cite[\S~4.1]{AIP}, in particular $\Nadm$ is the category of admissible $\mc O_K$-algebras $R$ that are integrally closed in $R[1/p]$. We will freely use the fact that all our formal schemes have a nice moduli interpretation when restricted to objects of $\Nadm$ (see \cite[Proposition~5.2.1.1]{AIP}). In particular, if the canonical subgroup exists over $R[1/p]$ (see below), it automatically extends to $R$, see \cite[Proposition~4.1.3]{AIP}.
\subsection{The Hasse invariant and the canonical subgroup} \label{subsec: can sub} Let $(p) = \prod_{i=1}^k (\varpi_i)^{e_i}$ be the decomposition of $(p)$ in $\mc O_{F_0}$ and let $\mc O_i$ be the completion of $\mc O_{F_0}$ with respect to $(\varpi_i)$ (here $\varpi_i$ is a fixed uniformizer of $\mc O_i$). We have a decomposition $\Hom(F_0,\C_p) = \coprod_{i=1}^k D_i$, where $D_i$ is the set of the embeddings $F_0 \hookrightarrow \C_p$ coming from $(\varpi_i)$. We have $\mc O_{F_0,p} \cong \prod_{i=1}^k \mc O_i$. We set $d_i \colonequals [F_i:\Q_p]$, where $F_i\colonequals \Fr(\mc O_i)$, so we have $|D_i|=d_i$. We write $d_i=e_if_i$. From now on, we assume that $K$ is big enough to contain the image of all embeddings $F \hookrightarrow \C_p$. In this section $A$ will be an abelian scheme given by the moduli problem associated to $Y$. We assume that $A$ is defined over a finite extension of $\mc O_K$, so it comes from a rigid point of $\mathfrak Y^{\rig}$.
\subsection*{Case (A)}
Let $B$ be of type (A). For any $i=1,\ldots,d$, we have the $B \otimes_{F_0,\tau_i} \mathds{R} \cong \mat_n(\C)$-module $V \otimes_{F_0,\tau_i} \mathds{R}$. We can write $V \otimes_{F_0,\tau_i} \C \cong \C^n \otimes_{\C} W_i$ for an essentially unique $\C$-vector space $W_i$. Moreover, $W_i$ naturally inherits an hermitian form from $V$. We write $(a_i^+,a_i^-)$ for its signature. We have $a_i^+ + a_i^-=\frac{\dim_{\Q}(V)}{2nd}$ for all $i$.
\begin{ass} \label{ass: ord type A}
We assume that each $(\varpi_i)$ splits completely in $\mc O_F$, and we write $(\varpi^\pm_i)$ for the prime ideals of $\mc O_F$ above $(\varpi_i)$. Moreover, if $i_1,i_2 = 1,\ldots,d$ are such that $i_p \circ \sigma_{i_1}$ and $i_p \circ \sigma_{i_2}$ define the same $p$-adic valuation, we assume $a_{i_1}^+=a_{i_2}^+$.
\end{ass}
\begin{rmk}
If $p$ is unramified in $\mc O_B$ and each $(\varpi_i)$ splits in $\mc O_F$, then the above condition on the signature is equivalent to Assumption~\ref{ass: ord}.
\end{rmk}
\begin{rmk} \label{rmk: split not nec}
The assumption that each $(\varpi_i)$ splits in $\mc O_F$ is not necessary. If $(\varpi_i)$ is inert the theory is similar to case (C). We leave the details to the interested reader.
\end{rmk}
Using the obvious notation, we can rewrite the decomposition in \eqref{eq: dec int} as
\begin{gather} \label{eq: dec case A}
\mc O_{B,p} = \prod_{i=1}^k \left( \mat_n(\mc O_i^+) \oplus \mat_n(\mc O_i^-) \right ).
\end{gather}
We can assume that the left ideal of $\mc O_{B,p}$ generated by $\varpi_i^+$ corresponds to the left ideal generated by the $2k$ matrices $M_j^\pm$, where $M_i^+ = \diag(\varpi_i^+,\ldots,\varpi_i^+)$ and $M_j^\pm = 1$ otherwise. We have a decomposition
\begin{gather*}
A[p^\infty] = \prod_{i=1}^k \left ( A[((\varpi_i^+)^{e_i})^\infty] \oplus A[((\varpi_i^-)^{e_i})^\infty] \right ),
\end{gather*}
where $A[((\varpi_i^-)^{e_i})^\infty]$ is canonically identified with the Cartier dual of $A[((\varpi_i^+)^{e_i})^\infty]$. Using the canonical isomorphisms $\mc O_i \cong \mc O_i^+ \cong \mc O_i^-$, we will consider only $\mc O_i$.
\subsection*{Case (C)} Let $B$ be of type (C). Similarly to case (A), we can write the $B \otimes_{F,\tau_i} \mathds{R} \cong \mat_n(\mathds{R})$-module $V \otimes_{F,\tau_i} \mathds{R}$ as $\mathds{R}^n \otimes_{\mathds{R}} W_i$ for an essentially unique $\mathds{R}$-vector space $W_i$. We have $a_i \colonequals \dim_{\mathds{R}} W_i = \frac{\dim_{\Q}(V)}{2nd}$. We can rewrite the decomposition in \eqref{eq: dec int} as
\begin{gather} \label{eq: dec case C}
\mc O_{B,p} = \prod_{i=1}^k \mat_n(\mc O_i).
\end{gather}
We can assume that, under the isomorphism \eqref{eq: dec case C}, the left ideal of $\mc O_{B,p}$ generated by $\varpi_i$ corresponds to the left ideal generated by the $k$ matrices $M_j$, where $M_i = \diag(\varpi_i,\ldots,\varpi_i)$ and $M_j = 1$ otherwise. We have a decomposition
\begin{gather*}
A[p^\infty] = \prod_{i=1}^k A[(\varpi_i^{e_i})^\infty],
\end{gather*}
where $A[(\varpi_i^{e_i})^\infty]$ is endowed with a principal $\mat_n(\mc O_i)$-linear polarization.

If $G$ is a Barsotti-Tate group defined over a finite extension of $\Z_p$, we write, as in \cite{AIP}, $\Hdg(G) \in [0,1]$ for the \emph{truncated} valuation of any lift of the Hasse invariant of the special fiber of $G$ (note that $\Hdg(G)$ is denoted $\Ha(G)$ in \cite{fargues_can}). We have a function
\begin{gather*}
\Hdg = (\Hdg_i)_i \colon \mathfrak Y^{\rig} \to [0,1]^k \\
A \mapsto (\Hdg(G_i^\pm))_i
\end{gather*}
where $G_i^\pm \colonequals e_{1,1} \cdot A[((\varpi_i^\pm)^{e_i})^\infty]$ (in case (A), since $G_i^-$ is the Cartier dual of $G_i^+$, we have $\Hdg(G_i^+) = \Hdg(G_i^-)$, so there is no ambiguity in the notation $\Hdg(G_i^\pm)$). If $\underline v = (v_i)_i \in [0,1]^k$ we set
\[
\mathfrak Y(\underline v)^{\rig} \colonequals \set{x \in \mathfrak Y^{\rig} \mbox{ such that } \Hdg(x)_i \leq v_i \mbox{ for all } i} .
\]
The ordinary locus of $\mathfrak Y^{\rig}$ is $\mathfrak Y(0)^{\rig}$, it coincides with the tube of the ordinary locus of the special fiber of $\mathfrak Y$. It is not empty by Assumption~\ref{ass: ord}. If $\underline v \in \Q^k \cap [0,1]^k$, we have that $\mathfrak Y(\underline v)^{\rig}$ is a quasi-compact strict neighbourhood of $\mathfrak Y(0)^{\rig}$.

We are going to define, for all $\underline v \in [0,1]^k$, a canonical formal model $\mathfrak Y(\underline v)$ of $\mathfrak Y(\underline v)^{\rig}$, following the approach of \cite[Definition~III.2.11]{peter_tors}. Let $\underline \omega_i^\pm$ be conormal sheaf of $G_i^\pm$ (below $\underline \omega_i^\pm$ will have a slightly different meaning, but no confusion should arise). The Hasse invariant defines a section, denoted $\Ha_i^\pm$, of $\det(\underline \omega_i^\pm)^{\otimes p-1}$ on the reduction modulo $p$ of $\mathfrak Y$. For all $i$, there is a canonical isomorphism $\underline \omega_i^+ \cong \underline \omega_i^-$, and the two Hasse invariants $\Ha_i^+$ and $\Ha_i^-$ are identified under the corresponding isomorphism. For this reason, we will simply write $\underline \omega_i$ and $\Ha_i$.
\begin{defi} \label{defi: form model}
Let $\underline v = (v_i)_{i=1}^k$ and assume that, for all $i$, there is in $\mc O_K$ an element, denoted $p^{v_i}$, of valuation $v_i$. For all $j=1\ldots, k$, we define $\tilde{\mathfrak Y}(v_1,\ldots, v_j)$ by recursion as the functor sending any $p$-adically complete flat $\mc O_K$-algebra $S$ to the set of equivalence classes of pairs $(f,u)$, where:
\begin{itemize}
 \item $f \colon \Spf(S) \to \tilde{\mathfrak Y}(v_1,\ldots, v_{j-1})$ (if $j=1$, we set $\tilde{\mathfrak Y}(v_1,\ldots, v_{j-1}) \colonequals \mathfrak Y$);
 \item $u \in \Homol^0(\Spf(S), \det(\underline\omega_j)^{\otimes p-1})$ is a section such that, in $S/p$, we have the equality
\[
u \Ha_j(\bar f) = p^{v_j} \in S/p,
\]
where $\bar f$ is the reduction of $f$ modulo $p$ (to be precise we should first of all consider the pullback of $\underline \omega_i$ and $\Ha_i$ via the morphism $\tilde{\mathfrak Y}(v_1,\ldots, v_{j-1}) \to \mathfrak Y$).
\end{itemize}
Two pairs $(f,u)$ and $f',u'$ are equivalent if $f=f'$ and there is some $h \in S$ such that $u' = u(1+p^{1-v_j}h)$.
\end{defi}
By \cite[Lemma~III.2.13]{peter_tors}, we have that $\tilde{\mathfrak Y}(\underline v) \colonequals \tilde{\mathfrak Y}(v_1,\ldots, v_k)$ is representable by a formal scheme, flat over $\mc O_K$. Moreover, one has the usual local description of $\tilde{\mathfrak Y}(\underline v)$. We define $\mathfrak Y(\underline v)$ as the normalization of $\tilde{\mathfrak Y}(\underline v)$ in its generic fiber.
\begin{rmk} \label{rmk: form mod}
The point of the definition of $\mathfrak Y(\underline v)$ is that, using this approach, we do not need to worry about whether the various Hasse invariants lift to characteristic $0$ (and we do not choose any such lift).
\end{rmk}
\begin{notation} \label{not: epsilon}
For any integer $n \geq 0$, we write $\varepsilon_n=\frac{1}{2p^{n-1}}$ if $p\neq 3$ and $\varepsilon_n=\frac{1}{3^n}$ if $p=3$. Unless explicitly stated, in the sequel we will always assume that $v_i < \varepsilon_n$ for all $i$, where $n$ will be clear from the context.
\end{notation}
We fix an integer $n \geq 0$. If $A/R$ is an abelian scheme above $\mathfrak Y(\underline v)$, where $R \in \Nadm$, we have, by \cite[Théorème~6]{fargues_can} and \cite[Proposition~4.1.3]{AIP}, a canonical subgroup of $H_{i,n}^\pm \subset G_i^\pm[p^n]$.
\begin{rmk} \label{rmk: con sub perp}
In case (A) we have $H_{i,n}^- = (H_{i,n}^+)^\perp$, where the orthogonal is taken with respect to the perfect pairing given by duality.
\end{rmk}
Over $K$, we fix $\mc O_i$-linear compatible isomorphisms of étale group schemes
\[
\left (\mc O_i/(\varpi_i^\pm)^{e_in}\right)^{\Dual} \cong \mc O_i/(\varpi_i^\pm)^{e_in},
\]
where $(\cdot)^{\Dual}$ denotes Cartier duality. In particular we will assume that $K$ contains the necessary roots of unity.
\begin{lemma} \label{lemma: can sub free}
We have that $H_{i,n}^\pm$ has rank $p^{na_i^\pm d_i}$ and is stable under $\mc O_i$. Moreover, locally for the étale topology on $R_K$, it is isomorphic to $(\mc O_i/p^n)^{a_i^\pm}$. The same is true for $(H_{i,n}^\pm)^{\Dual}$.
\end{lemma}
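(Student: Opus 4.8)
\emph{Proof strategy.} The assertion is local on $R$ and compatible with base change, so the plan is first to reduce to the case where $R_K$ is an algebraically closed field. Since we are in characteristic $0$, $H_{i,n}^\pm$ then becomes a finite \emph{étale} group scheme, and it suffices to identify the finite $\mc O_i/p^n$-module $M\colonequals H_{i,n}^\pm(R_K)$ and its Cartier dual. The $\mc O_i$-stability is immediate from the functoriality of Fargues' canonical subgroup: on the special fibre it is characterised as $\ker(F^n)$, a condition visibly preserved by the $\mc O_i$-action on $G_i^\pm$, so $\mc O_i$ maps $H_{i,n}^\pm$ into itself. Two points then remain: the order of $M$ and the freeness of $M$ over $\mc O_i/p^n$.

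For the order, by \cite[Théorème~6]{fargues_can} together with \cite[Proposition~4.1.3]{AIP} the group $H_{i,n}^\pm$ is finite flat of order $p^{n\dim G_i^\pm}$, so it is enough to show $\dim G_i^\pm=a_i^\pm d_i$. This is precisely what the Kottwitz determinant condition (\cite[Definition~1.3.4.1]{lan}) records: after Morita's equivalence, $\dim G_i^\pm$ equals the rank of the $(\varpi_i^\pm)$-component $\underline\omega_{G_i^\pm}$ of $\underline\omega_A$, which is the sum over the $d_i$ embeddings of $F_i$ of the local multiplicities prescribed by the signature datum; these are all equal to $a_i^\pm$, by Assumption~\ref{ass: ord type A} in case (A) and by the very definition of $a_i$ in case (C). Since $\#(\mc O_i/p^n)=p^{d_in}$, the order $p^{n a_i^\pm d_i}$ already matches $\#(\mc O_i/p^n)^{a_i^\pm}$, so the real content is freeness.

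To obtain freeness I would reduce to the ordinary locus $\mathfrak Y(0)^{\rig}$, non-empty by Assumption~\ref{ass: ord}, where the statement is transparent: there $G_i^\pm$ is ordinary, its connected part $G_i^{\pm,\circ}$ is of multiplicative type, and Fargues' level-$n$ canonical subgroup is simply $G_i^{\pm,\circ}[p^n]$. The $p$-adic Tate module $T\colonequals T_p(G_i^{\pm,\circ})$ is then a finite free $\Z_p$-module of rank $\dim G_i^\pm$ carrying an $\mc O_i$-action; being $\Z_p$-torsion free it is $\mc O_i$-torsion free, hence \emph{free} over $\mc O_i$ because $\mc O_i$ is a complete discrete valuation ring and so a principal ideal domain. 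Its $\mc O_i$-rank is $\dim G_i^\pm/d_i=a_i^\pm$, whence $H_{i,n}^\pm=T/p^nT\cong(\mc O_i/p^n)^{a_i^\pm}$ over $\mathfrak Y(0)^{\rig}$. Now $H_{i,n}^\pm$ is finite étale over $\mathfrak Y(\underline v)^{\rig}$, so the isomorphism class of the $\mc O_i/p^n$-module attached to its geometric points is locally constant, and I would propagate the computation from the ordinary locus to all of $\mathfrak Y(\underline v)^{\rig}$, using that density of the ordinary locus in the special fibre (Assumption~\ref{ass: ord}) forces every connected component of the strict neighbourhood $\mathfrak Y(\underline v)^{\rig}$ to contain ordinary points. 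Alternatively one can stay over $\mathfrak Y(\underline v)$ and use the Hodge--Tate comparison between $(H_{i,n}^\pm)^{\Dual}$ and $\underline\omega_{G_i^\pm}\otimes(\mc O_{\mathfrak Y(\underline v)}/p^n)$ together with the local freeness of $\underline\omega_{G_i^\pm}$ of rank $a_i^\pm$ over $\mc O_i\otimes_{\Z_p}\mc O_{\mathfrak Y(\underline v)}$ (again from the determinant condition and Assumption~\ref{ass: ord type A}); the reduction to the ordinary locus has the advantage of avoiding any discussion of the integral model.

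For $(H_{i,n}^\pm)^{\Dual}$, Cartier duality preserves orders, so its rank is again $p^{n a_i^\pm d_i}$ and the $\mc O_i$-action dualises; freeness is then formal. Indeed, if $M$ is free of rank $a$ over $\mc O_i/p^n$, its Cartier dual is $\Hom_{\Z/p^n}(M,\Z/p^n)\cong\Hom_{\mc O_i/p^n}\bigl(M,\Hom_{\Z_p}(\mc O_i/p^n,\Z/p^n)\bigr)$, which is free of rank $a$ because $\Hom_{\Z_p}(\mc O_i/p^n,\Z/p^n)$ is free of rank one over $\mc O_i/p^n$ ($\mc O_i$ is Gorenstein, its inverse different being an invertible ideal). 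In case (A) one may instead invoke Remark~\ref{rmk: con sub perp} to identify $(H_{i,n}^+)^{\Dual}$ with $G_i^-[p^n]/H_{i,n}^-$, which over the ordinary locus is the $p^n$-torsion of the étale quotient of $G_i^-$ and is therefore free of rank $a_i^+$; in case (C) the principal polarisation gives $(G_i)^{\Dual}\cong G_i$. The step I expect to be delicate is precisely the passage from the ordinary locus to the whole of $\mathfrak Y(\underline v)^{\rig}$: at a prime $\varpi_i$ ramified over $p$ the ring $\mc O_i/p^n$ is not a field, so freeness is genuine content, and away from the ordinary locus the canonical subgroup carries no evident Tate module --- which is exactly why one either spreads out from $\mathfrak Y(0)^{\rig}$ (this is where Assumption~\ref{ass: ord} is really used) or appeals to the comparison with $\underline\omega_{G_i^\pm}$.
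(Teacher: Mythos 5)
Your proof of $\mc O_i$-stability and of the rank is essentially the paper's (\cite[Corollaire~10]{farg_harder} for stability --- the heuristic you give about $\ker(F^n)$ is morally what that reference establishes --- and \cite[Théorème~6]{fargues_can} for the rank). The freeness argument, however, is genuinely different, and it is precisely here that the paper does something else: it stays at a single geometric point and uses Fargues' degree function. Writing $\deg(H_{i,n}^\pm)=na_i^\pm d_i-\frac{p^n-1}{p-1}\Ha(G_i^\pm[p^n])$ (from \cite[Théorème~6]{fargues_can}) and computing $\deg(G_i^\pm[(\varpi_i^\pm)^{e_in-1}])=(e_in-1)a_i^\pm f_i$ via the isogeny $\varpi_i^\pm$, the hypothesis $v_i<\varepsilon_n$ gives $\deg(G_i^\pm[(\varpi_i^\pm)^{e_in-1}])<\deg(H_{i,n}^\pm)$, so \cite[Lemme~4]{farg_harder} shows the canonical subgroup is not contained in $G_i^\pm[(\varpi_i^\pm)^{e_in-1}]$; combined with the generic description $H_{i,n}^\pm\cong(\Z/p^n\Z)^{a_i^\pm d_i}$ this is how the paper pins down the $\mc O_i$-module type. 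That argument is purely local at the given $A/R$ and needs no global geometry at all.

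Your route via the ordinary locus, by contrast, rests on a step that is not available as stated. You spread out from $\mathfrak Y(0)^{\rig}$ using local constancy of the étale sheaf $H_{i,n}^\pm$, which requires every connected component of $\mathfrak Y(\underline v)^{\rig}$ to meet the ordinary locus. But Assumption~\ref{ass: ord} only says the ordinary locus is dense in the \emph{special fibre} of $Y$; nothing a priori prevents $\mathfrak Y(\underline v)^{\rig}$ from having a connected component on which some $\Hdg_i$ is bounded away from $0$. To repair this you would need an irreducibility or connectedness statement for the relevant rigid space --- global information the lemma has no business depending on --- and, worse, the lemma is stated for an arbitrary $R\in\Nadm$ mapping to $\mathfrak Y(\underline v)$, not for an open of the moduli space, so you would further have to connect $\Spf R$ to an ordinary point inside $\mathfrak Y(\underline v)$. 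Your alternative via the Hodge--Tate comparison is closer in spirit to where the paper is heading, but watch the circularity: the integral comparison you want is Proposition~\ref{prop: F loc free}, and the construction of the sheaf $\mc F$ there already takes Lemma~\ref{lemma: can sub free} as input; the crude Hodge--Tate map on its own is only an isomorphism up to a controlled $p$-power, which is exactly the defect $\mc F$ is built to absorb.
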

\begin{proof}
The statement about the rank follows from \cite[Théorème~6]{fargues_can} and, by \cite[Corollaire~10]{farg_harder}, we have that $H_{i,n}^\pm$ is $\mc O_i$-stable. Again \cite[Théorème~6]{fargues_can} implies that $H_{i,n}^\pm$ is, locally for the étale topology on $R_K$, isomorphic to $(\Z/p^n\Z)^{a_i^\pm d_i}$, so we can show that $H_{i,n}^\pm$ is not killed by $(\varpi_i^\pm)^{e_in-1}$. The dimension of $G_i^\pm$ is $a_i^\pm d_i$, so we have $\deg(G_i^\pm [p^n])=na_i^\pm d_i$ (see \cite[\S~3]{farg_harder} for details about the degree). Moreover, multiplication by $\varpi_i^\pm$ on $G_i^\pm$ is an isogeny, so, for all $s$ we have $\deg(G_i^\pm[(\varpi_i^\pm)^s]) = \val(\det( (\varpi_i^\pm)^{s,\ast})) = s \val(\det((\varpi_i^\pm)^\ast))$, where $(\varpi_i^\pm)^{s,\ast} \colon \underline \omega_{G_i^\pm} \to \underline \omega_{G_i^\pm}$ is the pullback. In particular we have $\deg(G_i^\pm[(\varpi_i^\pm)^{e_in-1}])=(e_in-1)a_i^\pm f_i$. By \cite[Théorème~6]{fargues_can}, we have $\deg(H_{i,n}^\pm)=na_i^\pm d_i-\frac{p^n -1}{p-1}\Ha(G_i^\pm[p^n])$, so we see that $\deg(G_i^\pm[(\varpi_i^\pm)^{e_in-1}]) < \deg(H_{i,n}^\pm)$ and we conclude by \cite[Lemme~4]{farg_harder}.
\end{proof}
\begin{notation}
We consider the algebraic group $\GL^{\mc O}$ over $\Z_p$ defined, in case (A) and (C) respectively, by
\[
\GL^{\mc O} \colonequals \prod_{i=1}^k \Res_{\mc O_i/\Z_p}(\GL_{a_i^+} \times \GL_{a_i^-}) \mbox{ and } \GL^{\mc O} \colonequals \prod_{i=1}^k \Res_{\mc O_i/\Z_p}\GL_{a_i}.
\]
We also have the subgroup $\T^{\mc O}$ defined by
\[
\T^{\mc O} \colonequals \prod_{i=1}^k \Res_{\mc O_i/\Z_p}(\m G^{a_i^+}_{\operatorname{m}} \times \m G^{a_i^-}_{\operatorname{m}}) \mbox{ and } \T^{\mc O} \colonequals \prod_{i=1}^k \Res_{\mc O_i/\Z_p}\m G^{a_i}_{\operatorname{m}}.
\]
Note that, over $K$, we have that $\T^{\mc O}$ is a split torus. We consider the Borel subgroup $\B^{\mc O}$ given by the `upper triangular matrices', with unipotent radical $\U^{\mc O}$.
\end{notation}
We now introduce the Shimura varieties we need. We write $\mathfrak Y(p^n)(\underline v)^{\rig} \to \mathfrak Y(\underline v)^{\rig}$ for the finite étale covering that, in case (A), parametrizes $\mc O_i$-linear trivializations $H_{i,n}^+ \oplus H_{i,n}^- \cong (\mc O_i/(\varpi_i^+)^{e_in})^{a_i^+} \oplus (\mc O_i/(\varpi_i^-)^{e_in})^{a_i^-}$ (note that everything is in characteristic $0$ here); in case (C) it parametrizes $\mc O_i$-linear trivializations $H_{i,n} \cong (\mc O_i/\varpi_i^{e_in})^{a_i}$. There is an action of $\GL^{\mc O}(\Z_p)$ on $\mathfrak Y(p^n)(\underline v)^{\rig}$. Let $\mathfrak Y_{\Iw}(p^n)(\underline v)^{\rig}$ be the quotient of $\mathfrak Y(p^n)(\underline v)^{\rig}$ with respect to $\B^{\mc O}(\Z_p)$. Finally, we let $\mathfrak Y_{\Iwt}(p^n)(\underline v)^{\rig}$ be the quotient of $\mathfrak Y(p^n)(\underline v)^{\rig}$ with respect to $\U^{\mc O}(\Z_p)$. Taking the normalization of $\mathfrak Y(\underline v)$ we obtain the tower of formal schemes
\[
\mathfrak Y(p^n)(\underline v) \to \mathfrak Y_{\Iwt}(p^n)(\underline v) \to \mathfrak Y_{\Iw}(p^n)(\underline v) \to \mathfrak Y(\underline v).
\]
Any of these formal schemes has a reasonable moduli space interpretation.

\section{\texorpdfstring{The sheaf $\mc F$ and modular sheaves}{The sheaf F and modular sheaves}} \label{sec: F}
\subsection{The weight space} \label{subsec: weight space} 
Our weight space is the rigid analytic variety $\mc W$ associated to the completed group algebra $\mc O_K \llbracket \T^{\mc O}(\Z_p) \rrbracket$. It satisfies
\[
\mc W(A)= \Hom_{\cont}(\T^{\mc O}(\Z_p), A^\ast)
\]
for any affinoid $K$-algebra $A$.

Accordingly to the decomposition of $\T^{\mc O}$, we have $\mc W = \prod_i (\mc W_i^+ \times \mc W_i^-)$ in case (A) and $\mc W = \prod_i \mc W_i$ in case (C). In particular, we can write $\chi = (\chi_i^\pm)_i$ for all $\chi \in \mc W(\C_p)$. Let $w_i^\pm > 0$ be a rational number such that there is an element $p^{w_i^\pm} \in \mc O_K$ of valuation $w_i^\pm$. We say that $\chi_i^\pm \in \mc W_i^\pm (\C_p)$ is \emph{$w_i^\pm$-locally analytic} if $\chi_i^\pm$ extends to an analytic character
\[
\chi_i^\pm \colon (\mc O_i^\ast(1+p^{w_i^\pm}\mc O_{\C_p}))^{a_i^\pm} \to \C_p^\ast.
\]
If $\underline w = (w_i^\pm)_i$ and $\chi = (\chi_i^\pm)_i \mc W(\C_p)$, we say that $\chi$ is \emph{$\underline w$-locally analytic} is each $\chi_i^\pm$ is $w_i^\pm$-locally analytic. Any $\chi \in \mc W(\C_p)$ is $\underline w$-locally analytic for some $\underline w$. Moreover, let $\mc U \subset \mc W$ be an affinoid associated to a $\C_p$-algebra $A$ and let $\chi_{\mc U}^{\un} = (\chi_{\mc U,i}^{\un,\pm})_i$ be its universal character. Then there is a tuple of positive rational numbers $\underline w = (w_i^\pm)_i$ such each $\chi_{\mc U,i}^{\un,\pm}$  extends to an analytic character
\[
\chi_{\mc U,i}^{\un,\pm} \colon (\mc O_i^\ast(1+p^{w_i^\pm}\mc O_{\C_p}))^{a_i^\pm} \to A^\ast.
\]
We say in this case that $\chi_{\mc U}^{\un}$ is $\underline w$-locally analytic.

Fix an integer $n \geq 1$. We have the subspace $\widetilde{\mc W}_i^\pm(n)$ given by those $\chi_i^\pm \in \mc W_i^\pm(\C_p)$ that satisfy $\chi_i^\pm(1+p^n \mc O_i) \subset 1+p\mc O_{\C_p}$. We define $\mc W_i^\pm(n)$ as the subspace of $\widetilde{\mc W}_i^\pm(n)$ given by the characters $\chi_i^\pm$ such that their restriction to $1+p^n\mc O_i$ is obtained from a $\Z_p$-linear morphism $p^n \mc O_i \to p\mc O_{\C_p}$ taking composition with the $p$-adic logarithm and with the $p$-adic exponential. If $w_i^\pm \geq 1$ is a rational number, we set $\mc W_i^\pm(w_i^\pm) \colonequals \mc W_i^\pm([w_i^\pm])$, where $[w_i^\pm]$ denotes the integer part of $w_i^\pm$.

Let $\underline w = (w_i^\pm)_i$ be a tuple of rational numbers. We set $\mc W(\underline w) \colonequals \prod_i (\mc W_i^+(w_i^+) \times \mc W_i^-(w_i^-))$ or $\mc W(\underline w) \colonequals \prod_i \mc W_i(w_i)$. By construction we have the following
\begin{prop} \label{prop: cov we sp}
Each $\mc W(\underline w)$ is affinoid and $\set{\mc W(\underline w)}_{\underline w}$ is an admissible covering of $\mc W$. Moreover, if $\chi \in \mc W(\underline w)(K)$, then $\chi$ is $\underline w$-analytic.
\end{prop}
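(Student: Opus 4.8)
\emph{Sketch of proof.} All three assertions are compatible with the product decompositions $\mc W = \prod_i(\mc W_i^+\times\mc W_i^-)$ (resp.\ $\prod_i\mc W_i$) and $\mc W(\underline w)=\prod_i(\mc W_i^+(w_i^+)\times\mc W_i^-(w_i^-))$ (resp.\ $\prod_i\mc W_i(w_i)$): a finite product of affinoids is affinoid, a finite product of nested admissible affinoid coverings is again one, and $\underline w$-analyticity is tested factor by factor. So I would fix one factor, writing it as $\mc W'$ with $\mc W'(A)=\Hom_{\cont}((\mc O_i^\ast)^a,A^\ast)$, $a=a_i^\pm$, together with its subspaces $\mc W'(n)$ for integers $n\geq 1$ (recall $\mc W'(w)=\mc W'([w])$ for rational $w\geq 1$).

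The first step is to make $\mc W'$ explicit. Because $p>2$, for every $n\geq 1$ the group $1+p^n\mc O_i$ is torsion-free and $\log$, $\exp$ are mutually inverse topological isomorphisms between $1+p^n\mc O_i$ and $p^n\mc O_i$, and between $1+p^n\mc O_{\C_p}$ and $p^n\mc O_{\C_p}$; in particular $\mc O_i^\ast\cong\Delta_i\times\Z_p^{d_i}$ with $\Delta_i$ the finite group of roots of unity in $\mc O_i$. After enlarging $K$ so that it contains the $|\Delta_i|$-th roots of unity, $\mc W'$ becomes a finite disjoint union --- indexed by $\Hom(\Delta_i^a,K^\ast)$ --- of $N$-dimensional open polydiscs, $N=ad_i$, each with coordinates $|T_1|<1,\dots,|T_N|<1$, where $T_j=\chi(\gamma_j)-1$ for a fixed topological basis $\gamma_1,\dots,\gamma_N$ of the free part.

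The second step identifies $\mc W'(n)$ inside this picture. The condition $\chi(1+p^n\mc O_i)\subset 1+p\mc O_{\C_p}$ defining $\widetilde{\mc W}'(n)$ is a ``bounded'', hence closed, condition, and by the previous step any continuous character of $(1+p^n\mc O_i)^a$ into $1+p\mc O_{\C_p}$ is automatically of the form $\exp\circ\ell\circ\log$ with $\ell$ a $\Z_p$-linear map $(p^n\mc O_i)^a\to p\mc O_{\C_p}$ (a continuous additive map out of a free $\Z_p$-module into a torsion-free target is $\Z_p$-linear). Unwinding this --- the ramification of $\mc O_i/\Z_p$ only changes the constants, not the shape of the answer --- one finds that $\mc W'(n)$ is a finite disjoint union of closed polydiscs, cut out in the components of $\mc W'$ by inequalities $v(T_j)\geq\delta_n$ (after recentering by a root of unity) with $\delta_n>0$ an explicit rational and $\delta_n\downarrow 0$ as $n\to\infty$. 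Hence each $\mc W'(n)$ is affinoid; $\mc W'(n)\subset\mc W'(n+1)$, since log-linearity on $1+p^n\mc O_i$ is stronger than on $1+p^{n+1}\mc O_i$; and $\bigcup_n\mc W'(n)=\mc W'$, because any $\C_p$-point $\chi$ satisfies $\chi((1+p^n\mc O_i)^a)\subset 1+p\mc O_{\C_p}$ for $n$ large by continuity, and is then log-linear there by the remark above. Thus $\{\mc W'(n)\}_n$ is the standard increasing admissible affinoid exhaustion of the open polydiscs making up $\mc W'$; reassembling the factors yields the first two assertions.

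For the last assertion, if $\chi=(\chi_i^\pm)_i\in\mc W(\underline w)(K)$ then each $\chi_i^\pm\in\mc W_i^\pm(m)(K)$ with $m=[w_i^\pm]\leq w_i^\pm$, so $\chi_i^\pm$ equals $\exp\circ\ell\circ\log$ on $1+p^m\mc O_i$; this formula extends $\chi_i^\pm$ analytically to $(\mc O_i^\ast(1+p^m\mc O_{\C_p}))^{a_i^\pm}\supseteq(\mc O_i^\ast(1+p^{w_i^\pm}\mc O_{\C_p}))^{a_i^\pm}$ (the finite quotient $\mc O_i^\ast/(1+p^m\mc O_i)$ contributing only locally constant data), so $\chi_i^\pm$ is $w_i^\pm$-locally analytic and $\chi$ is $\underline w$-analytic. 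The only genuinely delicate point is the explicit comparison in the second step between the condition defining $\mc W'(n)$ and an honest inequality on the coordinates $T_j$: one must keep track of how $1+p^n\mc O_i$ sits in the free part $\Z_p^N$ of $(\mc O_i^\ast)^a$ through the logarithm when $\mc O_i/\Z_p$ is ramified, and check that the resulting radii are uniform in $j$ and tend to $1^-$. Everything else is formal manipulation with $\log$, $\exp$, and fibre products of rigid spaces.
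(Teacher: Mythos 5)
The paper offers no argument at all for this proposition — it merely says ``By construction we have the following'' — so what you have written is a fill-in rather than a rederivation, and it should be judged on those terms.

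Your strategy is the natural one and, I believe, the intended one: decompose $\mc W = \prod_i(\mc W_i^+\times\mc W_i^-)$ (resp.\ $\prod_i\mc W_i$), reduce to a single factor $\mc W'=\Hom_{\cont}((\mc O_i^\ast)^a,-)$, use $\mc O_i^\ast\cong\Delta_i\times\Z_p^{d_i}$ to write $\mc W'$ as a finite disjoint union of open unit polydiscs, and show that each $\mc W'(n)$ is the corresponding union of closed polydiscs, which form the standard increasing affinoid exhaustion. Your observation that $\widetilde{\mc W}'(n)(\C_p)=\mc W'(n)(\C_p)$ — i.e.\ that a continuous additive map from the free $\Z_p$-module $\log(1+p^n\mc O_i)$ to the complete torsion-free $\Z_p$-module $p\mc O_{\C_p}$ is automatically $\Z_p$-linear — is correct, and it is worth saying since the paper draws a distinction it never uses at the level of $\C_p$-points. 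The third assertion (the implication $\chi\in\mc W(\underline w)(K)\Rightarrow\chi$ is $\underline w$-analytic) is, as you note, essentially immediate from the definition of $\mc W_i^\pm(n)$ once one observes $[w_i^\pm]\leq w_i^\pm$ and restricts the $\exp\circ\ell\circ\log$ extension.

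The one place where your sketch is genuinely incomplete is the one you flag yourself: the identification, in coordinates $T_j=\chi(\gamma_j)-1$, of $\mc W'(n)$ with a closed polydisc $\{v(T_j)\geq\delta_n\}$, and the verification that $\delta_n\downarrow 0$. For a single factor this reduces to choosing topological generators $\gamma_j$ of the free part of $\mc O_i^\ast$ compatibly with the filtration by $1+p^n\mc O_i$ (so that $1+p^n\mc O_i$ becomes a coordinate-subgroup, e.g.\ $\langle\gamma_j^{p^{m(n)}}\rangle_j$ with $m(n)$ depending only on $n$ and the ramification), after which the condition is indeed coordinate-wise and standard estimates on $(1+T)^{p^m}-1$ give the radii. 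This is the same computation as in \cite[\S 2.2]{AIP}, to which the paper implicitly defers, so deferring to it is legitimate — but since it is the entire content of the ``affinoid'' and ``admissible covering'' claims, a complete write-up would have to carry it out, or at least cite \cite[\S 2.2]{AIP} explicitly rather than rederive the reduction. Everything else in your sketch (products of affinoids are affinoid; the nested affinoid exhaustions of the factors give an admissible covering of the product; the last clause) is correct as stated.
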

If $\chi \in X^\ast(\T^{\mc O})$ is a character of $\T^{\mc O}$, we define $\chi' \colonequals -w_0 \chi$, where $w_0$ is the longest element of the Weyl group of $\GL^{\mc O}$ with respect to $\T^{\mc O}$. For any $\underline w$, the map $\chi \mapsto \chi'$ extends to an involution of $\mc W(\underline w)$, denoted in the same way.
\subsection{\texorpdfstring{The sheaf $\mc F$}{The sheaf F}} \label{subsec: sheaf F} Let $R$ be in $\Nadm$. Suppose we are given a morphism $\Spf(R) \to \mathfrak Y(p^n)(\underline v)$, so we have an abelian scheme $A \to \Sp(R)$. We write $e \colon \Sp(R) \to A$ for the zero section. We consider the sheaf $e^\ast \Omega^1_{A / \Sp(R)}$. It is a $\mc O_B \otimes_{\Z} R$-module and a locally free sheaf of $\mc O_{\Sp(R)}$-modules of rank $\dim_{\Q}(V)/2$. By Morita's equivalence, $e^\ast \Omega^1_{A / \Sp(R)}$ corresponds to a sheaf $\underline \omega$ and we can write, in case (A) and (C) respectively,
\[
\underline \omega = \bigoplus_{i=1}^k (\underline \omega_i^+ \oplus \underline \omega_i^-) \mbox{ and } \underline \omega = \bigoplus_{i=1}^k \underline \omega_i.
\]
Let $i$ and $n$ be fixed. The map (see \cite[\S~4.2]{AIP})
\[
\HT_{(H_{i,n}^\pm)^{\Dual}} \colon (H_{i,n}^\pm)^{\Dual}(R_K) \to \underline \omega_{H_{i,n}^\pm}
\]
respects the action of $\mc O_i$ by functoriality. We define $\mc F_i^\pm(R)$ as the sub $\mc O_i \otimes_{\Z_p} R$-module of $\underline \omega_i^\pm$ generated by the inverse image of $\HT_{(H_{i,n}^\pm)^{\Dual}}(R_K)$ under the natural map, given by pullback, $\underline \omega_i^\pm \to \underline \omega_{H_{i,n}^\pm}$. We have that $\mc F_i^\pm$ does not depend on $n$.
\begin{prop} \label{prop: F loc free}
The sheaf $\mc F_i^\pm \subset \underline \omega_i^\pm$ is a locally free sheaf of $\mc O_i \otimes_{\Z} \mc O_{\Spf(R)}$-modules that contains $p^{\frac{v_i}{p-1}}\underline \omega_i^\pm$. If $w_i^\pm \in ]0,n-v_i\frac{p^n}{p-1}]$ then we have a natural map $\HT_{i,w_i^\pm}^\pm \colon (H_{i,n}^\pm)^{\Dual} (R_K) \to \mc F_i^\pm(R) \otimes_R R_{w_i^\pm}$ such that the induced map
\begin{gather} \label{eq: iso can sub}
(H_{i,n}^\pm)^{\Dual}(R_K) \otimes_{\Z_p} R \to \mc F_i^\pm(R) \otimes_R R_{w_i^\pm}
\end{gather}
is an isomorphism of $\mc O_i \otimes_{\Z_p} R$-modules.
\end{prop}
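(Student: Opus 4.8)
The plan is to deduce everything from the $\mc O_i$-equivariant form of Fargues' estimates on the Hodge–Tate map of the canonical subgroup. The assertion is local, so I would first work Zariski-locally on $\Spf(R)$ and assume $\underline\omega_i^\pm$ is free of rank $a_i^\pm$ over $\mc O_i\otimes_{\Z_p}R$; moreover, since $R\in\Nadm$, working étale-locally on $R_K$ is harmless for a statement of this kind (cf.\ \cite[Proposition~4.1.3]{AIP} and the normality of $R$), so by Lemma~\ref{lemma: can sub free} I may also assume $(H_{i,n}^\pm)^{\Dual}$ is the constant group $(\mc O_i/p^n)^{a_i^\pm}$, so that the linearised source $(H_{i,n}^\pm)^{\Dual}(R_K)\otimes_{\Z_p}R$ of the Hodge–Tate map is the free $(\mc O_i\otimes_{\Z_p}R)/p^n$-module of rank $a_i^\pm$. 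All formal schemes in sight have the expected moduli interpretation over $\Nadm$, so nothing is lost.

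The two inputs I would then invoke, both from \cite{fargues_can} and \cite{farg_harder}, are: (i) the natural surjection $\phi\colon\underline\omega_i^\pm\to\underline\omega_{H_{i,n}^\pm}$ becomes an isomorphism after tensoring with $R_w$ for every $w\le n-v_i\frac{p^n}{p-1}$ — this follows from the degree computation $\deg(H_{i,n}^\pm)=na_i^\pm d_i-\frac{p^n-1}{p-1}\Ha(G_i^\pm[p^n])$ together with $\Hdg(G_i^\pm)\le v_i$, exactly as in the proof of Lemma~\ref{lemma: can sub free}, and it is $\mc O_i$-linear by \cite[Corollaire~10]{farg_harder}; and (ii) the cokernel of the linearised map $\HT^{\mathrm{lin}}\colon(H_{i,n}^\pm)^{\Dual}(R_K)\otimes_{\Z_p}R\to\underline\omega_{H_{i,n}^\pm}$ is killed by $p^{v_i/(p-1)}$. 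Granting these, the containment assertion is immediate: since $0=\HT(0)$ lies in the image of the Hodge–Tate map, $\mc F_i^\pm$ contains by construction the whole kernel $\mc K\colonequals\ker\phi$, which by (i) satisfies $\mc K\subseteq p^{\,n-v_i p^n/(p-1)}\underline\omega_i^\pm$; and $\mc F_i^\pm/\mc K$ is exactly the image of $\HT^{\mathrm{lin}}$ inside $\underline\omega_{H_{i,n}^\pm}$, so (ii) gives $p^{v_i/(p-1)}\underline\omega_{H_{i,n}^\pm}\subseteq\mc F_i^\pm/\mc K$ and hence $p^{v_i/(p-1)}\underline\omega_i^\pm\subseteq\mc F_i^\pm$.

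For local freeness and the isomorphism \eqref{eq: iso can sub} I would reduce modulo $p^{w_i^\pm}$, applying $\cdot\otimes_R R_{w_i^\pm}$ with conventions as in \cite[\S~4.2]{AIP}. Since $0<w_i^\pm\le n-v_i p^n/(p-1)$, input (i) identifies $\underline\omega_i^\pm\otimes_R R_{w_i^\pm}$ with $\underline\omega_{H_{i,n}^\pm}\otimes_R R_{w_i^\pm}$ and forces $\mc K\subseteq p^{w_i^\pm}\underline\omega_i^\pm$, so $\mc F_i^\pm\otimes_R R_{w_i^\pm}$ becomes the image of $\HT^{\mathrm{lin}}$; the precise form of Fargues' estimate — this is exactly where the bound on $w_i^\pm$ is used — then shows that $(H_{i,n}^\pm)^{\Dual}(R_K)\otimes_{\Z_p}R\to\mc F_i^\pm\otimes_R R_{w_i^\pm}$ is an isomorphism, which produces $\HT_{i,w_i^\pm}^\pm$ and \eqref{eq: iso can sub}. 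In particular $\mc F_i^\pm\otimes_R R_{w_i^\pm}$ is generated by $a_i^\pm$ elements over $\mc O_i\otimes_{\Z_p}R$; lifting such a generating system and invoking Krull's intersection theorem over the $p$-adically complete Noetherian ring $\mc O_i\otimes_{\Z_p}R$ (in which $p$ is topologically nilpotent) yields a surjection $(\mc O_i\otimes_{\Z_p}R)^{a_i^\pm}\twoheadrightarrow\mc F_i^\pm$, which after inverting $p$ is a surjection of finite projective modules of equal rank $a_i^\pm$ — because $\mc F_i^\pm[1/p]=\underline\omega_i^\pm[1/p]$ by the containment just proved — hence an isomorphism; its kernel is then $p$-power torsion, so zero since $R$ is $p$-torsion free, and $\mc F_i^\pm$ is locally free of rank $a_i^\pm$. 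The step I expect to require the most care is establishing inputs (i) and (ii) in the present setting: one must check that Fargues' estimates survive Morita's equivalence and the decomposition $A[p^\infty]=\prod_i(\cdots)$ and genuinely respect each $\mc O_i$-action; everything downstream is then a formal consequence, and this part is the PEL analogue of the computations in \cite[\S~4]{AIP}.
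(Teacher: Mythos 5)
Your argument is correct and is essentially the paper's proof unpacked: the paper simply cites Lemma~\ref{lemma: can sub free} together with \cite[Proposition~4.3.1]{AIP}, and what you have written is precisely the Fargues-type computation underlying that reference (the degree estimate forcing $\ker(\underline\omega_i^\pm\to\underline\omega_{H_{i,n}^\pm})\subseteq p^{n-v_ip^n/(p-1)}\underline\omega_i^\pm$, the bound $p^{v_i/(p-1)}$ on the cokernel of the linearised Hodge--Tate map, and the Nakayama/torsion-freeness argument for local freeness), adapted to keep track of the $\mc O_i$-action via Morita's equivalence exactly as Lemma~\ref{lemma: can sub free} permits. No substantive divergence to report.
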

\begin{proof}
Taking into account Lemma~\ref{lemma: can sub free}, the proof is similar to the one of \cite[Proposition~4.3.1]{AIP}.
\end{proof}
We define
\[
\mc F \colonequals \bigoplus_{i=1}^k (\mc F_i^+ \oplus \mc F_i^-) \mbox{ or } \mc F \colonequals \bigoplus_{i=1}^k \mc F_i,
\]
by Morita's equivalence and Proposition~\ref{prop: F loc free}, it corresponds to a locally free sheaf of $\mc O_B \otimes_{\Z} \mc O_{\Sp(R)}$-modules contained in $e^\ast \Omega^1_{A/\Sp(R)}$. Moreover, this inclusion becomes an isomorphism if we invert $p$.
\subsection{Modular sheaves} \label{subsec: mod sheaves}
Let $n\geq 1$ be an integer and let $w_i^\pm \in ]0, n-v_i\frac{p^n}{p-1}]$ be a rational number. We begin with case (A). For each $i$, there are formal schemes $\Iwtform_{i,w_i^\pm}^\pm \to \mathfrak Y(p^n)(\underline v)$ defined as follows. Let $R$ be in $\Nadm$ and suppose that $\mc F_i^\pm(R)$ is free. The $R$-points of $\Iwtform_{i,w_i^\pm}^\pm$ correspond naturally to the following data:
\begin{itemize}
 \item an $R$-point of $\mathfrak Y(p^n)(\underline v)$,
 \item a filtration
\[
\Fil_\bullet \mc F_i^\pm(R)=(0=\Fil_0 \mc F_i^\pm(R) \subset \cdots \subset \Fil_{a_i^\pm}\mc F_i^\pm(R) = \mc F_i^\pm(R)); 
\]
 \item trivializations
\[
\Gr_j \Fil_\bullet \mc F_i^\pm(R) \colonequals \Fil_j \mc F_i^\pm(R)/\Fil_{j-1} \mc F_i^\pm(R) \cong (\mc O_i \otimes_{\Z} R)^j
\]
\end{itemize}
such that the following conditions hold:
\begin{itemize}
 \item $\Fil_j \mc F_i^\pm(R)$ is a free $\mc O_i \otimes_{\Z} R$-module for each $0 \leq j \leq a_i^\pm$;
 \item $\Fil_\bullet \mc F_i^\pm(R)$ corresponds, modulo $p^{w_i^\pm} R$ and via the isomorphism in \eqref{eq: iso can sub}, to the filtration on $(H_{i,n}^\pm)^{\Dual}(R_K) \otimes_{\Z} R$ given by the trivialization of $(H_{i,n}^\pm)^{\Dual}$;
 \item the trivializations of $\Gr_j \Fil_\bullet \mc F_i^\pm(R)$ are compatible, modulo $p^{w_i^\pm} R$ and via the isomorphism in \eqref{eq: iso can sub}, to the the trivializations of $(H_{i,n}^\pm)^{\Dual}$.
\end{itemize}
We set $\Iwtform_{\underline w} \colonequals \prod_{i=1}^k (\Iwtform_{i,w_i^+}^+ \times \Iwtform_{i,w_i^-}^-)$. We leave to the reader the definition of $\Iwtform_{\underline w} \to \mathfrak Y(p^n)(\underline v)$ in case (C). Let $v_i'\leq v_i$ for all $i$. We can repeat the above definition to obtain a formal scheme $\Iwtform'_{\underline w} \to \mathfrak Y(p^n)(\underline v')$. The restriction of $\Iwtform_{\underline w}$ to $\mathfrak Y(p^n)(\underline v')$ is naturally isomorphic to $\Iwtform_{\underline w}'$, so we can safely omit $\underline v$ from the notation $\Iwtform_{\underline w}$.

With the obvious notation, we define a formal group $\mathfrak T_{\underline w}^{\mc O}$ by
\[
\mathfrak T_{\underline w}^{\mc O}(R) \colonequals \ker(\T^{\mc O}(R) \to \T^{\mc O}(R_{\underline w})),
\]
and we make a similar definition for $\mathfrak B_{\underline w}^{\mc O}$ and $\mathfrak U_{\underline w}^{\mc O}$. We write $\T_{\underline w}^{\mc O}$, $\B_{\underline w}^{\mc O}$, and $\U_{\underline w}^{\mc O}$ for the corresponding rigid fibers. We have a natural action of $\B^{\mc O}(\Z_p) \mathfrak B_{\underline w}^{\mc O}$ on $\Iwtform_{\underline w}$ over $\mathfrak Y_{\Iw}(p^n)(\underline v)$.

Let $\chi \in \mc W(\underline w)(K)$ be a character. We set $\chi'(\U^{\mc O}(\Z_p) \mathfrak U_{\underline w}^{\mc O}(\mc O_{\C_p})) = 1$. Since $\chi$ is $\underline w$-locally analytic by Proposition~\ref{prop: cov we sp}, we can extend $\chi'$ to an analytic character
\[
\chi' \colon \B^{\mc O}(\Z_p) \mathfrak B_{\underline w}^{\mc O}(\mc O_{\C_p}) \to \C_p^\ast.
\]
We consider the morphism, obtained by composition,
\[
\pi \colon \Iwtform_{\underline w} \to \mathfrak Y_{\Iw}(p)(\underline v).
\]
\begin{defi} \label{defi: sheaf form}
We define the sheaf
\[
\underline{\mathfrak w}^{\dagger \chi}_{\underline v,\underline w} \colonequals \pi_\ast \mc O_{\Iwtform_{\underline w}}[\chi'],
\]
where $[\chi']$ means that we consider the subspace of homogeneous sections of degree $\chi'$ for the action of $\B^{\mc O}(\Z_p)\mathfrak B_{\underline w}$. We call $\underline{\mathfrak w}^{\dagger \chi}_{\underline v,\underline w}$ the $\underline v$-overconvergent, $\underline w$-analytic integral modular sheaf of weight $\chi$.
\end{defi}
\begin{prop} \label{prop: ban sheaf}
We have that $\underline{\mathfrak w}^{\dagger \chi}_{\underline v,\underline w}$ is a formal Banach sheaf (see the appendix of \cite{AIP}).
\end{prop}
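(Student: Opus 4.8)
The goal is to show that $\underline{\mathfrak w}^{\dagger \chi}_{\underline v,\underline w} = \pi_\ast \mc O_{\Iwtform_{\underline w}}[\chi']$ is a formal Banach sheaf, following the corresponding argument in \cite{AIP}. The plan is to unwind the definition of formal Banach sheaf from the appendix of \cite{AIP} and to verify its axioms on a suitable affine cover of $\mathfrak Y_{\Iw}(p^n)(\underline v)$, exploiting the fact that $\pi$ factors as a torsor map followed by an affine map.

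First I would choose an admissible affine formal open cover $\set{\Spf(R_\alpha)}$ of $\mathfrak Y(p^n)(\underline v)$ on which each $\mc F_i^\pm$ is free and the torsor $\Iwtform_{\underline w} \to \mathfrak Y(p^n)(\underline v)$ trivializes; passing to the Iwahori quotient is harmless since the relevant morphisms are finite. Over such a $\Spf(R)$, by Proposition~\ref{prop: F loc free} the choice of a basis of $\mc F_i^\pm(R)$ adapted to the canonical filtration identifies the fiber of $\Iwtform_{i,w_i^\pm}^\pm$ with a product of copies of $\mathfrak B^{\mc O}_{\underline w}$-cosets inside a flag variety, so that $\pi_\ast \mc O_{\Iwtform_{\underline w}}$ becomes, locally, the ring of functions on a product of formal balls (the unipotent part contributing polynomial, i.e. free, directions and the torus part contributing the analytic directions). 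Taking the $\chi'$-isotypic piece for the $\B^{\mc O}(\Z_p)\mathfrak B_{\underline w}^{\mc O}$-action then cuts this down, by the very construction of the analytic character $\chi'$ in Section~\ref{subsec: mod sheaves}, to the module of $w_i^\pm$-analytic functions on the unipotent variables with values in $R$. Concretely one gets $\underline{\mathfrak w}^{\dagger\chi}_{\underline v,\underline w}(\Spf R) \cong R\langle X_1,\dots,X_m\rangle$ up to completion, i.e. a $p$-adically complete, $p$-torsion free $R$-module which modulo $\varpi$ is the base change of a free $\mc O_K/\varpi$-module; this is exactly the local model required for a formal Banach sheaf.

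Having this explicit local description, the remaining work is bookkeeping: one checks that the transition maps between the $R_\alpha$'s are the ones prescribed by the cocycle of the torsor twisted by $\chi'$, hence are continuous isomorphisms of the appropriate type, so the locally defined sheaves glue; one checks flatness (equivalently $p$-torsion freeness) over $\mc O_K$, which follows because $\mc O_{\Iwtform_{\underline w}}$ is $\varpi$-torsion free as $\Iwtform_{\underline w}$ is built by normalization from flat formal schemes and taking a direct summand preserves this; and one checks the compatibility of the reduction mod $\varpi^m$ with the corresponding object over $\mc O_K/\varpi^m$. All of these are formal once the local structure is in hand, and they are essentially identical to \cite[Proof of the Banach sheaf property in §6]{AIP} in the Siegel case, which is why I would simply cite that argument after supplying the PEL-specific local computation.

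The main obstacle is getting the local picture exactly right in the PEL setting: one must track the action of $\mc O_i$ on $\mc F_i^\pm$, use the $\mc O_i$-linearity in Proposition~\ref{prop: F loc free} and Lemma~\ref{lemma: can sub free} (the fact that $H_{i,n}^\pm$ is étale-locally $(\mc O_i/p^n)^{a_i^\pm}$, not merely $(\Z/p^n)^{a_i^\pm d_i}$), and be careful about the restriction-of-scalars from $\mc O_i$ to $\Z_p$ in the definitions of $\GL^{\mc O}$, $\B^{\mc O}$, $\T^{\mc O}$, so that the "analytic directions" are indexed correctly and the module of $\underline w$-analytic functions on $\mathfrak U^{\mc O}_{\underline w}$ is genuinely free over $R$ of the stated rank. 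Once this is done the formal Banach sheaf axioms are satisfied essentially by inspection, so I would present the proof as: recall the definition, reduce to the local situation, invoke Proposition~\ref{prop: F loc free} to obtain the local trivialization, identify the $\chi'$-isotypic piece with a free-rank power series ring over $R$, and note that flatness and the gluing/reduction compatibilities are inherited from $\mc O_{\Iwtform_{\underline w}}$, exactly as in \cite{AIP}.
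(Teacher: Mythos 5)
Your proposal is correct and follows essentially the same route as the paper, which simply cites \cite[Proposition~5.2.2.2]{AIP}: trivialize the torsor $\Iwtform_{\underline w}$ over an affine cover where $\mc F_i^\pm$ is free (Proposition~\ref{prop: F loc free}), identify the local sections with a ring of $\underline w$-analytic functions on the unipotent directions after passing to the $\chi'$-isotypic piece, and note that flatness, gluing, and the mod-$\varpi^n$ compatibilities are inherited because the isotypic piece is a direct factor of the pushforward of $\mc O_{\Iwtform_{\underline w}}$ along an affine morphism. The only thing you spell out that AIP leaves implicit is the PEL-specific bookkeeping (the $\mc O_i$-module structure and the restriction of scalars in $\GL^{\mc O}$), which is indeed where care is needed but which does not change the shape of the argument.
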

\begin{proof}
This is proved in exactly the same way as \cite[Proposition~5.2.2.2]{AIP}.
\end{proof}
The rigid fiber of $\underline{\mathfrak w}^{\dagger \chi}_{\underline v,\underline w}$ is denoted $\underline \omega_{\underline v,\underline w}^{\dagger\chi}$. By definition it is the $\underline v$-overconvergent, $\underline w$-analytic modular sheaf of weight $\chi$.

We can define $\underline \omega_{\underline v,\underline w}^{\dagger\chi}$ directly as follows. Since $w_i^\pm < n$ for all $i$, the natural action of $\U^{\mc O}(\Z_p)$ on $\mathfrak Y(p^n)(\underline v)^{\rig}$ induces an action of $\U^{\mc O}(\Z_p)$ on $\Iwtform_{\underline w}^{\rig}$. Taking the quotient, we obtain a rigid space $\Iwtform_{\underline w}^{\rig,\Diamond} \to \mathfrak Y_{\Iwt}(p^n)(\underline v)^{\rig}$. We have an action of $\T^{\mc O}(\Z_p)\T_{\underline w}^{\mc O}$ on $\Iwtform_{\underline w}^{\rig,\Diamond}$ over $\mathfrak Y_{\Iw}(p)(\underline v)^{\rig}$ and there is an equality
\[
\underline \omega_{\underline v,\underline w}^{\dagger\chi} = \pi_\ast^{\Diamond} \mc O_{\Iwtform_{\underline w}^{\rig,\Diamond}}[\chi'],
\]
where $\pi^{\Diamond} \colon \Iwtform_{\underline w}^{\rig,\Diamond} \to \mathfrak Y_{\Iw}(p)(\underline v)^{\rig}$ is the natural morphism and we take homogeneous sections for the action of $\T^{\mc O}(\Z_p)\T_{\underline w}^{\mc O}$.

\section{Modular forms} \label{sec: mod forms}
Since we do not have a Koecher principle for sections of our modular sheaves, to define modular forms we find it convenient to work with the compactified variety. In particular we need the following
\begin{ass}
From now on we assume that $p$ is unramified in $\mc O_B$, in particular $p$ is a good prime in the sense of \cite[Definition~1.4.1.1]{lan}. We also assume \cite[Condition~1.4.3.10]{lan}, namely that our $\mc O_B$ lattice $(\Lambda, \langle \cdot, \cdot \rangle, h)$ is such that the action of $\mc O_B$ extends to an action of some maximal order $\mc O_B' \supset \mc O_B$. This is not a restriction, see \cite[Remark~1.4.3.9]{lan}. 
\end{ass}
We fix once and for all a compatible choice of admissible smooth rational polyhedral cone decomposition data for $Y$, as in \cite[Definition~6.3.3.4]{lan}. Associated to this choice there is an arithmetic toroidal compactification $Y^{\tor}$ of $Y$, see \cite[Theorem~6.4.1.1]{lan} for the main properties of $Y^{\tor}$. Let $\Sp(R_{\alg})$ be part of the data giving a good algebraic model for some representative of a cusp label associated to $Y^{\tor}$, as in \cite[Definition~6.3.2.5]{lan} and let $R \in \Nadm$ be the $p$-adic completion of $R_{\alg}$. We set $S \colonequals \Sp(R)$, so we have a semiabelian scheme $A \to S$. Let $U \subseteq S$ be the open subset corresponding to the unique open stratum of $\Sp(R_{\alg})$, in particular $A$ is abelian over $U$. We also have a Mumford $1$-motive $M$ over $U \hookrightarrow S$ whose semiabelian part will be denoted $\tilde A \to S$. By definition $\tilde A$ is a semiabelian scheme with constant toric rank and we have $\tilde A[p^n] \hookrightarrow A[p^n]$. Here $\tilde A[p^n]$ is finite and flat, while in general $A[p^n]$ is not. As explained in \cite[Section~2.3]{ben_these}, the approximation process needed to construct good formal models can be performed in such a way that there is an isomorphism $M[p^n] \cong A[p^n]$ and we always assume that this is true. There is an action of $\mc O_B$ on $A[p^n]$, $\tilde A[p^n]$, and $M[p^n]$. The two arrows $\tilde A[p^n] \hookrightarrow A[p^n]$ and $M[p^n] \cong A[p^n]$ can be assumed to be compatible with this action.

We can now repeat the definitions of Subsection~\ref{subsec: can sub} replacing $A$ by $\tilde A$, obtaining, for all $\underline v \in [0,1]^k$, the rigid space $\mathfrak Y(\underline v)^{\tor,\rig}$ and its formal model $\mathfrak Y(\underline v)^{\tor}$.
\subsection{Modular forms} \label{subsec: mod forms}
At the end of Section~\ref{sec: PEL data}, we have introduced the rigid Shimura variety $\mathfrak Y(p^n)(\underline v)^{\rig}$ and its formal model $\mathfrak Y(p^n)(\underline v)$. We have a canonical subgroup over $\mathfrak Y(\underline v)^{\tor}$ (see for example \cite[Sections~3.3 and 4.1]{AIP}), so we can define $\mathfrak Y(p^n)(\underline v)^{\tor,\rig}$ and its formal model $\mathfrak Y(p^n)(\underline v)^{\tor}$. Using the semiabelian variety over $\mathfrak Y(p^n)(\underline v)^{\tor}$ we see that the sheaf $\mc F$ extends to $\mathfrak Y(p^n)(\underline v)^{\tor}$. The analogue of Proposition~\ref{prop: F loc free} still holds, so we can define a space
\[
\zeta \colon \Iwtform_{\underline w} \to \mathfrak Y(p^n)(\underline v)^{\tor},
\]
where $\underline w$ is as above. There is an action of $\GL^{\mc O}(\Z_p)$ on $\mathfrak Y(p^n)(\underline v)^{\tor,\rig}$ and repeating the above definitions we obtain the tower
\[
\mathfrak Y(p^n)(\underline v)^{\tor} \to \mathfrak Y_{\Iwt}(p^n)(\underline v)^{\tor} \to \mathfrak Y_{\Iw}(p^n)(\underline v)^{\tor} \to \mathfrak Y(\underline v)^{\tor}.
\]
We are interested in the morphism
\[
\pi \colon \Iwtform_{\underline w} \to \mathfrak Y_{\Iw}(p)(\underline v)^{\tor}.
\]
Repeating the above definitions, if $\chi \in \mc W(\underline w)(K)$ is a character we then have the sheaves
\[
\underline{\mathfrak w}^{\dagger \chi}_{\underline v,\underline w} \colonequals \pi_\ast \mc O_{\Iwtform_{\underline w}}[\chi'] \mbox{ and } \underline{\omega}^{\dagger \chi}_{\underline v,\underline w} \colonequals (\underline{\mathfrak w}^{\dagger \chi}_{\underline v,\underline w})^{\rig}
\]
on $\mathfrak Y_{\Iw}(p)(\underline v)^{\tor}$ and $\mathfrak Y_{\Iw}(p)(\underline v)^{\tor,\rig}$.

Let $Y_K^{\an}$ and $Y_K^{\tor,\an}$ be the analytifications of $Y_K$ and $Y_K^{\tor}$ respectively. Since $Y^{\tor}$ is proper we have a natural isomorphism $\mathfrak{Y}^{\tor,\rig} \cong Y_K^{\tor,\an}$ and in particular there is an open immersion
\[
Y_K^{\an} \hookrightarrow \mathfrak{Y}^{\tor,\rig}.
\]
We have that $Y_K^{\an}$ is dense in $\mathfrak{Y}^{\tor,\rig}$.
\begin{defi} \label{defi: over loc an}
We define the space of $\underline v$-overconvergent, $\underline w$-analytic modular forms of weight $\chi$ by
\[
\M^{\dagger \chi}_{\underline v,\underline w} \colonequals \Homol^0(\mathfrak Y_{\Iw}(p)(\underline v)^{\tor,\rig},\underline \omega_{\underline v,\underline w}^{\dagger\chi}).
\]
We define the space of overconvergent locally analytic modular forms of weight $\chi$ by
\[
\M^{\dagger \chi} \colonequals \lim_{\substack{\underline v \to  0\\ \underline w \to \infty}} \M^{\dagger \chi}_{\underline v,\underline w},
\]
where the limit is over $\underline v$ and $\underline w$ for which there is $n$ such that $\chi \in \mc W(\underline w)(K)$ and the above properties are satisfied.
\end{defi}
\begin{defi} \label{defi: bounded}
Let $\chi \in \mc W(K)$ be any continuous character. If $F$ is a global section of $\underline \omega_{\underline v,\underline w}^{\dagger\chi}$ on $\mathfrak Y_{\Iw}(p)(\underline v)^{\tor,\rig}$ we say that $F$ is bounded if it is bounded as function on $\Iwtform_{\underline w}^{\rig} \times_{\mathfrak Y^{\tor,\rig}} Y_K^{\an}$.
\end{defi}
\begin{prop} \label{prop: mod form well def}
The natural restriction morphism
\[
\M^{\dagger\chi}_{\underline v, \underline w} \to \Homol^0_{\bo}(\mathfrak Y_{\Iw}(p)(\underline v)^{\tor,\rig} \times_{\mathfrak Y^{\tor,\rig}} Y_K^{\an},\underline{\omega}^{\dagger \chi}_{\underline v,\underline w}),
\]
where $\Homol^0_{\bo}(-)$ means that we consider only \emph{bounded} sections, is an isomorphism. In particular our definition of modular forms does not depend on the choice of the toroidal compactification.
\end{prop}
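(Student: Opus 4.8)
The statement is a bounded-extension result: a modular form defined on all of $\mathfrak Y_{\Iw}(p)(\underline v)^{\tor,\rig}$ restricts to a bounded section on the preimage of the open dense subvariety $Y_K^{\an}$, and conversely every such bounded section extends (uniquely) to the whole toroidal compactification. The plan is to follow the argument of the corresponding statement in \cite{AIP} (around their treatment of modular forms on the toroidal compactification), adapting it to the PEL setting. The key geometric inputs are already in place: $Y_K^{\an}$ is open and dense in $\mathfrak Y^{\tor,\rig}$, the space $\Iwtform_{\underline w}$ is finite over $\mathfrak Y(p^n)(\underline v)^{\tor}$ via $\zeta$, and $\underline{\omega}^{\dagger\chi}_{\underline v,\underline w}$ is the $\chi'$-isotypic part of $\pi_\ast\mc O_{\Iwtform_{\underline w}}$, hence is a direct summand of a coherent sheaf after suitable normalization; in particular it is a \emph{Banach sheaf} (Proposition~\ref{prop: ban sheaf}), and questions about its sections reduce to questions about functions on the formal/rigid space $\Iwtform_{\underline w}^{\rig}$.

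First I would reduce the claim to a statement about functions: via the description $\underline \omega_{\underline v,\underline w}^{\dagger\chi} = \pi_\ast^{\Diamond}\mc O_{\Iwtform_{\underline w}^{\rig,\Diamond}}[\chi']$, a global section of $\underline{\omega}^{\dagger\chi}_{\underline v,\underline w}$ on an open $V$ is the same as a function on $\zeta^{-1}(V)\subset \Iwtform_{\underline w}^{\rig}$ which is homogeneous of degree $\chi'$ for the torus action; and \emph{boundedness} in Definition~\ref{defi: bounded} is exactly boundedness of this function on $\Iwtform_{\underline w}^{\rig}\times_{\mathfrak Y^{\tor,\rig}}Y_K^{\an}$. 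So the content is: a homogeneous function on $\zeta^{-1}(Y_K^{\an})$ that is bounded there extends uniquely to a homogeneous function on all of $\Iwtform_{\underline w}^{\rig}$. Injectivity of the restriction map is immediate from density of $Y_K^{\an}$ in $\mathfrak Y^{\tor,\rig}$ (hence of $\zeta^{-1}(Y_K^{\an})$ in $\Iwtform_{\underline w}^{\rig}$) together with the fact that a section of a torsion-free sheaf on a reduced rigid space is determined by its restriction to a dense open. Surjectivity is the substantive half.

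For surjectivity I would argue locally on the toroidal boundary. Around each cusp we have the local model $S = \Sp(R)$ with $R$ the $p$-adic completion of a good algebraic model $R_{\alg}$, with open stratum $U\subseteq S$ where the semiabelian scheme $A$ is abelian; $U$ is the intersection of $S$ with $Y_K^{\an}$. The ring $R_{\alg}$ is, by Lan's construction, of the form $\widehat{\mathcal R}[[\sigma^\vee\cap\text{(character lattice)}]]$ (a completed monoid algebra over a smooth base), so $R[1/p]$ is an affinoid whose reduced underlying ring is \emph{normal} and in which $U$ is the complement of the vanishing locus of the boundary monomials $q^\alpha$. The crucial algebraic fact is then: a bounded function on the affinoid $R[1/p][1/q^\alpha \colon \alpha]$ (functions on $U$) that is bounded extends to a function on $R[1/p]$ — this is precisely the statement that the $q$-expansion of an overconvergent modular form has no negative-exponent terms once boundedness is imposed, i.e. a Koecher-type/Riemann-extension argument for the normal formal scheme $\mathfrak Y^{\tor}$. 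Concretely, writing the pullback $\zeta^*$ of our function as a $q$-expansion $\sum_\alpha c_\alpha(x) q^\alpha$ with $c_\alpha$ sections of the Banach sheaf over the smooth base, boundedness forces $c_\alpha = 0$ for $\alpha$ not in the cone, and convergence on the formal model is automatic because $R$ is $q$-adically complete. One then checks the glued extensions over the various cusps agree on overlaps (they do, by uniqueness on the dense open $Y_K^{\an}$) and that the resulting global function is still homogeneous of degree $\chi'$ (homogeneity is a closed condition, so it propagates from the dense open). Finally, independence of the toroidal compactification follows formally: two choices of cone decomposition give toroidal compactifications that agree over $Y_K^{\an}$, so the isomorphism with $\Homol^0_{\bo}(\cdots\times_{\mathfrak Y^{\tor,\rig}}Y_K^{\an},-)$ identifies the two spaces of modular forms.

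The main obstacle, and the place where I would spend the most care, is the boundary extension step — verifying that the PEL boundary charts of \cite{lan} behave exactly like the polydisc-times-cusp charts in the Siegel case of \cite{AIP}, so that the $q$-expansion argument goes through with the correct notion of boundedness. One must check that $\mathfrak Y(p^n)(\underline v)^{\tor}$ and $\Iwtform_{\underline w}$ are normal (this is why the construction took normalizations throughout) and that the canonical subgroup and the sheaf $\mc F$ extend across the boundary compatibly with the toric degeneration $\tilde A[p^n]\hookrightarrow A[p^n]$, $M[p^n]\cong A[p^n]$, as recalled just before Subsection~\ref{subsec: mod forms}; granting this, the extension is a normality/Riemann-extension statement rather than a hard new input. (It is in fact a subtlety here — the dependence of the extension on the boundedness hypothesis — that the referee flagged, as acknowledged in the introduction, so the argument must be run with the bounded version of the sheaf throughout rather than the naive one.)
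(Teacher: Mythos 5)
Your approach is sound in outline, but it is genuinely different from — and considerably more elaborate than — the proof in the paper, which is essentially a one-liner. The paper observes that the complement of $\Iwtform_{\underline w}^{\rig}\times_{\mathfrak Y^{\tor,\rig}}Y_K^{\an}$ in $\Iwtform_{\underline w}^{\rig}$ is Zariski closed of codimension $\geq 1$, and then directly invokes a rigid-analytic Riemann extension theorem (\cite[Theorem~1.6]{extension}): any \emph{bounded} function on the complement of a codimension $\geq 1$ Zariski closed subset of a normal rigid space extends uniquely across it. The extension automatically has weight $\chi'$, and one is done. There is no appeal to boundary charts, $q$-expansions, or the structure of Lan's degeneration data; in fact Remark~\ref{rmk: no koec}, immediately following the proposition, emphasizes that \emph{no} Koecher-principle-style argument is needed here precisely because the extension is across the boundary divisor of the toroidal compactification (not a higher-codimension locus) and boundedness is imposed by hypothesis.

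What you propose is instead a hand-built Koecher-type argument: reduce to the boundary charts $\Spf(R)$ of the good algebraic models, expand the pulled-back section in $q^\alpha$, and use boundedness to kill the exponents outside the cone. This can be made to work, but it requires exactly the verifications you flag as the ``main obstacle'' (that the PEL charts behave like the Siegel ones, that boundedness of the section gives uniform bounds on the coefficients $c_\alpha$, that the $\tilde A[p^n]\hookrightarrow A[p^n]$ and $M[p^n]\cong A[p^n]$ identifications are compatible with $\mc F$ across the boundary, etc.). All of that is avoided by the abstract extension theorem, which packages up the normality, boundedness, and codimension hypotheses at once. Your injectivity argument (density plus torsion-freeness) matches the paper's. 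You should be aware that the route you chose is precisely the one the author deliberately sidestepped; the extension-theorem proof is both shorter and more robust in this generality, where a true Koecher principle need not hold without the boundedness hypothesis.
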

\begin{proof}
The complement of $\Iwtform_{\underline w}^{\rig} \times_{\mathfrak Y^{\tor,\rig}} Y_K^{\an}$ in $\Iwtform_{\underline w}^{\rig}$ is a Zariski closed subset of codimension bigger or equal than $1$. By \cite[Theorem~1.6]{extension} any bounded function $F$ on $\Iwtform_{\underline w}^{\rig} \times_{\mathfrak Y^{\tor,\rig}} Y_K^{\an}$ extends (uniquely) to a function on $\Iwtform_{\underline w}^{\rig}$. This extension has the same weight as $F$ and gives an element of $\M^{\dagger\chi}_{\underline v, \underline w}$ as required.
\end{proof}
\begin{rmk} \label{rmk: no koec}
The reason why we do not need any Koecher principle in the proof of the above proposition is that we have defined overconvergent modular forms as section over a strict neighbourhood of the ordinary locus of $\mathfrak Y_{\Iw}(p)(\overline v)^{\tor,\rig}$, that contains also abelian varieties of bad reduction.
\end{rmk}
\subsection{\texorpdfstring{Classical modular forms}{Classical modular forms}} \label{subsec: class mod form} Fix $n$, $\underline v$ and $\underline w$ as above and assume moreover that $w_i^\pm > \frac{v_i}{p-1}$. We write $Y_{\Iw}(p)$ for the Shimura variety defined with the same PEL data as $Y$, but with Iwahoric level structure at $p$. In case (C), we have that $Y_{\Iw}(p)$ parametrizes couples $(A, (\Fil_\bullet G_i[p])_i )$, where
\begin{itemize}
 \item $A$ is an object parametrized by $Y$;
 \item $\Fil_\bullet G_i[p] = (0=\Fil_0 G_i[p] \subset \cdots \subset \Fil_{a_i} G_i[p])$ is a filtration of $G_i[p]$ made by $\mc O_i$-stable finite flat subgroups such that $\Fil_j G_i[p]$ has rank $p^{d_ij}$ and $\Fil_{a_i} G_i[p]$ is totally isotropic.
\end{itemize}
In case (A) we consider complete filtrations of $G_i^+[p]$. We consider the sheaves $\underline \omega_i^\pm$ and $\underline \omega$ on $Y_{\Iw}(p)$ defined similarly to the above ones. The unramifiedness assumption implies that $\underline \omega_i^\pm$ is a locally free $\mc O_i \otimes_{\Z_p} \mc O_{Y_{\Iw}(p)}$-module. We obtain the sheaf of trivializations of $\underline \omega$
\[
f \colon \mc T \to Y_{\Iw}(p).
\]
We let $\GL^{\mc O}$ act on $\mc T$ by $g \cdot \omega = \omega \circ g^{-1}$, where $g$ is a section of $\GL^{\mc O}$ and $\omega$ a trivialization of $\underline \omega$. In this way $\mc T$ becomes a $\GL^{\mc O}_{Y_{\Iw}(p)}$-torsor.

We write $\mathfrak Y_{\Iw}(p)$ for the formal completion of $Y_{\Iw}(p)$ along its special fiber. We have open immersions $\mathfrak Y_{\Iw}(p)(\underline v)^{\rig} \hookrightarrow \mathfrak Y_{\Iw}(p)^{\rig} \hookrightarrow Y_{\Iw}(p)_K^{\an}$. By the main result of \cite{ben_these} we have a toroidal compactification $Y_{\Iw}(p)^{\tor}$ of $Y_{\Iw}(p)$ (that we can choose in a way that is compatible with the choice we made for $Y^{\tor}$) and everything we just said extends to $Y_{\Iw}(p)^{\tor}$. Let $\mathfrak Y_{\Iw}(p)^{\tor}$ be the formal completion of $Y_{\Iw}(p)^{\tor}$ along its special fiber. The rigid space $\mathfrak Y_{\Iw}(p)(\underline v)^{\tor,\rig}$ is an open subspace of $\mathfrak Y_{\Iw}(p)^{\tor,\rig}$, but the special fiber of $\mathfrak Y_{\Iw}(p)^{\tor}$ is not the same as the special fiber of the space $\mathfrak Y_{\Iw}(p)(\underline v)^{\tor}$ defined above. In the sequel we will work only with the generic fiber of $\mathfrak Y_{\Iw}(p)^{\tor}$, so this is not a problem.

Let $X^\ast(\T^{\mc O})^+$ be the cone of dominant weights with respect to $\B^{\mc O}$. This cone is stable under $\chi \mapsto \chi'$. Let $\chi \in X^\ast(\T^{\mc O})$ be a weight. Recall that $d_i= [\mc O_i^\pm : \Q_p]$. In case (A) we can identify $\chi$ with a tuple of integers in
\[
\prod_{i=1}^k \prod_{s=1}^{d_i} ( \Z^{a_i^+}  \times \Z^{a_i^-}).
\]
We have that $\chi = (k_{i,s,j}^\pm)$ is dominant if and only if, for each $i=i,\ldots,k$ and each $s=1,\ldots,d_i$, we have
\[
k_{i,s,1}^+ \geq k_{i,s,2}^+ \geq \cdots \geq k_{i,s,a_i^+}^+ \mbox{ and } k_{i,s,1}^- \geq k_{i,s,2}^- \geq \cdots \geq k_{i,s,a_i^-}^-.
\]
In case (C) we can identify $\chi$ with a tuple of integers in
\[
\prod_{i=1}^k \prod_{s=1}^{d_i} \Z^{a_i}.
\]
We have that $\chi = (k_{i,s,j})$ is dominant if and only if, for each $i=i,\ldots,k$ and each $s=1,\ldots,d_i$ we have
\[
k_{i,s,1} \geq k_{i,s,2} \geq \cdots \geq k_{i,s,a_i}.
\]
We have that $\mathcal T$ extends to the toroidal compactification and if $\chi$ is a dominant weight then the space of classical modular forms of weight $\chi$ and Iwahoric level is by definition
\[
\M^\chi \colonequals \Homol^0(\mathfrak Y_{\Iw}(p)^{\tor,\rig},\underline \omega^\chi),
\]
where $\underline \omega^\chi$ is the subsheaf of $f_\ast \mc T$ given by homogeneous sections, for the action of $\B^{\mc O}_{Y_{\Iw}(p)}$, of degree $\chi'$. Note that the action of $\GL^{\mc O}$ on $\mc T$ induces an action of $\GL^{\mc O}$ on $\underline \omega^\chi$.

The natural inclusion $\mc F \hookrightarrow \underline \omega_{\mathfrak Y(p^n)(\underline v)}$ is generically an isomorphism and gives an open immersion
\[
\Iwtform_{\underline w}^{\rig} \hookrightarrow \left(\mc T_K^{\an}/\U^{\mc O}_{Y_{\Iw}(p)_K^{\tor,\an}} \right)_{\mathfrak Y(p^n)(\underline v)^{\tor,\rig}}.
\]
Taking the quotient by $\U^{\mc O}(\Z_p)$, we obtain an open immersion
\[
\Iwtform_{\underline w}^{\rig,\Diamond} \hookrightarrow \left(\mc T_K^{\an}/\U^{\mc O}_{Y_{\Iw}(p)_K^{\tor,\an}} \right)_{\mathfrak Y_{\Iwt}(p^n)(\underline v)^{\tor,\rig}}.
\]
\begin{prop} \label{prop: class mod forms}
The composition of the above open immersion with the natural morphism
\[
\left(\mc T_K^{\an}/\U^{\mc O}_{Y_{\Iw}(p)_K^{\tor,\an}} \right)_{\mathfrak Y_{\Iwt}(p^n)(\underline v)^{\tor,\rig}} \to \left(\mc T_K^{\an}/\U^{\mc O}_{Y_{\Iw}(p)_K^{\tor,\an}} \right)_{\mathfrak Y_{\Iw}(p)(\underline v)^{\tor,\rig}}
\]
remains an open immersion. In particular, if $\chi$ is a dominant weight, we have a natural injective morphism
\[
\M^\chi \hookrightarrow \M^{\dagger \chi}_{\underline v,\underline w}.
\]
\end{prop}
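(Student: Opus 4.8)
The plan is to deduce everything from the claim that the composite morphism, call it
$\theta\colon\Iwtform_{\underline w}^{\rig,\Diamond}\to\left(\mc T_K^{\an}/\U^{\mc O}_{Y_{\Iw}(p)_K^{\tor,\an}}\right)_{\mathfrak Y_{\Iw}(p)(\underline v)^{\tor,\rig}}$, is an open immersion, following the analogous comparison in \cite{AIP}. Granting this, the second assertion is formal: a dominant $\chi$ produces a character $\chi'$ of $\B^{\mc O}_{Y_{\Iw}(p)}$ trivial on its unipotent radical, so a section of $\underline\omega^\chi$ is the same as a degree-$\chi'$ homogeneous function on $\left(\mc T_K^{\an}/\U^{\mc O}_{Y_{\Iw}(p)_K^{\tor,\an}}\right)_{\mathfrak Y_{\Iw}(p)^{\tor,\rig}}$; pulling it back along $\theta$ and using that $\theta$ is equivariant for the inclusion $\T^{\mc O}(\Z_p)\T^{\mc O}_{\underline w}\hookrightarrow\T^{\mc O}_{Y_{\Iw}(p)}$ gives a $\chi'$-homogeneous function on $\Iwtform_{\underline w}^{\rig,\Diamond}$, i.e.\ a section of $\underline\omega^{\dagger\chi}_{\underline v,\underline w}$. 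This map is injective because $\mathfrak Y_{\Iw}(p)(\underline v)^{\tor,\rig}$ is a strict neighbourhood of the tube of the ordinary locus, which by Assumption~\ref{ass: ord} is Zariski dense, so it meets every connected component of the reduced proper rigid space $\mathfrak Y_{\Iw}(p)^{\tor,\rig}$, and a section of a line bundle vanishing on such an open subset vanishes.

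To prove that $\theta$ is an open immersion I would factor it as $\pi\circ\iota$, where $\iota$ is the open immersion already constructed and $\pi$ is the natural morphism obtained by base change from the level-lowering map $\mathfrak Y_{\Iwt}(p^n)(\underline v)^{\tor,\rig}\to\mathfrak Y_{\Iw}(p)(\underline v)^{\tor,\rig}$. The latter is finite étale — it only forgets the level at $p$, over a rigid space in characteristic zero, where by Lemma~\ref{lemma: can sub free} the relevant canonical subgroups are étale — hence $\pi$ is finite étale, and so $\theta=\pi\circ\iota$ is étale. Since an étale monomorphism of rigid spaces is an open immersion, the problem reduces to showing that $\theta$ is a monomorphism, i.e.\ injective on $T$-points for every rigid space $T$ over $K$.

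This monomorphism claim is the real point, and the step where I expect the work to be. One must reconstruct a $T$-point of $\Iwtform_{\underline w}^{\rig,\Diamond}$ from its image, namely from the underlying object with its Iwahoric level at $p$ together with the induced full flag and graded trivialisations on the sheaves $\underline\omega_i^\pm$. First, because the point lies in the image of $\iota$, the full flag with graded trivialisations on $\mc F_i^\pm$ is recovered as the intersection of the flag on $\underline\omega_i^\pm$ with the integral sublattice $\mc F_i^\pm\subset\underline\omega_i^\pm$, the graded trivialisations being the ones induced after inverting $p$; here one uses Proposition~\ref{prop: F loc free} and the running numerical conditions (in particular $w_i^\pm>\frac{v_i}{p-1}$) to guarantee that this intersection really is the datum parametrised by $\Iwtform_{\underline w}$. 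Then, reducing modulo $p^{w_i^\pm}$ and applying the $\mc O_i\otimes_{\Z_p}\mc O_T$-linear isomorphism \eqref{eq: iso can sub}, the flag and graded trivialisations on $\mc F_i^\pm$ transport to the corresponding data on $(H_{i,n}^\pm)^{\Dual}(T)\otimes_{\Z_p}\mc O_T$; and since by construction this data is the scalar extension of a flag of $\mc O_i$-submodules of $(H_{i,n}^\pm)^{\Dual}(T)$ with graded trivialisations — which is exactly the mod-$\U^{\mc O}(\Z_p)$ level-$p^n$ structure — the latter is recovered as well. This shows $\theta$ is a monomorphism and finishes the argument. The only delicate bookkeeping is in the second step, where one has to check that the numerical inequalities make the passages between $\mc F_i^\pm$ and $\underline\omega_i^\pm$, and between $\mc F_i^\pm\bmod p^{w_i^\pm}$ and the $p^n$-torsion level structure, reversible; this is entirely parallel to the corresponding verification in \cite{AIP}.
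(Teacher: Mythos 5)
Your strategy is genuinely different from the paper's. The paper decomposes $\Iwtform_{\underline w}^{\rig,\Diamond}$ and $\mc T_K^{\an}/\U^{\mc O}$ into their ``$i^\pm$-th'' factors and then invokes the explicit local computation of \cite[Proposition~5.3.1]{AIP} for each factor; you instead factor the composite $\theta=\pi\circ\iota$, observe that $\pi$ is finite \'etale and hence $\theta$ is \'etale, and then try to show that $\theta$ is a monomorphism, appealing to the fact that an \'etale monomorphism is an open immersion. The first two parts of your argument are sound: the level-forgetting map $\mathfrak Y_{\Iwt}(p^n)(\underline v)^{\tor,\rig}\to\mathfrak Y_{\Iw}(p)(\underline v)^{\tor,\rig}$ is indeed finite \'etale in this characteristic zero setting, so $\theta$ is \'etale, and an \'etale monomorphism of rigid spaces is an open immersion. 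Your deduction of the injectivity of $\M^\chi\to\M^{\dagger\chi}_{\underline v,\underline w}$ from the open immersion is also fine (even though $\underline\omega^\chi$ has rank higher than one in general, the ``vanishes on a dense open implies vanishes'' argument is still correct for a locally free sheaf).

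The gap is in the monomorphism step. Your reconstruction of the $\mathfrak Y_{\Iwt}(p^n)$-level structure does not work as stated. What you actually have access to, after reducing $\Fil_\bullet\mc F_i^\pm$ modulo $p^{w_i^\pm}$ and transporting along \eqref{eq: iso can sub}, is a flag with graded trivialisations on $(H_{i,n}^\pm)^{\Dual}(T)\otimes\mc O_T/p^{w_i^\pm}$, and this only determines the level-$p^n$ flag on $(H_{i,n}^\pm)^{\Dual}$ modulo $p^{w_i^\pm}$. Since $w_i^\pm<n$ always holds (the running assumption is $w_i^\pm\le n-v_i\frac{p^n}{p-1}<n$), this is strictly weaker information: a free $\mc O_i/p^n$-direct summand of $(\mc O_i/p^n)^{a}$ is \emph{not} determined by its reduction modulo $p^{w}$ when $w<n$ (already for $a=2$, $n=2$, $w=1$ the lines generated by $(1,0)$ and $(1,p)$ have the same mod-$p$ reduction but are distinct direct summands). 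So the sentence ``since by construction this data is the scalar extension of a flag\ldots the latter is recovered as well'' asserts exactly the faithfulness that fails. The same objection applies to the graded trivialisations: two level structures differing by an element of $\U^{\mc O}(\Z/p^n)\backslash\I^{\mc O}(\Z/p^n)$ that is congruent to the identity modulo $p^{w}$ would produce identical data after the mod-$p^{w}$ transport, and you would need a separate argument (and this is the actual content of the AIP computation, which pins things down by working in explicit coordinates rather than by a global monomorphism argument) to rule this out. As written, the claimed monomorphism is not established, and simply appealing to ``delicate bookkeeping parallel to AIP'' does not fill the hole, because the bookkeeping required is not a reversibility of the mod-$p^{w}$ passage but a genuinely different comparison.
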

\begin{proof}
We prove the proposition in case (A) and we leave case (C) to the reader. We have a decomposition $\Iwtform_{\underline w}^{\rig,\Diamond} = \prod_{i=1}^k \left(\Iwtform_{i,\underline w}^{+,\rig,\Diamond} \times \Iwtform_{i,\underline w}^{-,\rig,\Diamond}\right)$ and a corresponding decomposition of $\mc T_K^{\an}/\U^{\mc O}_{Y_{\Iw}(p)_K^{\tor,\an}}$. It is enough to prove the proposition for the map
\[
\Iwtform_{i,\underline w}^{\pm,\rig,\Diamond} \to \left(\mc T_{i,K}^{\pm,\an}/(\Res_{\mc O_i/\Z_p} \U_{a_i^\pm})_{Y_{\Iw}(p)_K^{\tor,\an}} \right)_{\mathfrak Y_{\Iw}(p)(\underline v)^{\tor,\rig}}.
\]
This is done explicitly as in \cite[Proposition~5.3.1]{AIP}.
\end{proof}
From now on, given $n$ and $\underline v$, we will always assume that any $\underline w \in \Q^k$ satisfies the condition in Subsection~\ref{subsec: mod sheaves}.
\section{Hecke operators} \label{sec: hecke op}
We will write $\mathcal Y(\underline v)$ for $\mathfrak Y(\underline v)^{\tor,\rig} \times_{\mathfrak Y^{\tor,\rig}} Y_K^{\an}$ and similarly for other objects. To define Hecke operators we work over $\mathcal Y_{\Iw}(p)(\underline v)$. This is enough since all the operators we are going to define send bounded functions to bounded functions, see Proposition~\ref{prop: mod form well def}.
\subsection{\texorpdfstring{Hecke operators outside $p$}{Hecke operators outside p}} \label{subsec: hecke op outside}
Recall that $N$ is a fixed positive integer not divisible by $p$ such that $\mc U^p(N) \subset \mc H$, where $\mc H$ is the level of our Shimura variety outside $p$. Let $l$ be a prime that does not divide $Np$. Let $A_1$ and $A_2$ be two abelian schemes given by the moduli problem of $Y_{\Iw}(p)_K$. An isogeny $f \colon A_1 \to f_2$ is an \emph{$l$-isogeny} if the following conditions are satisfied:
\begin{itemize}
 \item $f$ is $\mc O$-linear and its degree is a power of $l$;
 \item the pullback of the polarization of $A_2$ is a multiple of the polarization of $A_1$;
 \item the pullback of the flag of $A_2[p]$ is the flag of $A_1[p]$.
\end{itemize}
Let $f\colon A_1 \to A_2$ be an $l$-isogeny. We choose two symplectic $\mc O_{B,l}$-linear isomorphisms $\T_l(A_i) \cong \Lambda_l$, for $i=1,2$, and an isomorphism $\Z_l(1) \cong \Z_l$. In this way $f$ defines an element $\gamma \in G(\Q_l) \cap \End_{\mc O_{B,l}}(\Lambda_l) \times \Q_l^\ast$. The definition of $\gamma$ depends on the choice of the above isomorphisms, but the double class $G(\Z_l)\gamma G(\Z_l)$ depends only on $f$, and is called the \emph{type} of the $l$-isogeny $f$.

We fix a double class $G(\Z_l)\gamma G(\Z_l)$ as above. Let $C_\gamma \rightrightarrows Y_{\Iw}(p)_K$ be the moduli space that classifies $l$-isogenies $f \colon A_1 \to A_2$ of type $G(\Z_l)\gamma G(\Z_l)$, where $A_1$ and $A_2$ are abelian schemes (with additional structure) classified by $Y_{\Iw}(p)_K$. The arrow $p_j \colon C_\gamma \to Y_{\Iw}(p)_K$ send $f \colon A_1 \to A_2$ to $A_j$. Both $p_1$ and $p_2$ are finite and étale.

We fix $n$ and $\underline w$ as in the previous section. Let $C_\gamma(p^n)$ be the pullback, using $p_1$, of $C_\gamma$ to $Y(p^n)_K$. If $f \colon A_1 \to A_2$ is an isogeny parametrized by $C_\gamma(p^n)$, we can transport the trivializations of the canonical subgroups of $A_1$ to trivializations of the canonical subgroups of $A_2$, via $f$. In particular we have two finite étale morphisms $p_1,p_2 \colon C_\gamma(p^n) \rightrightarrows  Y(p^n)_K$. We write $f \colon A \to A'$ for the universal isogeny above $C_\gamma(p^n)$.

Let $\mathcal C_\gamma(p^n)(\underline v)$ be $C_\gamma(p^n)^{\an} \times_{p_1} \mathcal Y(p^n)(\underline v)$. Over $\mathcal C_\gamma(p^n)(\underline v)$, the pullback $f^\ast \colon \underline \omega_{A'} \to \underline \omega_A$ induces a morphisms $f^\ast \colon p_2^\ast \mc F \to p_1^\ast \mc F$. By Proposition~\ref{prop: F loc free}, we have that $f^\ast \colon p_2^\ast \mc F_{|\mathcal Y(p^n)(\underline v)} \stackrel{\sim}{\longrightarrow} p_1^\ast \mc F_{|\mathcal Y(p^n)(\underline v)}$ is an isomorphism. This gives a $\B^{\mc O}(\Z_p)\mathfrak B_{\underline w}^{\mc O}$-equivariant isomorphism
\[
f^\ast \colon p_2^\ast {\Iwtform_{\underline w}}_{|\mathcal Y(p^n)(\underline v)} \stackrel{\sim}{\longrightarrow} p_1^\ast {\Iwtform_{\underline w}}_{|\mathcal Y(p^n)(\underline v)}.
\]
We thus obtain a morphisms
\begin{gather} \label{eq: hecke op outside}
\Homol^0(\mathcal Y(p^n)(\underline v), \mc O_{\Iwtform_{\underline w}}) \stackrel{p_2^{\ast}}{\longrightarrow} \Homol^0(\mathcal C_\gamma(p^n)(\underline v), p_2^\ast \mc O_{\Iwtform_{\underline w}}) \stackrel{(f^\ast)^{-1,\rig}}{\longrightarrow} \\
\stackrel{(f^\ast)^{-1}}{\longrightarrow} \Homol^0(\mathcal C_\gamma(p^n)(\underline v), p_1^\ast\mc O_{\Iwtform_{\underline w}}) \stackrel{\tr p_1^{\rig}}{\longrightarrow} \Homol^0(\mathcal Y(p^n)(\underline v), \mc O_{\Iwtform_{\underline w}}). \nonumber
\end{gather}
\begin{defi} \label{defi: hecke op outside p}
Let $\chi \in \mc W(\underline w)(K)$ be a character. We define the operator $\T_\gamma \colon \M^{\dagger \chi}_{\underline v,\underline w} \to \M^{\dagger \chi}_{\underline v,\underline w}$ from the composition defined in \eqref{eq: hecke op outside} considering bounded and homogeneous sections, for the action of $\B^{\mc O}(\Z_p)\B_{\underline w}^{\mc O}$, of degree $\chi'$. We obtain an operator
\[
\T_\gamma \colon \M^{\dagger \chi} \to \M^{\dagger \chi}.
\]
Since $p$ is unramified in $B$, the operator $\T_\gamma$, for various $\gamma$ and $l$ commute. We let $\mathds T^{Np}$ be the restricted tensor product of the algebras
\[
\Z[G(\Z_l)/ (G(\Q_l) \cap \End_{\mc O_{B,l}}(\Lambda_l) \times \Q_l^\ast )\backslash G(\Z_l)]
\]
for $l$ a prime with $(l,Np)=1$. We have defined an action of $\mathds T^{Np}$ on $\M^{\dagger \chi}_{\underline v, \underline w}$ and on $\M^{\dagger \chi}$. 
\end{defi}
\subsection{\texorpdfstring{Hecke operators at $p$}{Hecke operators at p}} \label{subsec: hecke op at}
In this subsection we fix an index $i=1,\ldots,k$. The operators we are going to define will acts as the identity outside the $i$-th component. We assume that $\underline v$ satisfies $v_i < \frac{p-2}{2p-2}$ and that $\underline w$ is as above.
\subsubsection{\texorpdfstring{The operator $\U^\pm_{i,a_i^\pm}$}{The operator \U^\pm_{i,a_i^\pm}}} \label{subsub: Ug}
We start by defining an operator $\U^\pm_{i,a_i^\pm} = \U^+_{i,a_i^+}=\U^-_{i,a_i^-}$ in case (A) and an operator $\U_{i,a_i}$ in case (C) (this notation will be clear later on). In case (A), let $p_1,p_2 \colon \mathcal C(\underline v)^\pm_{i,a_i^\pm} \rightrightarrows \mathcal Y_{\Iw}(p)(\underline v)$ be the moduli space that classifies couples $(A,L_i^\pm)$ where $A$ is an abelian scheme classified by $\mathcal Y_{\Iw}(p)(\underline v)$ and $L_i^\pm \subset G_i^\pm[(\varpi_i^\pm)^{e_i}]$ is a finite and flat subgroup, stable under $\mc O_i$, and such that $G_i^\pm[(\varpi_i^\pm)^{e_i}] = H_{i,1}^\pm \oplus L_i^\pm$ (note that $L_i^+ \mapsto L_i^{+,\perp}$ gives a canonical isomorphism between $C^+_{i,a_i^+}$ and $C^-_{i,a_i^-}$). In case (C) we make a similar definition, adding the condition that $L_i$ is totally isotropic for the polarization of $G_i$. By Lemma~\ref{lemma: can sub free}, any $L_i^\pm$ as above is étale locally isomorphic to $(\mc O_i / (\varpi_i^\pm)^{e_i})^{a_i^\pm}$. The arrow $p_1 \colon C^\pm_{i,a_i^\pm} \to \mathcal Y_{\Iw}(p)(\underline v)$ forgets $L_i^\pm$ and $p_2$ is defined taking the quotient (via Morita's equivalence) by $L_i^+\oplus L_i^-$ or by $L_i$. The map $p_1$ is finite and étale. By \cite[Proposition~16]{fargues_can}, we have that $p_2$ gives a morphism, denoted again $p_2 \colon \mathcal C(\underline v)^\pm_{i,a_i^\pm} \to \mathcal Y_{\Iw}(p)(\underline v')$, where $\underline v' = (v')_i$ is defined by $v'_j=v_j$ if $j \neq i$ and $v_i'=v_i/p$.

We write $\mathcal C(p^n)(\underline v)^\pm_{i,a_i^\pm}$ for the pullback, using $p_1$, of $\mathcal C(\underline v)^\pm_{i,a_i^\pm}$ to $\mathcal Y(p^n)(\underline v)$. We have two natural morphisms $p_1 \colon \mathcal C(p^n)(\underline v)^\pm_{i,a_i^\pm} \to \mathcal Y(p^n)(\underline v)$ and $p_2 \colon \mathcal C(p^n)(\underline v)^\pm_{i,a_i^\pm} \to \mathcal Y(p^n)(\underline v')$. Moreover, over $\mathcal C(p^n)(\underline v)^\pm_{i,a_i^\pm}$, we have an isomorphism $f^\ast \colon p_2^\ast \mc F^{\rig} \stackrel{\sim}{\longrightarrow} p_1^\ast \mc F^{\rig}$ and a $\B^{\mc O}(\Z_p)\mathfrak B_{\underline w}^{\mc O}$-equivariant isomorphism (that is the identity outside the $i$-th component)
\[
f^\ast \colon p_2^\ast {\Iwtform_{\underline w}^{\rig}}_{|\mathcal Y(p^n)(\underline v')} \to p_1^\ast {\Iwtform_{\underline w}^{\rig}}_{|\mathcal Y(p^n)(\underline v)}.
\]
We thus obtain a morphism
\begin{gather*}
\Homol^0(\mathcal Y(p^n)(\underline v'), \mc O_{\Iwtform_{\underline w}}) \stackrel{p_2^{\ast}}{\longrightarrow} \Homol^0(\mathcal C(p^n)(\underline v)^\pm_{i,a_i^\pm}, p_2^\ast \mc O_{\Iwtform_{\underline w}}) \stackrel{(f^\ast)^{-1}}{\longrightarrow} \\
\stackrel{(f^\ast)^{-1}}{\longrightarrow} \Homol^0(\mathcal C(p^n)(\underline v)^\pm_{i,a_i^\pm}, p_1^\ast\mc O_{\Iwtform_{\underline w}}) \stackrel{\tr p_1^{\rig}}{\longrightarrow} \Homol^0(\mathcal Y(p^n)(\underline v), \mc O_{\Iwtform_{\underline w}^{\rig}}).
\end{gather*}
Taking the composition we get, for any $\chi \in \mc W(\underline w)$, an operator
\[
\widetilde{\U}_{i,a_i^\pm}^\pm \colon \M^{\dagger \chi}_{\underline v',\underline w} \to \M^{\dagger \chi}_{\underline v,\underline w}.
\]
We define
\[
\U_{i,a_i^\pm}^\pm \colonequals \left(\frac{1}{p} \right)^{d_ia_i^+a_i^-}\widetilde{\U}_{i,a_i^\pm}^\pm
\]
in case (A) and
\[
\U_{i,a_i} \colonequals \left(\frac{1}{p} \right)^{\frac{d_ia_i(a_i+1)}{2}}\widetilde{\U}_{i,a_i}
\]
in case (C) (these are the normalization factors of \cite{stefan}). We will use the same symbols to denote the composition with $\M^{\dagger \chi}_{\underline v,\underline w} \hookrightarrow \M^{\dagger \chi}_{\underline v',\underline w}$. The operators
\[
\U_{i,a_i^\pm}^\pm \colon \M^{\dagger \chi} \to \M^{\dagger \chi}
\]
are completely continuous.
\subsubsection{\texorpdfstring{The operators $\U^\pm_{i,j}$}{The operators \U^\pm_{i,j}}} \label{subsub: Uj}
Additionally to the various assumptions we made above, we assume in this subsection that $\underline v$ satisfies $v_i < \frac{p-2}{2p^2-p}$. We explain the construction in case (A) and we leave case (C) to the reader. Let $\overline w_i^\pm = (w_i^{\pm,r,s})_{1\leq s \leq r \leq a_i^\pm}$ be a $\frac{a_i^\pm(a_i^\pm + 1)}{2}$-tuple of rational numbers such that $w_i^{\pm,r,s} \in ]\frac{v_i}{p-1}, n-v_i\frac{p^n}{p-1}]$. We moreover assume that $w_i^{\pm,r+1,s} \geq w_i^{\pm,r,s}$ and $w_i^{\pm,r,s+1} \leq w_i^{\pm,r,s}$. We define $\Iwtform_{i,\overline w_i^\pm}^\pm \to \mathfrak Y(p^n)(\underline v)$ as follows. Let $R$ be in $\Nadm$ and suppose that $\mc F_i^\pm(R)$ is free. The $R$-points of $\Iwtform_{i,\overline w_i^\pm}^\pm$ correspond to the following data:
\begin{itemize}
 \item an $R$-point of $\mathfrak Y(p^n)(\underline v)$,
 \item a filtration
\[
\Fil_\bullet \mc F_i^\pm(R)=(0=\Fil_0 \mc F_i^\pm(R) \subset \cdots \subset \Fil_{a_i^\pm}\mc F_i^\pm(R) = \mc F_i^\pm(R)); 
\]
 \item trivializations
\[
\omega_i^r \colon \Gr_r \Fil_\bullet \mc F_i^\pm(R) \stackrel{\sim}{\longrightarrow} (\mc O_i \otimes_{\Z} R)^r
\]
\end{itemize}
such that the following conditions hold:
\begin{itemize}
 \item $\Fil_r \mc F_i^\pm(R)$ is a free $\mc O_i \otimes_{\Z} R$-module for each $0 \leq r \leq a_i^\pm$;
 \item let $e_1,\ldots, e_{a_i^\pm}$ be the $R$-points of $(H_{i,n}^\pm)^{\Dual}$ defined by the given isomorphism $(H_{i,n}^\pm)_K^{\Dual} \cong (\mc O_i/p^n)^{a_i^\pm}$ and set $w \colonequals n-v_i\frac{p^n}{p-1}$. We require that the following equality holds, in $\Fil_r \mc F_i^\pm (R)/(\Fil_{r-1} \mc F_i^\pm (R) + p^w)$,
\[
\omega_i^r = \sum_{r \geq s} a_{r,s} \HT_{i,w}^\pm(e_k), 
\]
where $a_{r,s} \in R$ are such that $a_{r,s} \in p^{w_i^{\pm,r,s}}R$ if $r > s$ and $(a_{r,r} -1) \in p^{w_i^{\pm,r,r}} R$.
\end{itemize}
If $w_i^{\pm,r,s}=w_i^\pm$ are all equal, the definition of $\Iwtform_{i,\overline w_i^\pm}^\pm$ is exactly the same as the definition of $\Iwtform_{i,w_i^\pm}^\pm$. All the constructions we did for $\Iwtform_{i,w_i^\pm}^\pm$ generalize to $\Iwtform_{i,\overline w_i^\pm}^\pm$ and we use the corresponding notation. For example $\Iwtform_{i,\overline w_i^\pm}^\pm$ extends to $\mathfrak Y(p^n)(\underline v)^{\tor}$ and we have $\Iwtform_{\overline{\underline w}} \to \mathfrak Y(p^n)(\underline v)^{\tor}$ or $\Iwtform_{\overline {\underline w}}^{\rig,\Diamond} \to \mathfrak Y_{\Iw}(p)(\underline v)^{\tor,\rig}$ (here we fix a tuple $\overline w^i$ as above for each $i$). Looking at the proof of Proposition~\ref{prop: class mod forms}, one sees that the condition $w_i^{\pm,r,s} \in ]\frac{v_i}{p-1}, n-v_i\frac{p^n}{p-1}]$ implies that the natural map $\Iwtform_{\overline{ \underline w}}^{\rig} \to \left ( \mc T_K^{\an}/\U^{\mc O}_{Y_{\Iw}(p)_K^{\tor,\an}} \right)_{\mathfrak Y_{\Iw}(p)(\underline v)^{\tor,\rig}}$ is an open immersion. If $\chi \in \mc W(\underline w)$, we can define as above the sheaf $\underline{\mathfrak w}^{\dagger\chi}_{\underline v, \underline{\overline w}}$ and its rigid fiber $\underline \omega^{\dagger\chi}_{\underline v, \underline{\overline w}}$. 	

We now fix $1 \leq j < a_i^\pm$ and we define the operator $\U^\pm_{i,j}$. Let $p_1,p_2 \colon \mathcal C(\underline v)^\pm_{i,j} \rightrightarrows \mathcal Y_{\Iw}(p)(\underline v)$ be the moduli space that classifies couples $(A,L_i^\pm)$ where $A$ is an abelian scheme classified by $\mathcal Y_{\Iw}(p)(\underline v)$ and $L_i^\pm \subset G_i^\pm[p^2]$ is a finite and flat subgroup, stable under $\mc O_i$, and such that $G_i^\pm[p] = \Fil_j H_{i,1}^\pm \oplus L_i^\pm[p]$. The arrow $p_1 \colon \mathcal C(\underline v)^\pm_{i,j} \to \mathcal Y_{\Iw}(p)(\underline v)$ forgets $L_i^\pm$ and $p_2$ is defined taking the quotient (via Morita's equivalence) by $L_i^+\oplus L_i^-$ (by \cite[Proposition~6.2.2.1]{AIP}, the image of $(A,L_i^\pm)$ by $p_2$ lies in $\mathcal Y_{\Iw}(p)(\underline v)$). Let $f \colon A \to A'$ be the universal isogeny over $\mathcal C(\underline v)^\pm_{i,j}$. It gives a $\B^{\mc O}(\Z_p)\mathfrak B^{\mc O}_{\overline{\underline w}}$ isomorphism
\[
f^\ast \colon p_2^\ast \left(\mc T_K^{\an}/\U^{\mc O}_{Y_{\Iw}(p)_K^{\tor,\an}}\right)_{|\mathcal Y_{\Iw}(p)(\underline v)} \stackrel{\sim}{\longrightarrow} p_1^\ast \left ( \mc T_K^{\an}/\U^{\mc O}_{Y_{\Iw}(p)_K^{\tor,\an}}\right)_{|\mathcal Y_{\Iw}(p)(\underline v)}.
\]
Let $\overline w_i^\pm=(w_i^{\pm,r,s})_{r,s}$ be a tuple as above with the additional condition that $w_i^{\pm,r,k} < n-2 - v_i\frac{p^n}{p-1}$. We definite a tuple ${\overline w_i^\pm}' = ({w_i'}^{\pm,r,s})_{r,s}$ by
\[
{w_i'}^{\pm,r,s} = 
\begin{cases}
w_i^{\pm,r,s} + 1 \mbox{ if }  r \geq j+1 \mbox{ and } s \leq j\\
w_i^{\pm,r,s} \mbox{ otherwise}
\end{cases}
\]
Starting with $\overline{\underline w}$, we define $\overline{\underline w}'$ modifying only the $i$-th component $\overline w_i^\pm$. It follows that we have the spaces $\Iwtform_{\overline{\underline w}}^{\rig}$ and $\Iwtform_{\overline{\underline w}'}^{\rig}$ and both are open subset of $\mc T_K^{\an}/\U^{\mc O}_{Y_{\Iw}(p)_K^{\tor,\an}}$. The proof of \cite[Proposition~6.2.2.2]{AIP} works also in our case, so we have
\[
(f^{\ast})^{-1} p_1^\ast \left( \Iwtform_{\overline{\underline w}}^{\rig} \right)_{|\mathcal Y_{\Iw}(p)(\underline v)} \subset p_2^\ast \left( \Iwtform_{\overline{\underline w}'}^{\rig}\right)_{|\mathcal Y_{\Iw}(p)(\underline v)}.
\]
For $\chi \in \mc W(\underline w)(K)$, we can now define an operator
\[
\U^\pm_{i,j} \colon \M^{\dagger \chi}_{\underline v, \overline{\underline w}'} \to \M^{\dagger \chi}_{\underline v, \overline{\underline w}}
\]
using the composition
\begin{gather*}
\Homol^0(\mathcal Y_{\Iw}(p)(\underline v), \underline \omega^{\dagger \chi}_{\underline v, \overline{\underline w}'}) \stackrel{p_2^{\ast}}{\longrightarrow} \Homol^0(\mathcal C(\underline v)^\pm_{i,j}, p_2^\ast \underline \omega^{\dagger \chi}_{\underline v, \overline{\underline w}'}) \stackrel{(f^\ast)^{-1}}{\longrightarrow} \\
\stackrel{(f^\ast)^{-1}}{\longrightarrow} \Homol^0(\mathcal C(\underline v)^\pm_{i,j}, p_1^\ast \underline \omega^{\dagger \chi}_{\underline v, \overline{\underline w}}) \stackrel{\tr p_1^{\rig}}{\longrightarrow} \Homol^0(\mathcal Y_{\Iw}(p)(\underline v), \underline \omega^{\dagger \chi}_{\underline v, \overline{\underline w}}).
\end{gather*}
We also have operators
\[
\U^\pm_{i,j} \colon \M^{\dagger \chi}_{\underline v, \overline{\underline w}} \to \M^{\dagger \chi}_{\underline v, \overline{\underline w}} \mbox{ and } \U^\pm_{i,j} \colon \M^{\dagger \chi} \to \M^{\dagger \chi}.
\]
\subsubsection{\texorpdfstring{The $\U$-operator}{The operators \U-operator}} \label{subsub: U} We work in case (A), case (C) is similar. We fix $n$. From now on, we will always assume that the following conditions are satisfied. Let $\underline v$ be such that the above inequalities hold and let $\underline w$ be such that $w_i^\pm \in ]\frac{v_i}{p-1},n-1 - a_i^\pm]$.

Let us fix $i= 1,\dots, k$. We set
\begin{gather*}
v_i' \colonequals v_i/p \mbox{ and } v_j' \colonequals v_j \mbox{ if } j \neq i, \\
w_j^{\pm,r,s} \colonequals w_j^\pm \mbox{ for all } r,s \mbox{ and for all } j,\\
{w_i'}^{\pm,r,s} \colonequals r - s + w_i^\pm \mbox{ and } {w_j'}^{\pm,r,s} \colonequals w_i^\pm \mbox{ if } j \neq i.
\end{gather*}
and for the rest we use the above notations. The product 
\[
\U_i \colonequals \U_{i,a_i^\pm}^\pm \times \prod_{j=1}^{a_i^\pm - 1} (\U_{i,j}^+ \times \U_{i,j}^-)
\]
gives an operator $\U_i \colon \M^{\dagger\chi}_{\underline v', \overline {\underline w}'} \to \M^{\dagger\chi}_{\underline v, \overline {\underline w}} = \M^{\dagger\chi}_{\underline v,\underline w}$. We denote with the same symbol the composition of $\U_i$ with the natural restriction $\M^{\dagger\chi}_{\underline v, \underline w} = \M^{\dagger\chi}_{\underline v, \overline {\underline w}} \to \M^{\dagger\chi}_{\underline v', \overline {\underline w}'}$, obtaining
\[
\U_i \colon \M^{\dagger\chi}_{\underline v, \underline w} \to  \M^{\dagger\chi}_{\underline v,\underline w}.
\]
Taking the product of the $\U_i$ we obtain the compact operators
\[
\U \colon \M^{\dagger\chi}_{\underline v, \underline w} \to  \M^{\dagger\chi}_{\underline v,\underline w}. \mbox{ and } \U \colon \M^{\dagger\chi} \to  \M^{\dagger\chi}.
\]
\begin{rmk} \label{rmk: class}
Usually one defines the $\U$-operator using only the operators $\U^\pm_{i,a_i^\pm}$, that improve the degree of overconvergence. The reasons for including also the operators $\U^\pm_{i,j}$ is that they improve analyticity, and this will be needed to prove classicity in Section~\ref{sec: class}.
\end{rmk}
We let $\m U_p$ be the free $\Z$-algebra generated by the Hecke operators at $p$ and let $\m T \colonequals \m T^{Np} \otimes_{\Z} \m U_p$. We simply call $\m T$ the Hecke algebra. It acts on all the spaces we have defined.  
\subsection{Families} \label{subsec: families}
Let $\mc U \subset \mc W$ be an affinoid associated to the algebra $A \colonequals \mc O_{\mc U}(\mc U)$. There is $\underline w$ such that $\mc U \subset \mc W(\underline w)$, and we fix one such. We write $\chi^{\un}_{\mc U} \colon \T^{\mc O}(\Z_p) \to A^\ast$ for the universal character over $\mc U$. Let $\underline v$ and $n$ be as usual.
\begin{prop} \label{prop: families}
There is a Banach sheaf $\underline \omega_{\underline v, \underline w}^{\dagger \chi^{\un}_{\mc U}}$ on $\mathfrak Y_{\Iw}(p)(\underline v)^{\tor,\rig} \times \mc U$ such that, for any $\chi \in \mc U(K)$, the fiber of $\underline \omega_{\underline v, \underline w}^{\dagger \chi^{\un}_{\mc U}}$ at $\mathfrak Y_{\Iw}(p)(\underline v)^{\tor,\rig} \times \set{\chi}$ is canonically isomorphic to $\underline \omega_{\underline v, \underline w}^{\dagger \chi}$. On the global sections of $\underline \omega_{\underline v, \underline w}^{\dagger \chi^{\un}_{\mc U}}$, there is an action of the Hecke operators defined above.
\end{prop}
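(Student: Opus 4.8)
The plan is to repeat the construction of $\underline\omega^{\dagger\chi}_{\underline v,\underline w}$ from Subsection~\ref{subsec: mod sheaves} and Definition~\ref{defi: sheaf form}, with the universal character $\chi^{\un}_{\mc U}$ in place of a single weight $\chi$, after base change to $\mc U$. Since $\mc U\subset\mc W(\underline w)$, the character $\chi^{\un}_{\mc U}$ is $\underline w$-locally analytic in the sense of Subsection~\ref{subsec: weight space}; hence, declaring $(\chi^{\un}_{\mc U})'$ to be trivial on $\U^{\mc O}(\Z_p)\mathfrak U_{\underline w}^{\mc O}(\mc O_{\C_p})$, it extends to an analytic character
\[
(\chi^{\un}_{\mc U})'\colon\B^{\mc O}(\Z_p)\mathfrak B_{\underline w}^{\mc O}(\mc O_{\C_p})\to A^\ast.
\]
Fix a ring of definition $A^\circ\subset A$ and set $\mathfrak U\colonequals\Spf(A^\circ)$, a formal model of $\mc U$. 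Base changing $\Iwtform_{\underline w}\to\mathfrak Y_{\Iw}(p)(\underline v)^{\tor}$ along $\mathfrak U\to\Spf(\mc O_K)$, writing $\pi$ for the resulting projection to $\mathfrak Y_{\Iw}(p)(\underline v)^{\tor}\times_{\Spf(\mc O_K)}\mathfrak U$ and $[(\chi^{\un}_{\mc U})']$ for the subsheaf of sections homogeneous of degree $(\chi^{\un}_{\mc U})'$ under the residual $\B^{\mc O}(\Z_p)\mathfrak B_{\underline w}^{\mc O}$-action, I would set
\[
\underline{\mathfrak w}^{\dagger\chi^{\un}_{\mc U}}_{\underline v,\underline w}\colonequals\pi_\ast\mc O_{\Iwtform_{\underline w}\times\mathfrak U}\big[(\chi^{\un}_{\mc U})'\big],\qquad\underline\omega^{\dagger\chi^{\un}_{\mc U}}_{\underline v,\underline w}\colonequals\big(\underline{\mathfrak w}^{\dagger\chi^{\un}_{\mc U}}_{\underline v,\underline w}\big)^{\rig}.
\]
On the rigid side this agrees with $\pi^{\Diamond}_\ast\mc O_{\Iwtform_{\underline w}^{\rig,\Diamond}\times\mc U}[(\chi^{\un}_{\mc U})']$, cf.\ the end of Subsection~\ref{subsec: mod sheaves}.

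That $\underline{\mathfrak w}^{\dagger\chi^{\un}_{\mc U}}_{\underline v,\underline w}$ is a formal Banach sheaf --- so that its rigid fibre $\underline\omega^{\dagger\chi^{\un}_{\mc U}}_{\underline v,\underline w}$ is a Banach sheaf on $\mathfrak Y_{\Iw}(p)(\underline v)^{\tor,\rig}\times\mc U$ --- is proved exactly as Proposition~\ref{prop: ban sheaf}, following \cite[Proposition~5.2.2.2]{AIP}: over an affine formal open $\Spf(R)$ of $\mathfrak Y(p^n)(\underline v)^{\tor}$ on which $\mc F$ is free, a trivialization identifies the $(\chi^{\un}_{\mc U})'$-homogeneous part of $\pi_\ast\mc O$ with the module of $\underline w$-analytic functions on a formal torus with values in $R\widehat{\otimes}_{\mc O_K}A^\circ$, twisted by $(\chi^{\un}_{\mc U})'$; this is the unit ball of an orthonormalizable $A$-Banach module, and these modules glue over $\mathfrak Y_{\Iw}(p)(\underline v)^{\tor}\times_{\Spf(\mc O_K)}\mathfrak U$ to a formal Banach sheaf in the sense of the appendix of \cite{AIP}. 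The only novelty with respect to the single-weight case is the bookkeeping of the $A$-module structure, which causes no difficulty precisely because everything in sight is orthonormalizable over $A$; in particular this orthonormalizability makes the operation $\pi_\ast(-)\big[(\chi^{\un}_{\mc U})'\big]$ commute with base change. Since the pullback of $\chi^{\un}_{\mc U}$ along $\set{\chi}\hookrightarrow\mc U$ is $\chi$, so that $(\chi^{\un}_{\mc U})'$ pulls back to $\chi'$, and since the restriction of $\mc O_{\Iwtform_{\underline w}\times\mathfrak U}$ to the fibre over $\chi$ is $\mc O_{\Iwtform_{\underline w}}$, we conclude that the fibre of $\underline\omega^{\dagger\chi^{\un}_{\mc U}}_{\underline v,\underline w}$ at $\mathfrak Y_{\Iw}(p)(\underline v)^{\tor,\rig}\times\set{\chi}$ is canonically isomorphic to $\underline\omega^{\dagger\chi}_{\underline v,\underline w}$.

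Finally, the Hecke operators of Section~\ref{sec: hecke op} are built from the correspondences $\mathcal C_\gamma(p^n)(\underline v)$, $\mathcal C(\underline v)^{\pm}_{i,a_i^\pm}$, $\mathcal C(\underline v)^{\pm}_{i,j}$ together with pullback, the isomorphism $f^\ast$, and trace maps, none of which involves the weight; only the last step --- taking the component homogeneous of a prescribed degree --- does. Running the same compositions over $\mc U$ on $(\chi^{\un}_{\mc U})'$-homogeneous bounded sections (and using, as in the proof of Proposition~\ref{prop: mod form well def}, that the boundary has codimension $\geq 1$, so that bounded sections over $\mathcal Y_{\Iw}(p)(\underline v)\times\mc U$ extend uniquely) produces the action of $\m T$ --- including the operator $\U$, which remains completely continuous relative to $A$ --- on $\Homol^0(\mathfrak Y_{\Iw}(p)(\underline v)^{\tor,\rig}\times\mc U,\underline\omega^{\dagger\chi^{\un}_{\mc U}}_{\underline v,\underline w})$; by the fibre computation this action specializes at each $\chi\in\mc U(K)$ to the one on $\M^{\dagger\chi}_{\underline v,\underline w}$. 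The only genuine work is the verification of the Banach-sheaf axioms and of the base-change statement in this relative setting, and both reduce, as in \cite[\S~5.2]{AIP}, to the orthonormalizability over $A$ of the local models; I therefore expect no essential new obstacle.
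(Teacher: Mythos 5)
Your proposal is correct and follows essentially the same route as the paper: base-change the torsor $\Iwtform_{\underline w}$ (or its $\Diamond$-quotient on the rigid side) over $\mc U$, push forward along $\pi\times\operatorname{id}$, take sections homogeneous of degree $(\chi^{\un}_{\mc U})'$, and observe that the Hecke constructions of Section~\ref{sec: hecke op} carry over unchanged since they never touch the weight. The paper's proof is in fact terser than yours --- it simply writes down $(\pi_1\times\operatorname{id})_\ast\mc O_{\Iwtform_{\underline w}^{\Diamond,\rig}\times\mc U}[(\chi^{\un}_{\mc U})']$ and asserts that the Hecke operators work in families --- so the extra care you take with the formal model, the Banach-sheaf verification over $A$, and the fibre/base-change compatibility is entirely consistent with the intended argument and if anything fills in details the paper leaves implicit.
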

\begin{proof}
We have a morphism $\pi_1 \times \operatorname{id} \colon \Iwtform_{\underline w}^{\Diamond,\rig} \times \mc U \to \mathfrak Y_{\Iw}(p)(\underline v)^{\tor,\rig} \times \mc U$. On
\[
(\pi_1 \times \operatorname{id})_\ast \mc O_{\Iwtform_{\underline w}^{\Diamond,\rig} \times \mc U}
\]
there is an action of $\B^{\mc O}(\Z_p)\B_{\underline w}$, and we define $\underline \omega_{\underline v, \underline w}^{\dagger \chi^{\un}}$ taking sections homogeneous of degree $(\chi^{\un}_{\mc U})'$. The definitions given above of the Hecke operators work in families without problems.
\end{proof}
\begin{defi} \label{defi: fam mod form}
We define
\[
\M^{\dagger\mc U}_{\underline v, \underline w} \colonequals \Homol^0(\mathfrak Y_{\Iw}(p)(\underline v)^{\tor,\rig} \times \mc U,\underline \omega_{\underline v, \underline w}^{\dagger \chi^{\un}_{\mc U}}).
\]
It is the space of families of $\underline v$-overconvergent $\underline w$-locally analytic modular forms parametrized by $\mc U$. We set
\[
\M^{\dagger\mc U} \colonequals \lim_{\substack{\underline v \to  0\\ \underline w \to \infty}} \M^{\dagger\mc U}_{\underline v, \underline w}.
\]
It is the space of overconvergent locally analytic modular forms parametrized by $\mc U$.
\end{defi}
We have an action of the Hecke operators on both $\M^{\dagger\mc U}_{\underline v, \underline w}$ and $\M^{\dagger\mc U}$. The $\U$-operator on $\M^{\dagger\mc U}$ is completely continuous. We have that $\M^{\dagger\mc U}_{\underline v, \underline w}$ is a Banach $A$-module.
\begin{rmk} \label{rmk: spec}
Let $\chi \in \mc U(K)$. Then we have a natural specialization map
\[
\M^{\dagger\mc U}_{\underline v, \underline w} \to \M^{\dagger\chi}_{\underline v, \underline w}.
\]
We do not know whether this map is surjective or not (but see Corollary~\ref{coro: proj} below for the cuspidal case).
\end{rmk}
\begin{rmk} \label{rmk: int fam}
We can define a formal model $\mathfrak W$ and $\mathfrak W(\underline w)$ of $\mc W$ and $\mc W(\underline w)$. If $\mathfrak{U} \subset \mathfrak{W}(\underline w)$ is affine, we have an analogue of Proposition~\ref{prop: families}, obtaining the formal Banach sheaf $\underline{\mathfrak w}_{\underline v, \underline w}^{\dagger \chi^{\un}_{\mathfrak U}}$ on $\mathfrak Y_{\Iw}(p)(\underline v)^{\tor} \times \mathfrak U$. 
\end{rmk}

\section{Cuspidal forms and eigenvarieties} \label{sec: heck var}
\subsection{The minimal compactifications}
Recall that in Section~\ref{sec: mod forms} we have fixed a compatible choice of admissible smooth rational polyhedral cone decomposition data for $Y$. This choice gives the toroidal compactification $Y^{\tor}$ of $Y$. We also have the minimal compactification $Y^{\min}$ and a proper morphism
\[
\xi \colon Y^{\tor} \to Y^{\min},
\]
see \cite[Theorem~7.2.4.1]{lan} for the main properties of $Y^{\min}$. The sheaves $\underline \omega_i$ extend to the minimal compactification, and the Hasse invariants give sections $\Ha_i$ of $\det(\underline \omega_i)^{\otimes p-1}$ on the special fiber of $Y^{\min}$. We then obtain a function $\Hdg \colon \mathfrak Y^{\min,\rig} \to [0,1]^k$ and so the rigid space $\mathfrak Y(\underline v)^{\min,\rig}$ and its formal model $\mathfrak Y(\underline v)^{\min}$ are defined.

Let $\Sp(R_{\alg})$ be part of the data giving a good algebraic model as in the beginning of Section~\ref{sec: mod forms}. In particular we have the semiabelian schemes $A \to \Sp(R)$ and $\tilde A \to \Sp(R)$. Since the formal completions of $A$ and $\tilde A$ along the closed stratum of $S = \Sp(R_{\alg})$ are isomorphic, we have an isomorphism of locally free sheaves over $S$
\[
e_A^\ast \Omega^1_{A/S} \cong e_{\tilde A}^\ast \Omega^1_{\tilde A/S},
\]
where $e_A$ and $e_{\tilde A}$ are the corresponding zero sections. Hasse invariants are compatible with respect to the induced isomorphisms, so $\xi$ gives a morphism
\[
\xi(\underline v) \colon \mathfrak Y(\underline v)^{\tor} \to \mathfrak Y(\underline v)^{\min}.
\]
We will write $D$ for both the boundary of $Y^{\tor}$ and the boundary of $\mathfrak Y(\underline v)^{\tor}$. We then have the following
\begin{teo}[{\cite[Theorem~8.2.1.2]{lan_ram}}] \label{teo: ann coho}
We have
\[
\R^q \xi_\ast \mc O_{Y^{\tor}}(-D) = 0
\]
if $q \geq 1$.
\end{teo}
\begin{notation}
Let $\mathfrak m$ be the maximal ideal of $\mc O_K$. If $\star$ is an object defined over $\mc O_K$, we let $\star_n$ be the reduction of $\star$ modulo $\mathfrak m^n$.
\end{notation}
\begin{coro} \label{coro: ann coho v}
We have
\[
\R^q \xi(\underline v)_\ast \mc O_{\mathfrak Y(\underline v)^{\tor}}(-D) = 0
\]
if $q \geq 1$.
\end{coro}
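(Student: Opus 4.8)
The plan is to deduce Corollary~\ref{coro: ann coho v} from Theorem~\ref{teo: ann coho} by proper base change along the contraction $\xi$. Write $\hat\xi\colon\mathfrak Y^{\tor}\to\mathfrak Y^{\min}$ for the $\varpi$-adic completion of $\xi$; it is proper, and since forming higher direct images along a proper morphism commutes with $\varpi$-adic completion, Theorem~\ref{teo: ann coho} gives $\R^q\hat\xi_\ast\mc O_{\mathfrak Y^{\tor}}(-D)=0$ for $q\geq1$, i.e. the complex $R\hat\xi_\ast\mc O_{\mathfrak Y^{\tor}}(-D)$ is concentrated in degree $0$. The structural input is that the passage to $\underline v$-neighbourhoods is compatible with $\xi$: on $\mathfrak Y^{\tor}$ the line bundles $\det(\underline\omega_j)^{\otimes p-1}$ and the Hasse sections $\Ha_j$ are pulled back along $\hat\xi$ from $\mathfrak Y^{\min}$ (the $\underline\omega_j$ extend to the minimal compactification), so the functor of Definition~\ref{defi: form model} yields $\tilde{\mathfrak Y}(\underline v)^{\tor}=\tilde{\mathfrak Y}(\underline v)^{\min}\times_{\mathfrak Y^{\min}}\mathfrak Y^{\tor}$. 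I would then want to upgrade this, after normalisation in the generic fibre, to the assertion that the square with vertices $\mathfrak Y(\underline v)^{\tor}$, $\mathfrak Y^{\tor}$, $\mathfrak Y(\underline v)^{\min}$, $\mathfrak Y^{\min}$, vertical arrows $\xi(\underline v)$ and $\hat\xi$, and horizontal arrows the normalisation maps $g$ (top) and $h$ (bottom), is cartesian.

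Granting that this square is cartesian, the corollary follows at once: derived base change for the proper morphism $\hat\xi$ gives $R\xi(\underline v)_\ast Lg^\ast\mc O_{\mathfrak Y^{\tor}}(-D)\cong Lh^\ast R\hat\xi_\ast\mc O_{\mathfrak Y^{\tor}}(-D)$, and on the left $Lg^\ast\mc O_{\mathfrak Y^{\tor}}(-D)=g^\ast\mc O_{\mathfrak Y^{\tor}}(-D)=\mc O_{\mathfrak Y(\underline v)^{\tor}}(-D)$ because $\mc O_{\mathfrak Y^{\tor}}(-D)$ is a line bundle and the boundary of $\mathfrak Y(\underline v)^{\tor}$ is by construction the preimage of $D$. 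Since $R\hat\xi_\ast\mc O_{\mathfrak Y^{\tor}}(-D)$ sits in degree $0$, its derived pullback $Lh^\ast$ has no cohomology in positive degrees, whence $\R^q\xi(\underline v)_\ast\mc O_{\mathfrak Y(\underline v)^{\tor}}(-D)=0$ for all $q\geq1$.

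The main obstacle is thus to prove that the square is cartesian, equivalently that $\mathfrak Y(\underline v)^{\min}\times_{\mathfrak Y^{\min}}\mathfrak Y^{\tor}$ is already normal and hence equal to its normalisation $\mathfrak Y(\underline v)^{\tor}$ — that is, that the normalisation step commutes with the (non-flat) base change along $\hat\xi$. I expect to settle this by a local computation: $\mathfrak Y^{\tor}$ is smooth over $\mc O_K$, and by Definition~\ref{defi: form model} the scheme $\tilde{\mathfrak Y}(\underline v)^{\min}$ has, étale locally on $\mathfrak Y^{\min}$, an explicit presentation in terms of the $\det(\underline\omega_j)$ and $\Ha_j$, so that its base change to $\mathfrak Y^{\tor}$ is explicitly presented over a regular base, with non-smooth — hence potentially non-normal — locus of codimension $\geq2$, and Serre's criterion applies; this is the same verification carried out in the corresponding step of \cite{AIP}. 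Should one wish to avoid the cartesian square, the alternative is to argue with the finite maps $g$ and $h$ directly: since $h$ is finite (so $h_\ast$ is exact and faithful) and $g$ is affine, the vanishing in question is equivalent to $\R^q\hat\xi_\ast\bigl(\mc O_{\mathfrak Y^{\tor}}(-D)\otimes g_\ast\mc O_{\mathfrak Y(\underline v)^{\tor}}\bigr)=0$, which follows once one knows that $g_\ast\mc O_{\mathfrak Y(\underline v)^{\tor}}$ — or a finite filtration of it with graded pieces powers of the line bundles $\det(\underline\omega_j)$ — is pulled back from $\mathfrak Y^{\min}$, the vanishing being stable under the twist and under extensions.
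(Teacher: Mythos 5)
Your base-change strategy is genuinely different from what the paper does. The paper's proof does not deduce the corollary from Theorem~\ref{teo: ann coho} by a direct pullback argument; rather it follows \cite[Proposition~8.2.1.2]{AIP} to describe the \emph{formal fibers} of $\xi(\underline v)$ over the boundary of $\mathfrak Y(\underline v)^{\min}$, checks that this description agrees with the one in \cite[Section~8.2]{lan_ram}, and then re-runs Lan's proof (which is a formal-functions-type argument) verbatim in this setting. So the paper is doing a local analysis over the minimal compactification from scratch, not invoking a base-change isomorphism.

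Your route, as you rightly flag, hinges on an un-established step: that the square is cartesian, i.e.\ that the fibre product $\mathfrak Y(\underline v)^{\min}\times_{\mathfrak Y^{\min}}\mathfrak Y^{\tor}$ is already normal, so that it coincides with the normalisation $\mathfrak Y(\underline v)^{\tor}$. The Serre-criterion plan you sketch is not straightforward here: $\mathfrak Y^{\min}$ is itself singular along the boundary, $\hat\xi$ is neither flat nor smooth (its fibres over boundary strata are positive-dimensional), and normality is not stable under such base change. Moreover a second gap is left unaddressed even granting cartesianness: the derived base-change isomorphism $Lh^\ast R\hat\xi_\ast\cong R\xi(\underline v)_\ast Lg^\ast$, and equally the passage $\hat\xi^\ast(h_\ast\mc O)\otimes^{\mathbf L}\mc O(-D)\rightsquigarrow R\hat\xi_\ast(\dots)$ via the projection formula, both require the square to be Tor-independent, which is not automatic since neither $h$ nor $\hat\xi$ is flat. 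Your fallback --- pushing everything to $\mathfrak Y^{\tor}$, using that $g$ and $h$ are finite, and showing that $g_\ast\mc O_{\mathfrak Y(\underline v)^{\tor}}$ has a finite filtration with graded pieces pulled back as line bundles from $\mathfrak Y^{\min}$ --- is the cleanest escape from the Tor-independence issue (since line bundles are flat), and is in the spirit of Proposition~\ref{prop: small ban} later in the paper. But producing that filtration \emph{after normalisation} is precisely where the local analysis near the boundary enters, and that is the same work that the paper's formal-fiber route carries out. In short: your two sketched paths are plausible alternatives, but the substantive input (cartesianness plus Tor-independence, or the filtration on $g_\ast\mc O$) is not established, and there is no reason to expect it to be any easier than the paper's direct adaptation of Lan's proof.
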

\begin{proof}
Arguing as in \cite[Proposition~8.2.1.2]{AIP} we have that the description of the formal fibers of $\xi(\underline v)$ is the same as the description given in \cite[Section~8.2]{lan_ram}, hence to prove the corollary one can repeat the proof of \cite[Theorem~8.2.1.2]{lan_ram}.
\end{proof}
Let $\eta(\underline v)$ be the composition 
\[
\mathfrak Y(p^n)(\underline v)^{\tor} \to \mathfrak Y(\underline v)^{\tor} \to \mathfrak Y(\underline v)^{\min}
\]
and let $\rho(\underline v)$ be the first morphism. We will still write $D$ for its inverse image under $\eta(\underline v)$.
\begin{prop} \label{prop: vani fin}
We have
\[
\R^q \eta(\underline v)_\ast \mc O_{\mathfrak Y(p^n)(\underline v)^{\tor}}(-D) = 0
\]
if $q \geq 1$.
\end{prop}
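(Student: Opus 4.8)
The plan is to factor $\eta(\underline v)$ as the composition $\xi(\underline v) \circ \rho(\underline v)$ and reduce to Corollary~\ref{coro: ann coho v}. Since $\rho(\underline v) \colon \mathfrak Y(p^n)(\underline v)^{\tor} \to \mathfrak Y(\underline v)^{\tor}$ is finite by construction, hence affine, one has $\R^q\rho(\underline v)_\ast = 0$ for $q \geq 1$ on coherent sheaves, so the Grothendieck spectral sequence for $\xi(\underline v) \circ \rho(\underline v)$ degenerates and it suffices to prove
\[
\R^q\xi(\underline v)_\ast\bigl(\rho(\underline v)_\ast \mc O_{\mathfrak Y(p^n)(\underline v)^{\tor}}(-D)\bigr) = 0, \qquad q \geq 1.
\]
By construction the divisor $D$ on the cover is the pullback of the boundary of $\mathfrak Y(\underline v)^{\tor}$, so $\mc O(-D) = \rho(\underline v)^\ast \mc O_{\mathfrak Y(\underline v)^{\tor}}(-D)$ and the projection formula for the invertible sheaf $\mc O(-D)$ gives
\[
\rho(\underline v)_\ast \mc O_{\mathfrak Y(p^n)(\underline v)^{\tor}}(-D) \cong \mc O_{\mathfrak Y(\underline v)^{\tor}}(-D) \otimes_{\mc O_{\mathfrak Y(\underline v)^{\tor}}} \mc A,
\]
where $\mc A \colonequals \rho(\underline v)_\ast \mc O_{\mathfrak Y(p^n)(\underline v)^{\tor}}$ is a coherent sheaf of $\mc O_{\mathfrak Y(\underline v)^{\tor}}$-algebras. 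Thus one is reduced to showing $\R^q\xi(\underline v)_\ast(\mc O_{\mathfrak Y(\underline v)^{\tor}}(-D) \otimes \mc A) = 0$ for $q \geq 1$.

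The next step is to check this vanishing locally on $\mathfrak Y(\underline v)^{\min}$; it is trivial away from $D$, where $\xi(\underline v)$ is an isomorphism, so it can be verified on the formal completion along each boundary stratum $Z$. There, by Stroh's construction of toroidal compactifications with level at $p$ \cite{ben_these} and the description of the formal fibres of $\xi(\underline v)$ already used in the proof of Corollary~\ref{coro: ann coho v} (that of \cite[Section~8.2]{lan_ram}), the formal fibre is a toroidal compactification of a torus-torsor over an abelian scheme over $Z$, and the key point is that the restriction of $\mc A$ to it is the pullback along $\xi(\underline v)$ of a finite covering of $Z$. Granting this, one applies the projection formula a second time and is reduced to $\R^q\xi(\underline v)_\ast \mc O_{\mathfrak Y(\underline v)^{\tor}}(-D) = 0$, which is Corollary~\ref{coro: ann coho v}; equivalently, one reruns the proof of \cite[Theorem~8.2.1.2]{lan_ram} with the extra, $\xi(\underline v)$-fibrewise constant, coefficient sheaf $\mc A$.

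The hard part is the claim in the previous paragraph, namely that over a boundary chart the trivialisations of the canonical subgroups $H_{i,n}^\pm$ come from the stratum $Z$. This should follow from the fact that the semiabelian degeneration of $A$ along $Z$ has $p$-divisible group an extension of that of its abelian quotient --- which lives over $Z$ --- by that of a torus $T$, whose $p^n$-torsion $T[p^n]$ is trivialised constantly once the necessary roots of unity have been adjoined to $K$; so $H_{i,n}^\pm$ is the preimage of the canonical subgroup of the abelian quotient, and its trivialisations are controlled by a finite covering of $Z$ together with the fixed trivialisations of $T[p^n]$. Making this compatibility precise, using the Mumford construction and the results of \cite{ben_these}, is the only real content; everything else is formal manipulation of the two spectral sequences and the projection formula.
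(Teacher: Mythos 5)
You factor $\eta(\underline v) = \xi(\underline v)\circ\rho(\underline v)$, kill the $\rho$-direction by affineness of the finite morphism $\rho(\underline v)$, and try to reduce to Corollary~\ref{coro: ann coho v} through two applications of the projection formula. This is a different packaging from the paper's proof, which directly describes the formal boundary charts of $\mathfrak Y(p^n)(\underline v)^{\tor}$ (as in \cite[Proposition~8.2.1.3]{AIP}) and then reruns Lan's vanishing argument from \cite[Theorem~8.2.1.2]{lan_ram} at level $p^n$. Your closing ``equivalently'' remark is right that the two routes converge, and your packaging does not in fact shorten the argument.

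There are two places where your write-up either defers or silently assumes the same boundary analysis that is the whole content of the paper's proof. First, your initial projection formula requires $\mc O_{\mathfrak Y(p^n)(\underline v)^{\tor}}(-D) \cong \rho(\underline v)^\ast \mc O_{\mathfrak Y(\underline v)^{\tor}}(-D)$, i.e.\ that the scheme-theoretic preimage $\rho(\underline v)^{-1}(D)$ is already reduced. You assert this ``by construction,'' but the paper only declares $D$ upstairs to be the set-theoretic inverse image; and $\rho(\underline v)$ is obtained by normalising $\mathfrak Y(\underline v)^{\tor}$ in a finite \emph{rigid} cover, so it is finite but certainly not étale as a morphism of formal schemes (it is ramified over the special fibre). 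That it is nonetheless unramified along $D$ is true, but it follows precisely from the local structure at the boundary --- the trivialisations of $H_{i,n}^\pm$ on a chart decompose into the (constant, once roots of unity are in $K$) toric $p^n$-torsion and data over the abelian quotient, so they do not move in the toric direction --- and this is the same analysis you later explicitly flag as ``the hard part.'' Second, the ``$\xi$-fibrewise constancy'' of $\mc A = \rho(\underline v)_\ast\mc O$ that you then need for the second projection formula is again that statement. So you have correctly located the crux; but your proof invokes it twice without proving it, once silently and once openly, and once it is supplied the argument collapses into the paper's.
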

\begin{proof}
This can be proved by the same arguments used in \cite[Section~8.2]{lan}. To do this, we need to describe the local charts of $\mathfrak{Y}(p^n)(\underline v)^{\tor}$ similarly to the one of $\mathfrak{Y}(\underline v)^{\tor}$. Over the rigid fiber $\mathfrak{Y}(p^n)(\underline v)^{\tor,\rig}$ we know that such a description exists. We can now argue as in \cite[Proposition~8.2.1.3]{AIP} to extend this description to $\mathfrak{Y}(p^n)(\underline v)^{\tor}$.
\end{proof}
\subsection{A dévissage} \label{subsec: dev}
Let $\mc W(\underline w)^\circ$ be the rigid open unit disk of dimension $\dim(\T_{\underline w}^{\mc O})$. As in \cite[Section~2.2]{AIP}, we have an analytic universal character
\[
\chi^{\un,\circ} \colon \mathfrak{T}_{\underline w}^{\mc O}(\Z_p) \to \mc O_{\mc W(\underline w)^\circ}(\mc W(\underline w)^\circ)^\ast.
\]
We set $\mathfrak{W}(\underline w)^\circ \colonequals \Spf(\mc O_K \llbracket X_1,\ldots,X_{\dim(\T_{\underline w}^{\mc O})} \rrbracket )$, a formal model of $\mc W(\underline w)^\circ$. Using the construct of \cite[Section~2.2]{AIP}, we see that the character $\chi^{\un,\circ}$ is induced by a formal universal character, denoted in the same way,
\[
\chi^{\un,\circ} \colon \mathfrak{T}_{\underline w}^{\mc O}(\Z_p) \to \mc O_{\mathfrak{W}(\underline w)^\circ}(\mathfrak{W}(\underline w)^\circ)^\ast.
\]
Recall that we have the morphism $\zeta \colon \Iwtform_{\underline w} \to \mathfrak Y(p^n)(\underline v)^{\tor}$. The torus $\mathfrak{T}_{\underline w}^{\mc O}$ acts on this space, so, if $\chi^\circ \in \mc W(\underline w)^\circ(K)$, we can define the sheaves on $\mathfrak Y(p^n)(\underline v)^{\tor}$
\[
\underline{\mathfrak w}^{\dagger \chi^\circ}_{\underline v,\underline w} \colonequals \zeta_\ast \mc O_{\Iwtform_{\underline w}}[{\chi^\circ}'].
\]
As in Remark~\ref{rmk: int fam}, if $\mathfrak U^\circ \subset \mathfrak W(\underline w)^\circ$ is affine, with associated character $\chi_{\mathfrak U^\circ}^{\un}$, we have the family $\underline{\mathfrak w}^{\dagger \chi_{\mathfrak U^\circ}^{\un}}_{\underline v,\underline w}$. It is a sheaf on $\mathfrak Y(p^n)(\underline v)^{\tor} \times \mathfrak{U}$.
\begin{rmk} \label{rmk: dev is dev}
If $\chi \in \mc W(\underline w)(K)$, then we have its image $\chi^\circ \in \mc W(\underline w)^\circ(K)$ and we can recover $\underline{\mathfrak w}^{\dagger \chi}_{\underline v,\underline w}$ from $\underline{\mathfrak w}^{\dagger \chi^\circ}_{\underline v,\underline w}$ projecting to $\mathfrak Y_{\Iw}(p)(\underline v)^{\tor}$ and taking homogeneous sections of degree $\chi'$ for the action of $\B^{\mc O}(\Z_p)\mathfrak{B}_{\underline w}^{\mc O}$. Equivalently, one can project $\underline{\mathfrak w}^{\dagger \chi^\circ}_{\underline v,\underline w}$ to $\mathfrak Y_{\Iw}(p)(\underline v)^{\tor}$, twist the action of $\B^{\mc O}(\Z_p)\mathfrak{B}_{\underline w}^{\mc O}$ by $-\chi'$, and finally take invariant sections for the action of $\B^{\mc O}(\Z/p^n\Z)$. A similar remark holds for families.
\end{rmk}
We recall that $\star_n$ means the reduction modulo $\mathfrak{m}^n$ of $\star$, where $\mathfrak{m}$ is the maximal ideal of $\mc O_K$.
\begin{prop} \label{prop: small ban}
Let $\chi^\circ \in \mc W(\underline w)^\circ(K)$ be a character. Then $\underline{\mathfrak w}^{\dagger \chi^\circ}_{\underline v,\underline w, 1}$ is an inductive limit of coherent sheaves that are an extension of the trivial sheaf. In particular, $\underline{\mathfrak w}^{\dagger \chi^\circ}_{\underline v,\underline w}$ is a small formal Banach sheaf (see \cite[Definition~A.1.2.1]{AIP} for the definition of small formal Banach sheaf). If $\mathfrak U^\circ \subset \mathfrak W(\underline w)^\circ$ is affine, a similar result hold for $\underline{\mathfrak w}^{\dagger \chi_{\mathfrak U^\circ}^{\un}}_{\underline v,\underline w}$.
\end{prop}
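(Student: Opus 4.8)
The plan is to reproduce, in our setting, the dévissage carried out in \cite[\S~8.1]{AIP}; the only genuinely new ingredient is that the structure group is $\GL^{\mc O}=\prod_i\Res_{\mc O_i/\Z_p}(\GL_{a_i^+}\times\GL_{a_i^-})$ (case (A)) or $\prod_i\Res_{\mc O_i/\Z_p}\GL_{a_i}$ (case (C)), in place of the symplectic group considered there, and this combinatorial change is harmless since $\GL^{\mc O}$ is a product of general linear groups over the rings $\mc O_i$. As $\underline{\mathfrak w}^{\dagger\chi^\circ}_{\underline v,\underline w}$ is already a formal Banach sheaf by (the proof of) Proposition~\ref{prop: ban sheaf}, only the smallness has to be established: one must exhibit $\underline{\mathfrak w}^{\dagger\chi^\circ}_{\underline v,\underline w,1}$ as the increasing union of coherent subsheaves, each of which is an iterated extension of copies of $\mc O_{\mathfrak Y(p^n)(\underline v)^{\tor}_1}$.

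First I would set up an explicit local model for $\zeta\colon\Iwtform_{\underline w}\to\mathfrak Y(p^n)(\underline v)^{\tor}$. Fix an affine open $\mathfrak V=\Spf R$ of $\mathfrak Y(p^n)(\underline v)^{\tor}$, with $R\in\Nadm$, small enough that each $\mc F_i^\pm$ is free over $\mc O_i\otimes_{\Z}R$ and that the fixed étale trivialisations of the $(H_{i,n}^\pm)^{\Dual}$ are defined over $\mathfrak V$. Then, using Proposition~\ref{prop: F loc free} exactly as in the proof of \cite[Proposition~4.3.1]{AIP}, a choice of $\mc O_i\otimes R$-basis of $\mc F_i^\pm(R)$ adapted to the reference flag identifies $\Iwtform_{\underline w}|_{\mathfrak V}$ with the product over $\mathfrak V$ of a formal affine space in ``unipotent'' coordinates $\underline X=(X_{i,r,s}^\pm)_{s<r}$, whose entries (the strictly below-diagonal entries of the change-of-basis matrices) run over $p^{w_i^\pm}$-balls, and of the formal torus $\mathfrak T_{\underline w}^{\mc O}$ (the diagonal entries, running over $1+p^{w_i^\pm}$-balls). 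Consequently $\zeta_\ast\mc O_{\Iwtform_{\underline w}}|_{\mathfrak V}$ is the $p$-adic completion of $R[\underline X]\otimes_{\mc O_K}\mc O(\mathfrak T_{\underline w}^{\mc O})$, and passing to the sections that are homogeneous of degree ${\chi^\circ}'$ for the $\mathfrak T_{\underline w}^{\mc O}$-action extracts the rank-one free $\mc O_K$-submodule of $\mc O(\mathfrak T_{\underline w}^{\mc O})$ spanned by the function attached to $\chi^\circ$ (well defined, with values in $1+\mathfrak m$, because $\chi^\circ\in\mc W(\underline w)^\circ(K)$). Hence $\underline{\mathfrak w}^{\dagger\chi^\circ}_{\underline v,\underline w}|_{\mathfrak V}$ is the $p$-adic completion of the free $R$-module on the monomials $\underline X^{\underline a}$, twisted by the invertible ``weight'' module of $\chi^\circ$.

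Next I would reduce modulo $\mathfrak m$ and filter by $\underline X$-degree. Since $\chi^\circ$ is a character of the pro-$p$ group $\mathfrak T_{\underline w}^{\mc O}(\Z_p)$ with values in $1+\mathfrak m$, its weight module becomes the trivial sheaf modulo $\mathfrak m$, so $\underline{\mathfrak w}^{\dagger\chi^\circ}_{\underline v,\underline w,1}|_{\mathfrak V_1}$ is free over $\mc O_{\mathfrak V_1}$ on the monomials $\underline X^{\underline a}$. Let $\mc G^{(N)}$ be the subsheaf generated by the monomials of $\underline X$-degree at most $N$; this filtration is independent of the local choices (the change-of-basis matrices are unipotent, so the gluing preserves the $\underline X$-degree up to lower-order terms) and hence globally defined, each $\mc G^{(N)}$ is coherent — locally free of finite rank over $\mc O_{\mathfrak Y(p^n)(\underline v)^{\tor}_1}$ — the exact sequence $0\to\mc G^{(N-1)}\to\mc G^{(N)}\to\mc G^{(N)}/\mc G^{(N-1)}\to 0$ has quotient a finite direct sum of copies of the trivial sheaf, and $\underline{\mathfrak w}^{\dagger\chi^\circ}_{\underline v,\underline w,1}=\bigcup_N\mc G^{(N)}$. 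This is exactly the definition of a small formal Banach sheaf (\cite[Definition~A.1.2.1]{AIP}). The family statement is proved identically, over $\mathfrak Y(p^n)(\underline v)^{\tor}\times\mathfrak U^\circ$, replacing $\chi^\circ$ by the formal universal character $\chi^{\un}_{\mathfrak U^\circ}$; modulo $\mathfrak m$ this character is no longer trivial but, under $\mathfrak W(\underline w)^\circ=\Spf\mc O_K\llbracket X_1,\dots,X_m\rrbracket$ with $m\colonequals\dim(\T_{\underline w}^{\mc O})$, it is congruent to $1$ modulo the augmentation ideal $(X_1,\dots,X_m)$, so refining the $\underline X$-degree filtration by the $(X_1,\dots,X_m)$-adic filtration again exhibits $\underline{\mathfrak w}^{\dagger\chi^{\un}_{\mathfrak U^\circ}}_{\underline v,\underline w,1}$ as an increasing union of coherent sheaves that are iterated extensions of the trivial sheaf over $\mathfrak Y(p^n)(\underline v)^{\tor}_1\times\mathfrak U^\circ_1$.

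The main obstacle is the first step: writing out the local trivialisation of $\Iwtform_{\underline w}$ over $\mathfrak Y(p^n)(\underline v)^{\tor}$ and keeping track of how the $\mathfrak T_{\underline w}^{\mc O}$-action — equivalently, via Remark~\ref{rmk: dev is dev}, the twisted $\B^{\mc O}(\Z/p^n\Z)$-invariants used to recover the modular sheaf $\underline{\mathfrak w}^{\dagger\chi}_{\underline v,\underline w}$ — interacts with the $\underline X$-degree filtration. Once this bookkeeping is arranged exactly as in \cite[\S~8.1]{AIP}, the reduction modulo $\mathfrak m$ and the passage to families are routine.
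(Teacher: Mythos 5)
Your argument matches the paper's, which simply reduces to the individual factors $\Iwtform_{i,w_i^\pm}^\pm$ and defers the explicit local description and the degree-filtration dévissage to \cite[\S\S~8.1.5--8.1.6 and Corollary~8.1.6.2]{AIP}; you have just unpacked that reference. One point worth correcting: the change-of-basis matrices for a basis of $\mc F_i^\pm$ lifting the fixed trivialisation of $(H_{i,n}^\pm)^{\Dual}$ are not unipotent but only congruent to the identity modulo $p^{w_i^\pm}$; since $p^{w_i^\pm}\in\mathfrak m$, the induced transition on the normalised coordinates $Y=X/p^{w_i^\pm}$ reduces modulo $\mathfrak m$ to an affine translation, which is what actually makes the $\underline X$-degree filtration globally defined with trivial graded pieces.
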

\begin{proof}
We prove the proposition for $\underline{\mathfrak w}^{\dagger \chi^\circ}_{\underline v,\underline w}$, the proof for $\underline{\mathfrak w}^{\dagger \chi_{\mathfrak U^\circ}^{\un}}_{\underline v,\underline w}$ is similar. Using the decompositions $\Iwtform_{\underline w} = \prod_{i=1}^k (\Iwtform_{i,w_i^+}^+ \times \Iwtform_{i,w_i^-}^-)$ in case (A) and $\Iwtform_{\underline w} = \prod_{i=1}^k \Iwtform_{i,w_i}$ in case (C), we can prove the proposition for the sheaf
\[
\mc G \colonequals \zeta_{i,\ast}^\pm \mc O_{\Iwtform_{i,w_i^\pm}^\pm}[\chi'],
\]
where $\zeta_i^\pm \colon \Iwtform_{i,w_i^\pm}^\pm \to \mathfrak{Y}(p^n)(\underline v)^{\tor}$ is the natural morphism. One can give a completely explicit description of the sections of $\mc G$, as in \cite[Subsections~8.1.5 and 8.1.6]{AIP}. The proof is then identical to the one of \cite[Corollary 8.1.6.2]{AIP}.
\end{proof}
Recall the morphism $\eta(\underline v) \colon \mathfrak Y(p^n)(\underline v)^{\tor} \to \mathfrak Y(\underline v)^{\min}$.
\begin{prop} \label{prop: proj small ban}
For any $\chi^\circ \in \mc W(\underline w)^\circ(K)$ and any affine $\mathfrak U^\circ \subset \mathfrak W(\underline w)^\circ$ we have that
\[
\eta(\underline v)_\ast (\underline{\mathfrak w}^{\dagger \chi^\circ}_{\underline v,\underline w}(-D)) \mbox{ and } (\eta(\underline v \times \operatorname{id}))_\ast (\underline{\mathfrak w}^{\dagger \chi_{\mathfrak U^\circ}^{\un}}_{\underline v,\underline w} (-D))
\]
are small formal Banach sheaves
\end{prop}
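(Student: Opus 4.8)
The strategy is the dévissage announced by the title of this subsection: one filters $\underline{\mathfrak w}^{\dagger\chi^\circ}_{\underline v,\underline w}$ by means of Proposition~\ref{prop: small ban} into coherent sheaves built out of $\mc O_{\mathfrak Y(p^n)(\underline v)^{\tor}}$, pushes these forward along $\eta(\underline v)$, and controls the higher direct images using Proposition~\ref{prop: vani fin}. This is the exact analogue of the corresponding statement in \cite{AIP} and the proof is essentially the same. I describe the argument for $\underline{\mathfrak w}^{\dagger\chi^\circ}_{\underline v,\underline w}(-D)$; the family sheaf $(\eta(\underline v)\times\operatorname{id})_\ast\bigl(\underline{\mathfrak w}^{\dagger\chi^{\un}_{\mathfrak U^\circ}}_{\underline v,\underline w}(-D)\bigr)$ on $\mathfrak Y(\underline v)^{\min}\times\mathfrak U^\circ$ is handled word for word the same way, replacing $\eta(\underline v)$ by $\eta(\underline v)\times\operatorname{id}_{\mathfrak U^\circ}$, using the family part of Proposition~\ref{prop: small ban}, and invoking flat base change along the affine $\mathfrak U^\circ$.

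The key step is the vanishing
\[
\R^q\eta(\underline v)_\ast\bigl(\underline{\mathfrak w}^{\dagger\chi^\circ}_{\underline v,\underline w}(-D)\bigr)=0\qquad\text{for all }q\geq1,
\]
which I would prove modulo each power of $\mathfrak m$ and then pass to the limit. By Proposition~\ref{prop: small ban}, $\underline{\mathfrak w}^{\dagger\chi^\circ}_{\underline v,\underline w,1}$ is a filtered inductive limit of coherent sheaves $\mathcal G^{(h)}$, each admitting a finite filtration whose graded pieces are isomorphic to the structure sheaf of $\mathfrak Y(p^n)(\underline v)^{\tor}_1$; twisting by $\mc O(-D)$ and using that the reduction $\eta(\underline v)_1$ is a proper morphism of noetherian schemes, so that $\R^q\eta(\underline v)_{1,\ast}$ commutes with filtered colimits, the vanishing in degrees $\geq1$ follows from Proposition~\ref{prop: vani fin} reduced modulo $\mathfrak m$ (whose proof indeed gives the statement at each finite level) together with the long exact sequences attached to the finite filtrations of the $\mathcal G^{(h)}(-D)$. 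One then bootstraps to all $n$ by induction, using the short exact sequences $0\to\underline{\mathfrak w}^{\dagger\chi^\circ}_{\underline v,\underline w,1}\xrightarrow{\varpi^n}\underline{\mathfrak w}^{\dagger\chi^\circ}_{\underline v,\underline w,n+1}\to\underline{\mathfrak w}^{\dagger\chi^\circ}_{\underline v,\underline w,n}\to0$ that come from the flatness of a formal Banach sheaf over $\mc O_K$, and passing to the inverse limit. I expect this to be the main obstacle: everything else is formal once the vanishing is in hand, but the vanishing itself genuinely requires combining the coherent dévissage of Proposition~\ref{prop: small ban}, the coherent vanishing of Proposition~\ref{prop: vani fin} (which ultimately rests on \cite[Theorem~8.2.1.2]{lan_ram}), the interchange of cohomology with filtered colimits, and the inductive passage through the $\mathfrak m$-adic filtration.

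Granting the vanishing, the remaining assertions are formal. Applying $\eta(\underline v)_\ast$ to the exact sequence $0\to\mc O_K\xrightarrow{\varpi}\mc O_K\to\mc O_K/\varpi\to0$ tensored with $\underline{\mathfrak w}^{\dagger\chi^\circ}_{\underline v,\underline w}(-D)$ and using the vanishing of $\R^1$ shows that $\eta(\underline v)_\ast\bigl(\underline{\mathfrak w}^{\dagger\chi^\circ}_{\underline v,\underline w}(-D)\bigr)$ has no $\varpi$-torsion, hence is flat over $\mc O_K$, and that its reduction modulo $\mathfrak m^n$ is $\eta(\underline v)_{n,\ast}\bigl(\underline{\mathfrak w}^{\dagger\chi^\circ}_{\underline v,\underline w,n}(-D)\bigr)$; thus the pushforward is $\mathfrak m$-adically complete with quasi-coherent, $\mc O_K/\mathfrak m^n$-flat reductions, and its Zariski-local orthonormalizable structure is inherited, via the same base change, from that of $\underline{\mathfrak w}^{\dagger\chi^\circ}_{\underline v,\underline w}$, so it is a formal Banach sheaf. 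For smallness one examines the reduction modulo $\mathfrak m$, which equals $\eta(\underline v)_{1,\ast}\bigl(\underline{\mathfrak w}^{\dagger\chi^\circ}_{\underline v,\underline w,1}(-D)\bigr)=\varinjlim_h\eta(\underline v)_{1,\ast}\bigl(\mathcal G^{(h)}(-D)\bigr)$: each term is coherent, being a proper pushforward, and, applying the vanishing of $\R^1\eta(\underline v)_{1,\ast}\mc O(-D)$ to the finite filtrations of $\mathcal G^{(h)}(-D)$, it is an iterated extension of copies of the coherent sheaf $\eta(\underline v)_{1,\ast}\mc O(-D)$. This exhibits the reduction modulo $\mathfrak m$ as a filtered inductive limit of coherent sheaves that are iterated extensions of a fixed coherent sheaf, which is the smallness condition of \cite[Definition~A.1.2.1]{AIP}, and concludes the proof.
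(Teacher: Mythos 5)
Your proposal is correct and follows essentially the same dévissage as the paper: you use the coherent filtration from Proposition~\ref{prop: small ban}, the coherent vanishing from Proposition~\ref{prop: vani fin}, the flatness-induced short exact sequences $0\to\star_1\to\star_{s}\to\star_{s-1}\to0$, and the commutation of proper pushforward with filtered colimits. The only cosmetic difference is that you organize the argument around the vanishing of $\R^{q}\eta(\underline v)_{\ast}$ for $q\geq 1$ (established level by level and then passed to the limit), while the paper reformulates the same computation as the base-change compatibility $j^{\ast}\eta(\underline v)_{s,\ast}\mc G(-D)_{s}=\eta(\underline v)_{s-1,\ast}\mc G(-D)_{s-1}$ and feeds it into \cite[Proposition~A.1.3.1]{AIP}, which is precisely the black box that encapsulates your final two paragraphs (flatness, completeness, and smallness of the pushforward). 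Since the surjectivity needed for that base-change statement is the $\R^{1}$-vanishing of the kernel $\mc G_{1,j}(-D)$, the two formulations are equivalent.
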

\begin{proof}
We prove the statement for $\eta(\underline v)_\ast (\underline{\mathfrak w}^{\dagger \chi^\circ}_{\underline v,\underline w}(-D))$, the case of families is similar. We use the notation of the proof of Proposition~\ref{prop: small ban}. It is enough to prove the proposition for $\eta(\underline v)_\ast (\mc G(-D))$. Let $s \geq 1$ be an integer and consider the commutative diagram
\[
\xymatrix{
\mathfrak{Y}(p^n)(\underline v)^{\tor}_{s-1} \ar[r]^i \ar[d]^{\eta(\underline v)_{s-1}} & \mathfrak{Y}(p^n)(\underline v)^{\tor}_s \ar[d]^{\eta(\underline v)_s} \\
\mathfrak{Y}(\underline v)^{\min}_{s-1} \ar[r]^j  & \mathfrak{Y}(\underline v)^{\min}_s
}
\]
where $i$ and $j$ are the natural closed immersions. Using Proposition~\ref{prop: vani fin} and \cite[Proposition~A.1.3.1]{AIP}, we see that it is enough to prove that
\[
j^\ast (\eta(\underline v)_{s,\ast} \mc G(-D)_s) = \eta(\underline v)_{s-1.\ast} \mc G(-D)_{s-1}.
\]
Similarly to $\mc G_1$, we can write $\mc G_s \cong \varinjlim_j \mc G_{s,j}$, where each $\mc G_{s,j}$ is a coherent sheaf. Since taking direct images commutes with direct limits, we can prove the statement for $\mc G_{s,j}$. From now on, all the sheaves will be considered as sheaves over $\mathfrak{Y}(p^n)(\underline v)^{\tor}_s$. It is enough to prove that 
\[
\R^1 \eta(\underline v)_{s, \ast} \ker(\mc G_{s,j} \to \mc G_{s-1,j}) = 0.
\]
We have (see \cite[Section~8.1.6]{AIP} and the proof of Proposition~\ref{prop: small ban})
\[
\ker(\mc G_{s,j} \to \mc G_{s-1,j}) \cong \mc G_{1,j},
\]
in particular we need to show that $\R^1 \eta(\underline v)_{s, \ast} \mc G_{1,j} = 0$. By Proposition~\ref{prop: small ban}, we know that $\mc G_{1,j}$ is an extension of the trivial sheaf, so we can conclude by Proposition~\ref{prop: vani fin}.
\end{proof}
\subsection{Projectivity of the space of cuspidal forms} \label{subsec: proj}
We fix $\mathfrak U^\circ \subset \mathfrak W(\underline w)^\circ$ an affine with rigid fiber $\mc U^\circ = \Spm(A^\circ)$ and $\chi^\circ \in \mc U^\circ(K)$. Let us consider the modules $\cusp^{\mc U^\circ}_{\underline v,\underline w}$ and $\cusp^{\chi^\circ}_{\underline v,\underline w}$ defined by
\[
\cusp^{\mc U^\circ}_{\underline v,\underline w} \colonequals \Homol^0(\mathfrak Y(p^n)(\underline v)^{\tor} \times \mathfrak U^\circ, \underline{\mathfrak w}^{\dagger \chi_{\mathfrak U^\circ}^{\un}}_{\underline v,\underline w} (-D))[p^{-1}]
\]
and
\[
\cusp^{\chi^\circ}_{\underline v,\underline w} \colonequals \Homol^0(\mathfrak Y(p^n)(\underline v)^{\tor}, \underline{\mathfrak w}^{\dagger \chi^\circ}_{\underline v,\underline w}(-D))[p^{-1}].
\]
Let $B$ be any affinoid $K$-algebra and let $M$ be a Banach $B$-module. We recall the following definition, due to Buzzard in \cite{buzz_eigen}.
\begin{defi} \label{defi: proj}
We say that $M$ is \emph{projective}, if there is a Banach $B$-module $N$ such that $M \oplus N$ is potentially orthonormizable.
\end{defi}
\begin{prop} \label{prop: proj dev}
The Banach module $\cusp^{\mc U^\circ}_{\underline v,\underline w}$ is a projective $A^\circ$-module. Moreover, the natural specialization morphism
\[
\cusp^{\mc U^\circ}_{\underline v,\underline w} \to \cusp^{\chi^\circ}_{\underline v,\underline w}
\]
is surjective.
\end{prop}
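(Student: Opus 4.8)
The plan is to push everything forward along $\eta(\underline v)$ to the minimal compactification and then to invoke the formalism of small formal Banach sheaves from the appendix of \cite{AIP}. Write
\[
\mc G^{\chi^\circ}\colonequals \eta(\underline v)_\ast\bigl(\underline{\mathfrak w}^{\dagger\chi^\circ}_{\underline v,\underline w}(-D)\bigr),\qquad
\mc G^{\mc U^\circ}\colonequals (\eta(\underline v)\times\operatorname{id})_\ast\bigl(\underline{\mathfrak w}^{\dagger\chi_{\mathfrak U^\circ}^{\un}}_{\underline v,\underline w}(-D)\bigr),
\]
sheaves on $\mathfrak Y(\underline v)^{\min}$ and on $\mathfrak Y(\underline v)^{\min}\times\mathfrak U^\circ$ respectively. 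Since global sections factor through $\eta(\underline v)$, one has tautologically $\cusp^{\chi^\circ}_{\underline v,\underline w}=\Homol^0(\mathfrak Y(\underline v)^{\min},\mc G^{\chi^\circ})[p^{-1}]$, and likewise $\cusp^{\mc U^\circ}_{\underline v,\underline w}=\Homol^0(\mathfrak Y(\underline v)^{\min}\times\mathfrak U^\circ,\mc G^{\mc U^\circ})[p^{-1}]$. By Proposition~\ref{prop: proj small ban} both $\mc G^{\chi^\circ}$ and $\mc G^{\mc U^\circ}$ are small formal Banach sheaves; and since the relevant higher direct images along $\eta(\underline v)$ vanish (Proposition~\ref{prop: vani fin}, used exactly as in the proof of Proposition~\ref{prop: proj small ban}), formation of $\mc G$ is compatible with the base change $\mathfrak U^\circ\rightsquigarrow\{\chi^\circ\}$, i.e.\ $\mc G^{\mc U^\circ}$ restricts to $\mc G^{\chi^\circ}$ over $\chi^\circ$.

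Next I would record the one genuinely geometric input: the special fibre of $\mathfrak Y(\underline v)^{\min}$ is affine, which is precisely why one passes to the minimal compactification rather than the toroidal one. By \cite[Theorem~7.2.4.1]{lan} the Hodge line bundle $\det(\underline\omega)=\bigotimes_i\det(\underline\omega_i)$ is ample on $Y^{\min}$, so the ordinary locus of the special fibre of $\mathfrak Y^{\min}$ — the non-vanishing locus of a power of the Hasse invariants, hence of a section of an ample bundle on a projective scheme — is affine; and the local description of $\mathfrak Y(\underline v)^{\min}$ recalled in Section~\ref{sec: PEL data} shows that its special fibre is finite (indeed affine) over this ordinary locus, compare \cite[\S~8.2]{AIP}. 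Consequently the special fibre of $\mathfrak Y(\underline v)^{\min}\times\mathfrak U^\circ$ is affine as well.

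Given these two facts, the statement follows from the appendix of \cite{AIP}: over a formal scheme with affine special fibre the higher cohomology of a small formal Banach sheaf vanishes, $\Homol^0(-)[p^{-1}]$ of such a sheaf is an orthonormalizable Banach module over the ring of global functions (inverted at $p$) — in particular projective in the sense of Definition~\ref{defi: proj} — and this $\Homol^0$ commutes with base change. Applied to $\mc G^{\mc U^\circ}$ and $\mc G^{\chi^\circ}$ this yields that $\cusp^{\mc U^\circ}_{\underline v,\underline w}$ is projective over $A^\circ$ and that $\cusp^{\mc U^\circ}_{\underline v,\underline w}\,\widehat{\otimes}_{A^\circ}K\cong\cusp^{\chi^\circ}_{\underline v,\underline w}$, the tensor product being taken along the evaluation $A^\circ\to K$ at $\chi^\circ$. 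Surjectivity of the specialisation morphism is then formal: fixing an orthonormal basis of $\cusp^{\mc U^\circ}_{\underline v,\underline w}$ over $A^\circ$, an element of $\cusp^{\chi^\circ}_{\underline v,\underline w}$ has coordinates in $K$ tending to $0$, and since $A^\circ\to K$ is a surjective open map one lifts these coordinates to a null sequence in $A^\circ$, producing a preimage. The only real obstacle is the careful bookkeeping needed to check the hypotheses of \cite[Appendix~A]{AIP} — smallness of the sheaves, affineness of the base, and base-change compatibility — all of which has been prepared above; the argument is the exact analogue of the Siegel case treated in \cite[\S~8.2]{AIP}.
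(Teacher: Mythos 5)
Your proof and the paper's take essentially the same route: push the sheaf $\underline{\mathfrak w}^{\dagger\chi}_{\underline v,\underline w}(-D)$ (resp.\ its family version) forward to the minimal compactification, observe it is a small formal Banach sheaf (Proposition~\ref{prop: proj small ban}), and then invoke the affineness of the target together with the appendix of \cite{AIP} to get orthonormalizability, projectivity and base-change compatibility; the paper just compresses this into ``proceed as in \cite[Corollary~8.2.3.2]{AIP}.'' Two small points of drift. First, the paper explicitly reduces to the case where all the $v_i$ are equal before claiming $\mathfrak Y(\underline v)^{\min,\rig}$ is an affinoid, and that reduction is not cosmetic: there are several Hasse invariants $\Ha_i$ here, each a section of $\det(\underline\omega_i)^{\otimes(p-1)}$, and only the tensor product $\bigotimes_i\det(\underline\omega_i)$ is known to be ample on $Y^{\min}$; so the ampleness argument you sketch really bites on the product $\prod_i\Ha_i$, which is what makes the ``equal $v_i$'' normalisation useful. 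Second, the phrase ``its special fibre is finite (indeed affine) over this ordinary locus'' is not quite right: the special fibre of $\mathfrak Y(\underline v)^{\min}$ maps to the special fibre of $\mathfrak Y^{\min}$ and its image is a strict neighbourhood of (not contained in) the ordinary locus; what one actually needs, and what is established in \cite[\S~8.2]{AIP}, is that $\mathfrak Y(\underline v)^{\min}$ is an affine admissible formal scheme. Neither of these affects the overall strategy, which matches the paper's.
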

\begin{proof}
Taking the $v_i$'s all equal, we can assume that $\mathfrak{Y}(\underline v)^{\min,\rig}$ is an affinoid. Since $(\eta(\underline v \times \operatorname{id}))_\ast (\underline{\mathfrak w}^{\dagger \chi_{\mathfrak U^\circ}^{\un}}_{\underline v,\underline w} (-D))$ is a small Banach sheaf by Proposition~\ref{prop: proj small ban}, the proposition is proved exactly as \cite[Corollary~8.2.3.2]{AIP}.
\end{proof}
\begin{coro} \label{coro: proj}
Let $\mc U =\Spm(A) \subset \mc W$ be an affinoid, and let $\chi \in \mc U(K)$. The Banach module $\cusp^{\dagger\mc U}_{\underline v, \underline w}$ is a projective $A$-module. Moreover, the natural specialization morphism
\[
\cusp^{\dagger\mc U}_{\underline v, \underline w} \to \cusp^{\dagger\chi}_{\underline v, \underline w}
\]
is surjective.
\end{coro}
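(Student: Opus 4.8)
The plan is to deduce the corollary from Proposition~\ref{prop: proj dev} by running the dévissage of Subsection~\ref{subsec: dev} in reverse, exactly as the analogous statement is obtained in \cite{AIP}.

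\emph{Reduction to $\mc U \subset \mc W(\underline w)$.} Since the $\mc W(\underline w)$ form an admissible covering of $\mc W$ (Proposition~\ref{prop: cov we sp}) and are nested as $\underline w$ grows, and $\mc U$ is quasi-compact, $\mc U$ is contained in one of them; enlarging $\underline w$ we may also assume that all the conditions imposed in the previous sections on $\underline w$ relative to $\underline v$ and $n$ hold. The restriction-of-characters map identifies $\mc W(\underline w)$ with a disjoint union of copies of $\mc W(\underline w)^\circ$, indexed by the characters of the finite group $\Gamma \colonequals \T^{\mc O}(\Z_p)/\mathfrak{T}_{\underline w}^{\mc O}(\Z_p)$ (equivalently, in the formulation of Remark~\ref{rmk: dev is dev}, by the characters of $\B^{\mc O}(\Z/p^n\Z)$, which factor through $\T^{\mc O}(\Z/p^n\Z)$). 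Replacing $\mc U$ by one of its connected components, we may therefore identify it with an affinoid $\mc U^\circ \subset \mc W(\underline w)^\circ$, with $A \cong A^\circ$, in such a way that $\chi^{\un}_{\mc U}$ equals $\chi^{\un}_{\mc U^\circ}$ times a fixed character $\gamma$ of $\Gamma$; under this identification $\chi \in \mc U(K)$ corresponds to its image $\chi^\circ \in \mc U^\circ(K)$.

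\emph{Isotypic descent.} By Remark~\ref{rmk: dev is dev}, after pushing forward to $\mathfrak Y_{\Iw}(p)(\underline v)^{\tor}$ and performing the indicated twist, the family $\underline{\mathfrak w}^{\dagger\chi^{\un}_{\mc U^\circ}}_{\underline v,\underline w}(-D)$ carries an action of $\Gamma$ whose $\gamma$-isotypic part recovers $\underline\omega^{\dagger\chi^{\un}_{\mc U}}_{\underline v,\underline w}(-D)$, and likewise at each point $\chi^\circ$. Taking global cuspidal sections over $\mathfrak Y(p^n)(\underline v)^{\tor}$ (resp.\ $\times\,\mathfrak U^\circ$) and inverting $p$, this exhibits $\cusp^{\dagger\mc U}_{\underline v,\underline w}$ as the image of the idempotent $e_\gamma \colonequals |\Gamma|^{-1}\sum_{\delta\in\Gamma}\gamma(\delta)^{-1}\delta$ acting $A$-linearly on $\cusp^{\mc U^\circ}_{\underline v,\underline w}$, and similarly $\cusp^{\dagger\chi}_{\underline v,\underline w} = e_\gamma\,\cusp^{\chi^\circ}_{\underline v,\underline w}$. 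Since $A$ is a $K$-algebra and $K$ has characteristic $0$, the integer $|\Gamma|$ is invertible in $A$, so $e_\gamma$ is a genuine continuous idempotent and $\cusp^{\dagger\mc U}_{\underline v,\underline w}$ is a topological $A$-module direct summand of $\cusp^{\mc U^\circ}_{\underline v,\underline w}$. The latter is a projective $A^\circ\cong A$-module by Proposition~\ref{prop: proj dev}, and a direct summand of a projective Banach module is again projective (if $M\oplus N$ is potentially orthonormizable and $M=M_1\oplus M_2$, then $M_1\oplus(M_2\oplus N)$ is), which gives the first assertion. For the second, $e_\gamma$ commutes with the specialization morphism, so applying it to the surjection of Proposition~\ref{prop: proj dev} and using $e_\gamma\,\cusp^{\chi^\circ}_{\underline v,\underline w}=\cusp^{\dagger\chi}_{\underline v,\underline w}$ yields the surjectivity of $\cusp^{\dagger\mc U}_{\underline v,\underline w}\to\cusp^{\dagger\chi}_{\underline v,\underline w}$.

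The only step that requires genuine care is the identification in the second paragraph, i.e.\ checking that the $\Gamma$-action produced by Remark~\ref{rmk: dev is dev} is the correct one and that its $\gamma$-component is precisely $\cusp^{\dagger\mc U}_{\underline v,\underline w}$, compatibly with specialization; I expect this bookkeeping to be the main (though modest) obstacle, and it is carried out exactly as in \cite{AIP}. Everything else is formal, the crucial point being that the descent from $\mc W(\underline w)^\circ$ to $\mc W(\underline w)$ takes place rationally, where the finite group $\Gamma$ has order invertible and the dévissage becomes a plain isotypic decomposition.
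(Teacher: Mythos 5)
Your argument follows the same route as the paper: invoke Remark~\ref{rmk: dev is dev} to realize $\cusp^{\dagger\mc U}_{\underline v,\underline w}$ and $\cusp^{\dagger\chi}_{\underline v,\underline w}$ as a direct summand (isotypic piece under a finite group, which is equivalent in characteristic $0$ to the paper's ``take invariants'') of $\cusp^{\mc U^\circ}_{\underline v,\underline w}$, respectively $\cusp^{\chi^\circ}_{\underline v,\underline w}$, and then transport projectivity and surjectivity across this summand from Proposition~\ref{prop: proj dev}. The only cosmetic differences are that the paper phrases the descent as taking $\B^{\mc O}(\Z/p^n\Z)$-invariants after a twist rather than as an explicit averaging idempotent over $\Gamma$, and it handles non-connected $\mc U$ via the base change $\otimes_{A^\circ}A$ rather than your reduction to a connected component; the substance is the same.
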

\begin{proof}
Let $\Spf(A^\circ)$ be a formal model of $\mc U^\circ$, the image of $\mc U$ in $\mathfrak W(\underline w)^{\rig}$ and let $\cusp^{\mc U^\circ}_{\underline v,\underline w}$ be as above. Let $\chi^{\un}$ and $\chi^{\un,\circ}$ be the characters associated to $\mc U$ and $\mc U^\circ$. By Remark~\ref{rmk: dev is dev}, we have
\[
\cusp^{\dagger\mc U}_{\underline v, \underline w} = (\cusp^{\mc U^\circ}_{\underline v,\underline w} \otimes_{A^\circ} A(-\chi^{\un,\circ '}))^{\B^{\mc O}(\Z/p^n \Z)},
\]
in particular, by Proposition~\ref{prop: proj dev}, we have that $\cusp^{\dagger\mc U}_{\underline v, \underline w}$ is a direct factor of a projective $A$-module, and hence it is projective. An analogous equality holds for $\cusp^{\dagger\chi}_{\underline v, \underline w}$ and the specialization morphism $\cusp^{\mc U^\circ}_{\underline v,\underline w} \to \cusp^{\chi^\circ}_{\underline v,\underline w}$ is surjective by Proposition~\ref{prop: proj small ban}. Since taking invariants with respect to a finite group is an exact functor on the category of $\Q_p$-vector spaces, the morphism $\cusp^{\dagger\mc U}_{\underline v, \underline w} \to \cusp^{\dagger\chi}_{\underline v, \underline w}$ is surjective as required.
\end{proof}
\subsection{The Eigenvariety} \label{subsec: eigen}
Let $\mc Z(\underline v, \underline w)\subseteq \mc W(\underline w) \times \m A^{1,\rig}$ be the spectral variety associated to the $\U$-operator acting on $\cusp^{\dagger \mc W(\underline w)}_{\underline v, \underline w}$.
\begin{teo} \label{teo: eigen}
There is a rigid space $\mc E_{\underline v, \underline w}$ equipped with a finite morphism $\mc E_{\underline v, \underline w} \to \mc Z(\underline v, \underline w)$ that satisfies the following properties.
\begin{enumerate}
 \item \label{en: basic propr} It is equidimensional of dimension $\rk(\T^{\mc O})$. The fiber of $\mc E_{\underline v, \underline w}$ above a point $\chi \in \mc W(\underline w)$ parametrizes systems of eigenvalues for the Hecke algebra $\m T$ appearing in $\cusp^{\dagger\chi}_{\underline v,\underline w}$ that are of finite slope for the $\U$-operator. If $x \in \mc E_{\underline v, \underline w}$, then the inverse of the $\U$-eigenvalue corresponding to $x$ is $\pi_2(x)$, where $\pi_2$ is the induced map $\pi_2 \colon \mc E_{\underline v, \underline w} \to \m A^{1,\rig}$. For various $\underline v$ and $\underline w$, these construction are compatible. Letting $\underline v \to 0$ and $\underline w \to \infty$ we obtain the global eigenvariety $\mc E$.
 \item \label{en: unr} Let $f \in \cusp^{\dagger\chi}_{\underline v,\underline w}$ be a cuspidal eigenform of finite slope for the $\U$-operator and let $x_f$ be the point of $\mc E_{\underline v, \underline w}$ corresponding to $f$. Let us suppose that $\mc E_{\underline v, \underline w} \to \mc W(\underline w)$ is unramified at $x_f$. Then there exists an affinoid $\mc U \subset \mc W(\underline w)$ that contains $\chi$ and such that $f$ can be deformed to a family of finite slope eigenforms $F \in \cusp^{\dagger\mc U}_{\underline v,\underline w}$.
\end{enumerate}
\end{teo}
\begin{proof}
By Corollary~\ref{coro: proj} all the assumptions of \cite{buzz_eigen} are satisfied, we can then use Buzzard's machinery to construct the eigenvariety, this gives $\mc E_{\underline v, \underline w}$ and \eqref{en: basic propr} follows. Point \eqref{en: unr} is an automatic consequence of the way we used to construct the eigenvariety, see \cite[Proposition~8.1.2.6]{AIP}.
\end{proof}
\begin{coro} \label{coro: fin}
Let $f \in \cusp^{\dagger\chi}_{\underline v,\underline w}$ be a cuspidal eigenform of finite slope for the $\U$-operator. Then there exists an affinoid $\mc U \subset \mc W(\underline w)$ that contains $\chi$ and such that the system of eigenvalues associated to $f$ can be deformed to a family of systems of eigenvalues appearing in $\cusp^{\dagger\mc U}_{\underline v,\underline w}$.
\end{coro}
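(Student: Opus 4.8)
The plan is to deduce Corollary~\ref{coro: fin} directly from Theorem~\ref{teo: eigen}\eqref{en: basic propr}, the only real input being the local finiteness over weight space of the morphism $\mc E_{\underline v, \underline w} \to \mc W(\underline w)$. In contrast with part~\eqref{en: unr}, no unramifiedness hypothesis will be needed, because we ask only for a family of \emph{systems of eigenvalues}, not of eigenforms, and the eigenvariety parametrizes eigensystems by construction. Let $x_f \in \mc E_{\underline v, \underline w}$ be the point attached to $f$, lying above $\chi \in \mc W(\underline w)$, and let $z_f \in \mc Z(\underline v, \underline w)$ be its image in the spectral variety.

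First I would recall the shape of Buzzard's construction in \cite{buzz_eigen}. Starting from the Banach $\mc O_{\mc W(\underline w)}(\mc W(\underline w))$-module $\cusp^{\dagger \mc W(\underline w)}_{\underline v, \underline w}$, which is projective by Corollary~\ref{coro: proj}, together with the completely continuous operator $\U$ and the commuting action of $\m T$, one factors the characteristic Fredholm series of $\U$ locally on $\mc Z(\underline v, \underline w)$. Concretely, around $z_f$ one can choose an admissible affinoid open $\mc Z_0 \subseteq \mc Z(\underline v, \underline w)$, finite onto an affinoid $\mc U = \Spm(A) \subseteq \mc W(\underline w)$ containing $\chi$, which is open and closed in the preimage of $\mc U$; this is the standard Coleman--Buzzard ``slope decomposition'' step. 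Over $\mc Z_0$ the corresponding direct summand of $\cusp^{\dagger \mc U}_{\underline v, \underline w}$ is a finite projective $A$-module on which $\m T$ acts, and $\mc E_0 \colonequals \mc E_{\underline v, \underline w} \times_{\mc Z(\underline v, \underline w)} \mc Z_0$ is the relative $\Spec$ of the image of $\m T$ in its endomorphisms; here one uses that the formation of $\cusp^{\dagger(-)}_{\underline v, \underline w}$ is compatible with the base change $\mc U \hookrightarrow \mc W(\underline w)$, which underlies Proposition~\ref{prop: families} and Corollary~\ref{coro: proj}.

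Then I would set $T \colonequals \mc O_{\mc E_0}(\mc E_0)$, a finite $A$-algebra, equipped with the tautological ring homomorphism $\lambda \colon \m T \to T$. By construction $\lambda$ is a family of systems of Hecke eigenvalues appearing in $\cusp^{\dagger \mc U}_{\underline v, \underline w}$, parametrized by $\Spm(T)$, which is finite over $\mc U$; and by Theorem~\ref{teo: eigen}\eqref{en: basic propr} its specialization at $x_f$ is exactly the system of eigenvalues attached to $f$. This is the required deformation, so the corollary follows. I do not expect a genuine obstacle here: the statement is essentially a repackaging of Theorem~\ref{teo: eigen}. The one point deserving care is the base-change compatibility invoked above, namely that restricting the cuspidal Banach sheaf and its global sections from $\mc W(\underline w)$ to $\mc U$ is compatible with the slope decomposition; this is where the projectivity of Corollary~\ref{coro: proj} is used, and it is the same mechanism as in \cite[\S~8]{AIP}. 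It is also the reason the conclusion is weaker than Theorem~\ref{teo: eigen}\eqref{en: unr}: without unramifiedness at $x_f$ one cannot split off a rank-one $A$-module summand of $\cusp^{\dagger \mc U}_{\underline v, \underline w}$ realizing $f$ itself, only the finite algebra $T$ carrying the eigensystems.
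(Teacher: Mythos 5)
Your argument is correct and matches the paper's approach: the paper gives no separate proof of Corollary~\ref{coro: fin}, treating it as an immediate consequence of Theorem~\ref{teo: eigen}, and your write-up is a faithful unpacking of exactly that — the local finiteness of $\mc E_{\underline v, \underline w} \to \mc W(\underline w)$ coming from Buzzard's slope-decomposition machinery, applied to the projective module of Corollary~\ref{coro: proj}, yields an affinoid $\mc U \ni \chi$ and a finite $\mc O_{\mc U}(\mc U)$-algebra $T$ through which the Hecke action factors, whose spectrum carries $x_f$. You also correctly identify why no unramifiedness hypothesis is needed here, in contrast with Theorem~\ref{teo: eigen}\eqref{en: unr}: the corollary asks only for a family of eigensystems, not a family of eigenforms, so one never needs to extract a rank-one eigendirection out of the finite projective summand.
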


\section{Classicity results} \label{sec: class}
In this section we prove certain classicity results for our modular forms. Since the results of \cite{BGG} hold in general, there are no conceptual difficulties to generalize \cite{AIP} to our setting, so we will sometimes only sketch the arguments.

Recall that at the end of Section~\ref{sec: PEL data} we have defined the algebraic group $\GL^{\mc O}$ with maximal split torus $\T^{\mc O}$. We also have the Borel subgroup $\B^{\mc O}$ with unipotent radical $\U^{\mc O}$.

Let $\chi \in X^\ast(\T^{\mc O})^+$ be a (classical) dominant weight, that we see as a character of $\B^{\mc O}$ trivial on $\U^{\mc O}$. We set
\begin{gather*}
V_\chi \colonequals \{f \colon \GL^{\mc O} \to \mathds A^1 \mbox{ morphisms of schemes such that, for all } \Z_p-\mbox{algebras } R,\\
\mbox{ we have } f(gb)=\chi(b)f(g) \mbox{ for all } (g,b) \in (\GL^{\mc O} \times \B^{\mc O})(R)\}.
\end{gather*}
There is a left action of $\GL^{\mc O}$ on $V_\chi$ given by $(g \cdot f)(x) = f(g^{-1}x)$.

Recall the sheaf $\underline \omega^\chi$ on $Y_{\Iw}(p)$ of classical modular forms. We have the toroidal compactification $Y_{\Iw}(p)^{\tor}$ of $Y_{\Iw}(p)$ defined in \cite{ben_these}. We will work only with the generic fiber $\mathfrak Y_{\Iw}(p)^{\tor,\rig}$ of its formal completion $\mathfrak Y_{\Iw}(p)^{\tor}$. We have that $\mathfrak Y_{\Iw}(p)(\underline v)^{\tor,\rig}$ is an open subspace of $\mathfrak Y_{\Iw}(p)^{\tor,\rig}$. Clearly $\underline \omega^\chi$ extends to a sheaf on $Y_{\Iw}(p)^{\tor}$.

We have the following
\begin{prop} \label{prop: omega repr class}
Locally for the étale topology on $Y_{\Iw}(p)^{\tor,\rig}$ the sheaf $\underline \omega^\chi$ is isomorphic to $V_{\chi',K}$. This isomorphism respects the action of $\GL^{\mc O}$.
\end{prop}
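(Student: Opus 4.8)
The plan is to unwind the definition of $\underline\omega^\chi$ and recognise it as the vector bundle associated to the $\GL^{\mc O}$-torsor $\mc T$ and the representation $V_{\chi'}$. Recall that $f\colon\mc T\to Y_{\Iw}(p)^{\tor}$ is the $\GL^{\mc O}$-torsor of trivializations of $\underline\omega$, that $\GL^{\mc O}$ acts on $\mc T$ by $g\cdot\omega=\omega\circ g^{-1}$, and that by construction $\underline\omega^\chi$ is the subsheaf of $f_\ast\mc O_{\mc T}$ of sections homogeneous of degree $\chi'$ for the induced $\B^{\mc O}$-action. First I would check that, for such a section $s$ and a local section $\omega$ of $\mc T$, the rule sending a point $g$ of $\GL^{\mc O}$ to $s(\omega\circ g)$ defines, by the very definition of $V_{\chi'}$, an element of $V_{\chi'}$, and that $\omega\mapsto(g\mapsto s(\omega\circ g))$ is $\GL^{\mc O}$-equivariant; this produces a canonical isomorphism $\underline\omega^\chi\cong\mc T\times^{\GL^{\mc O}}V_{\chi',K}$, where $\GL^{\mc O}$ acts on the right-hand side through its left action on $V_{\chi'}$. (Over the boundary of $Y_{\Iw}(p)^{\tor}$ the sheaf $\underline\omega$ is still locally free, so the same discussion applies there.)

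Granting this, the proposition becomes the assertion that the associated bundle $\mc T\times^{\GL^{\mc O}}V_{\chi',K}$ is, étale-locally on $Y_{\Iw}(p)^{\tor,\rig}$, isomorphic to the constant sheaf $V_{\chi',K}$ compatibly with the $\GL^{\mc O}$-action, and this is nothing but the local triviality of $\mc T$. Over $K$ the torus $\T^{\mc O}$ is split and each $\mc O_i\otimes_{\Z_p}K$ is a product of copies of $K$, so $\GL^{\mc O}_K$ is a product of Weil restrictions of general linear groups and, after the further splitting of the $\mc O_i\otimes_{\Z_p}K$, simply a product of general linear groups; hence $\mc T$ is trivial Zariski-locally, a fortiori étale-locally, on $Y_{\Iw}(p)^{\tor,\rig}$ by Hilbert~90. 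Fixing a trivialization $\mc T_{|U}\cong\GL^{\mc O}_U$ over an open $U$ and transporting it through the isomorphism above gives $\underline\omega^\chi_{|U}\cong V_{\chi',K}$; since the action of $\GL^{\mc O}$ on $\mc T$ pulls back to left translation on $\GL^{\mc O}_U$, this isomorphism intertwines the induced action of $\GL^{\mc O}$ with the left action $(g\cdot f)(x)=f(g^{-1}x)$ on $V_{\chi'}$. Passing to another trivialization, which differs by some $h\in\GL^{\mc O}(U)$, composes the isomorphism with the action of $h$ on $V_{\chi',K}$; this is the precise sense in which the isomorphism respects the $\GL^{\mc O}$-action.

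I do not expect a serious obstacle: the content is entirely the standard local triviality of $\GL^{\mc O}$-torsors over $K$ together with the formal translation between $\chi'$-homogeneous functions on a $\GL^{\mc O}$-torsor and maps into the algebraic induction $V_{\chi'}$, and one can carry this out as the analogous statement in \cite[\S~5.3]{AIP}. The only points requiring care are bookkeeping ones: keeping the left $\GL^{\mc O}$-action and the right $\B^{\mc O}$-action on $\mc T$ straight, and matching conventions so that the relevant weight is $\chi'=-w_0\chi$ rather than $\chi$ (which is already built into the definition of $\underline\omega^\chi$). As a byproduct one obtains that $\underline\omega^\chi$ is a locally free $\mc O_{Y_{\Iw}(p)^{\tor,\rig}}$-module of rank $\dim_K V_{\chi'}$.
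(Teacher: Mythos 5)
Your argument is correct and matches what the paper takes for granted (the proposition is stated without proof, being the standard identification of the associated bundle $\mc T\times^{\GL^{\mc O}}V_{\chi',K}$ with the sheaf of $\chi'$-homogeneous functions on the torsor, cf.\ \cite[\S 5.3]{AIP}). Two small remarks: the invocation of Hilbert~90 is harmless but overkill, since $\mc T$ is by construction the torsor of $\mc O_B\otimes$-linear trivializations of a locally free sheaf of $\mc O_B\otimes_{\Z_p}\mc O_{Y_{\Iw}(p)^{\tor}}$-modules (the unramifiedness hypothesis makes $\underline\omega_i^\pm$ locally free over $\mc O_i\otimes_{\Z_p}\mc O$), so Zariski-local triviality is immediate without any cohomological input; and it is worth being slightly more careful in checking the homogeneity convention, namely that for $s$ with $b\cdot s=\chi'(b)s$ and $\omega$ a local section of $\mc T$ one has $s(\omega\circ gb)=s(b^{-1}\cdot(\omega\circ g))=\chi'(b)\,s(\omega\circ g)$, so that $g\mapsto s(\omega\circ g)$ really lies in $V_{\chi'}$ as defined, and that replacing $\omega$ by $h\cdot\omega=\omega\circ h^{-1}$ replaces this function by its left translate by $h$ — which is exactly the equivariance asserted. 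With these bookkeeping checks made explicit your proof is complete.
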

We write $\B^{\mc O, \op}$ and $\U^{\mc O, \op}$ for the opposite subgroups of $\B^{\mc O, \op}$ and $\U^{\mc O, \op}$. Let $\I^{\mc O}$ be the Iwahori subgroup of $\GL^{\mc O}(\Z_p)$ given by matrices that are `upper triangular' modulo $p$ (recall that we are in the unramified case, so $p$ is a uniformizer of each $\mc O_i$) and let $\N^{\mc O, \op}$ be the subgroup of $\U^{\mc O, \op}(\Z_p)$ given by those matrices that reduce to the identity modulo $p$. We have an isomorphism of groups
\[
\N^{\mc O, \op} \times \B^{\mc O}(\Z_p) \to \I^{\mc O}
\]
given by Iwahori decomposition.

We use the following identification, in case (A) and (C) respectively.
\begin{gather*}
\N^{\mc O, \op} = \prod_{i=1}^k \left( p\mc O_i^{\frac{a_i^+(a_i^+-1)}{2}} \times p\mc O_i^{\frac{a_i^-(a_i^--1)}{2}} \right) \subset \prod_{i=1}^k \left( \mathds A^{\frac{a_i^+(a_i^+-1)}{2}, \rig} \times \mathds A^{\frac{a_i^-(a_i^--1)}{2}, \rig} \right),\\
\N^{\mc O, \op} = \prod_{i=1}^k p\mc O_i^{\frac{a_i(a_i-1)}{2}} \subset \prod_{i=1}^k \mathds A^{\frac{a_i(a_i-1)}{2}, \rig}.
\end{gather*}
Given $\underline w$, a tuple of positive real numbers as in the definition of $\mc W(\underline w)$, we define in case (A) and (C) respectively
\begin{gather*}
\N^{\mc O, \op}_{\underline w} \colonequals \bigcup_{(x_i^\pm) \in \N^{\mc O, \op}} \prod_{i=1}^k \left( B(x_i^+, p^{-w_i^+}) \times B(x_i^-, p^{-w_i^-}) \right),\\
\N^{\mc O, \op}_{\underline w} \colonequals \bigcup_{(x_i) \in \N^{\mc O, \op}} \prod_{i=1}^k B(x_i, p^{-w_i}),
\end{gather*}
where $B(x,p^{-w})$ is the ball of center $x$ and radius $p^{-w}$.

We say that a function $f \colon \N^{\mc O, \op} \to K$ is \emph{$\underline w$}-analytic if it is the restriction of an analytic function $f \colon \N^{\mc O, \op}_{\underline w} \to K$. Note that the extension of $f$, if it exists, is necessarily unique. We write $\mc F^{\underline w-\an}(\N^{\mc O, \op},K)$ for the set of $\underline w$-analytic functions. If $w_i^\pm = 1$ for all $i$ and $f$ is $\underline w$-analytic, we simply say that $f$ is analytic and we write $\mc F^{\an}(\N^{\mc O, \op},K)$ for the set of analytic functions. A function is \emph{locally analytic} if it is $\underline w$-analytic for some $\underline w$ and we write $\mc F^{\locan}(\N^{\mc O, \op},K)$ for the set of locally analytic functions.

Let now $\chi \in \mc W(\underline w)(K)$ be a $\underline w$-analytic character. We set
\begin{gather*}
V_\chi^{\underline w-\an} \colonequals \{f \colon \I^{\mc O} \to K \mbox{ such that } f(ib)=\chi(b)f(i)\\
\mbox{ for all } (i,b) \in \I^{\mc O} \times \B^{\mc O} \mbox{ and } f_{|\N^{\mc O, \op}_{\underline w}} \in \mc F^{\underline w-\an}(\N^{\mc O, \op},K)\}.
\end{gather*}
The definition of the spaces $V_\chi^{\an}$ and $V_\chi^{\locan}$ is similar. They all are representations of $\I^{\mc O}$ via $(i \star f)(x) = f(xi)$.

We have the following
\begin{prop} \label{prop: omega repr gen}
Locally for the étale topology on $\mathfrak Y_{\Iw}(p)^{\tor,\rig}$ the sheaf $\underline \omega^{\dagger\chi}_{\underline v,\underline w}$ is isomorphic to $V_{\chi'}^{\underline w-\an}$. This isomorphism respects the action of $\I^{\mc O}$.
\end{prop}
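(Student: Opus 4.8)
This is the $\underline w$-analytic analogue of Proposition~\ref{prop: omega repr class}, and the plan is to follow the argument of \cite{AIP} (the Siegel case) essentially verbatim: since $p$ is unramified in $\mc O_B$, the only new feature is the bookkeeping with the factors indexed by $i=1,\dots,k$, the embeddings $\mc O_i\hookrightarrow K$, and the $\pm$ decorations. Concretely, the aim is to compute the sheaf $\underline\omega^{\dagger\chi}_{\underline v,\underline w}=\pi_\ast\mc O_{\Iwtform_{\underline w}}[\chi']$ \'etale-locally by making the geometry of $\Iwtform_{\underline w}\to\mathfrak Y_{\Iw}(p)(\underline v)^{\tor,\rig}$ completely explicit, and then reading off both the underlying sheaf and the $\I^{\mc O}$-equivariance.

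First I would set up the \'etale-local charts. Recall that $\mathfrak Y_{\Iw}(p)(\underline v)^{\tor,\rig}$ is an open of $\mathfrak Y_{\Iw}(p)^{\tor,\rig}$, so it suffices to work over an \'etale chart $\Spf R$ ($R\in\Nadm$) of the former. Refining if necessary, I would assume that on $\Spf R$ the locally free $\mc O_B\otimes\mc O$-module $\mc F$ is free (this makes sense because $p$ is unramified, so each $\mc O_i\otimes_{\Z_p}R$ is a product of copies of $R$ indexed by the embeddings), that a basis $f_\bullet^\pm$ of each $\mc F_i^\pm(R)$ adapted modulo a small power of $p$ to the canonical Iwahori filtration of $H_{i,1}^\pm$ is chosen, and that the finite \'etale cover $\mathfrak Y(p^n)(\underline v)^{\tor,\rig}\to\mathfrak Y_{\Iw}(p)(\underline v)^{\tor,\rig}$ splits; the latter cover is a $\B^{\mc O}(\Z/p^n\Z)$-torsor, so over the chart it becomes $\coprod_{\B^{\mc O}(\Z/p^n\Z)}\Spf R$, and over each component the isomorphism of Proposition~\ref{prop: F loc free} (induced by $\HT^\pm_{i,w_i^\pm}$ and the chosen trivialization of $(H^\pm_{i,n})^{\Dual}$) supplies a ``reference'' basis $e_\bullet^\pm$ of $\mc F_i^\pm$ modulo $p^{w_i^\pm}$. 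This is where Proposition~\ref{prop: F loc free} enters.

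Next I would identify $\Iwtform_{\underline w}$ over such a chart with a neighbourhood of the Iwahori $\I^{\mc O}$ inside $\GL^{\mc O}$. Unwinding the definition in Subsection~\ref{subsec: mod sheaves}, an $R$-point of $\Iwtform_{\underline w}$ is a filtration of $\mc F(R)$ by free $\mc O\otimes R$-submodules together with trivializations of its graded pieces, congruent modulo $p^{\underline w}$, via the Hodge--Tate isomorphism, to the reference data $e_\bullet^\pm$. Writing such a datum in the fixed basis $f_\bullet^\pm$ and using the Iwahori decomposition $\N^{\mc O, \op}\times\B^{\mc O}(\Z_p)\to\I^{\mc O}$, one checks that the set of such data is functorially identified with $\I^{\mc O}\cdot\mathfrak B^{\mc O}_{\underline w}(R)$ modulo the right action of $\U^{\mc O}(\Z_p)$, equivariantly for the right action of $\B^{\mc O}(\Z_p)\mathfrak B^{\mc O}_{\underline w}$ which moves the trivializations of the graded pieces; equivalently $\Iwtform_{\underline w}|_{\Spf R}$ is, on each component of the cover, a polydisc swept out by $\N^{\mc O, \op}_{\underline w}$. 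Taking $[\chi']$-homogeneous sections therefore yields the $\underline w$-analytic functions on $\I^{\mc O}\cdot\mathfrak B^{\mc O}_{\underline w}$ transforming by $\chi'$ on the right under $\B^{\mc O}(\Z_p)\mathfrak B^{\mc O}_{\underline w}$, which by restriction to $\N^{\mc O, \op}_{\underline w}$ is exactly $V_{\chi'}^{\underline w-\an}$ by definition. Finally I would trace the residual action of $\I^{\mc O}=\N^{\mc O, \op}\cdot\B^{\mc O}(\Z_p)$ — coming from the Galois action of $\B^{\mc O}(\Z/p^n\Z)$ on the cover together with the action changing the reference basis within the $\GL^{\mc O}(\Z/p^n\Z)$-tower — and check that under the isomorphism it becomes the representation $(i\star f)(x)=f(xi)$ on $V_{\chi'}^{\underline w-\an}$; equivalently, one shows that $\underline\omega^{\dagger\chi}_{\underline v,\underline w}$ is obtained from a natural $\I^{\mc O}$-(pseudo-)torsor over $\mathfrak Y_{\Iw}(p)(\underline v)^{\tor,\rig}$ by the associated-sheaf construction with fibre $V_{\chi'}^{\underline w-\an}$.

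The main obstacle is precisely the verification in the previous paragraph: matching, \emph{with the correct radii} $w_i^\pm$, the congruence conditions that define $\Iwtform_{\underline w}$ (a filtration plus graded trivializations of $\mc F$, congruent modulo $p^{w_i^\pm}$ via the Hodge--Tate isomorphism of Proposition~\ref{prop: F loc free}) to the statement ``the corresponding change-of-basis matrix lies in $\I^{\mc O}\cdot\mathfrak B^{\mc O}_{\underline w}$'', and then checking that the descent from the $\mathfrak Y(p^n)$-level trivialization down to $\mathfrak Y_{\Iw}(p)^{\tor,\rig}$ intertwines the Galois action with the $\B^{\mc O}(\Z/p^n\Z)$-part of the $\I^{\mc O}$-action. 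This is exactly the computation carried out in \cite{AIP}; the geometric input in our setting is the open immersion $\Iwtform_{\underline w}^{\rig}\hookrightarrow\bigl(\mc T_K^{\an}/\U^{\mc O}_{Y_{\Iw}(p)_K^{\tor,\an}}\bigr)_{\mathfrak Y_{\Iw}(p)(\underline v)^{\tor,\rig}}$ established in the course of Proposition~\ref{prop: class mod forms}, which reduces everything to the linear-algebra identification above, and once $p$ is assumed unramified no new difficulty arises beyond the extra indices.
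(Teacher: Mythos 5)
You've correctly identified everything that goes into the statement: the paper itself states Proposition~\ref{prop: omega repr gen} with no written proof (it is introduced with ``We have the following'' and taken as a direct consequence of the constructions in Subsections~\ref{subsec: sheaf F}--\ref{subsec: mod sheaves}, in analogy with \cite{AIP}), so your task was to reconstruct the implicit argument, which you do. Your plan — pass to an étale chart where $\mc F$ is free (using that $\mc O_i\otimes_{\Z_p}R$ splits because $K$ is large enough), split the finite étale cover $\mathfrak Y(p^n)(\underline v)^{\tor,\rig}\to\mathfrak Y_{\Iw}(p)(\underline v)^{\tor,\rig}$, use Proposition~\ref{prop: F loc free} and a choice of basis to identify $\Iwtform_{\underline w}$ with a polydisc swept by $\N^{\mc O,\op}_{\underline w}$ modulo the residual action, extract $[\chi']$-homogeneous sections to land in $V_{\chi'}^{\underline w-\an}$, and finally trace the $\I^{\mc O}$-action through the Galois action on the cover and the change of reference basis — is exactly the \cite{AIP} computation transposed to the present bookkeeping of $i$, $\pm$, and embeddings, which is what the authors intend the reader to do. This is the right approach and there are no gaps of substance; the one point worth being slightly more careful about, if you were to write out full details, is that the Galois group of $\mathfrak Y(p^n)(\underline v)^{\tor,\rig}\to\mathfrak Y_{\Iw}(p)(\underline v)^{\tor,\rig}$ is the preimage of $\B^{\mc O}(\Z/p\Z)$ in $\GL^{\mc O}(\Z/p^n\Z)$ rather than $\B^{\mc O}(\Z/p^n\Z)$ on the nose (since the Iwahori datum downstairs only records a flag of $H_1$, not a flag of $H_n$), but this does not change the outcome: after the Iwahori decomposition and the quotient by $\U^{\mc O}(\Z_p)$, the residual action you obtain on the fibres is precisely the $(i\star f)(x)=f(xi)$ action of $\I^{\mc O}$ on $V_{\chi'}^{\underline w-\an}$.
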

If $\chi$ is a classical dominant weight, there is an obvious inclusion $V_\chi \hookrightarrow V_\chi^{\underline w-\an}$ and we have the following proposition (see \cite[Proposition~5.3.4]{AIP}).
\begin{prop} \label{prop: incl omega repr}
The open immersion of Proposition~\ref{prop: class mod forms} induces an inclusion of sheaves
\[
\underline \omega^\chi_{|\mathfrak Y_{\Iw}(p)^{\tor,\rig}} \hookrightarrow \underline \omega^{\dagger\chi}_{\underline v,\underline w}.
\]
Locally for the étale topology on $\mathfrak Y_{\Iw}(p)^{\tor,\rig}$ this inclusion is isomorphic to $V_{\chi'} \hookrightarrow V_{\chi'}^{\underline w-\an}$.
\end{prop}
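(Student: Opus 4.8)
The plan is to construct the morphism of sheaves by pulling torus-homogeneous functions back along the open immersion of Proposition~\ref{prop: class mod forms}, and then to read off its local shape by arranging the two étale-local trivialisations of Propositions~\ref{prop: omega repr class} and~\ref{prop: omega repr gen} to come from a single trivialisation of the $\GL^{\mc O}$-torsor; injectivity will then be formal.

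First I would recall that Proposition~\ref{prop: class mod forms} produces, over $\mathfrak Y_{\Iw}(p)(\underline v)^{\tor,\rig}$, an open immersion
\[
j \colon \Iwtform_{\underline w}^{\rig,\Diamond} \hookrightarrow \left(\mc T_K^{\an}/\U^{\mc O}_{Y_{\Iw}(p)_K^{\tor,\an}}\right)_{\mathfrak Y_{\Iw}(p)(\underline v)^{\tor,\rig}}
\]
which is equivariant for the residual $\T^{\mc O}$-action induced by the $\GL^{\mc O}$-action on $\mc T$. A local section of $\underline \omega^\chi$ is by definition a function on $\mc T$ that is invariant under $\U^{\mc O}$ and transforms by $\chi'$ under $\T^{\mc O}$; it therefore descends to a function on $\mc T^{\an}/\U^{\mc O}$, and its pullback along $j$ is a function on $\Iwtform_{\underline w}^{\rig,\Diamond}$ still homogeneous of degree $\chi'$ for $\T^{\mc O}(\Z_p)\T_{\underline w}^{\mc O}$, hence a local section of $\underline \omega^{\dagger\chi}_{\underline v,\underline w}$. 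After restriction to the open $\mathfrak Y_{\Iw}(p)(\underline v)^{\tor,\rig}$ this defines the morphism in the statement.

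Next I would check the local assertion, which being étale-local may be verified over a chart of $\mathfrak Y_{\Iw}(p)^{\tor,\rig}$ on which $\mc T$ is trivialised. Using the inclusion $\mc F \hookrightarrow \underline \omega$ (an isomorphism after inverting $p$) together with the Hodge--Tate and canonical-subgroup trivialisations, one can take this chart to underlie simultaneously the isomorphism $\underline \omega^\chi \cong V_{\chi',K}$ of Proposition~\ref{prop: omega repr class} and the isomorphism $\underline \omega^{\dagger\chi}_{\underline v,\underline w} \cong V_{\chi'}^{\underline w-\an}$ of Proposition~\ref{prop: omega repr gen}. Over it $\mc T^{\an}/\U^{\mc O}$ becomes $\GL^{\mc O,\an}/\U^{\mc O,\an}$, and, exactly as in the proof of Proposition~\ref{prop: class mod forms} (that is, following \cite[Proposition~5.3.1]{AIP}), the image of $j$ is, in the big-cell coordinates and after dividing the Iwahori decomposition $\I^{\mc O} \cong \N^{\mc O,\op}\times\B^{\mc O}(\Z_p)$ by $\U^{\mc O}(\Z_p)$, the $\underline w$-thickening $\N^{\mc O,\op}_{\underline w}$ of $\N^{\mc O,\op}$. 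A function $f \in V_{\chi'}$ is polynomial on $\GL^{\mc O}$, so its restriction to $\N^{\mc O,\op}$ is polynomial and extends analytically to $\N^{\mc O,\op}_{\underline w}$; thus pullback along $j$ sends $f$ to the element of $V_{\chi'}^{\underline w-\an}$ obtained by restricting the domain of $f$ from $\GL^{\mc O}$ to $\I^{\mc O}$, that is, to the tautological inclusion $V_{\chi'} \hookrightarrow V_{\chi'}^{\underline w-\an}$. Injectivity then follows at once, since a polynomial on $\GL^{\mc O}$ vanishing on $\I^{\mc O} = \N^{\mc O,\op}\B^{\mc O}(\Z_p)$ vanishes identically: $\N^{\mc O,\op}$ is a $p$-adic open neighbourhood of the identity in $\U^{\mc O,\op}(\Z_p)$, hence Zariski-dense in $\U^{\mc O,\op}$, so $\I^{\mc O}$ is Zariski-dense in the big cell and thus in $\GL^{\mc O}$.

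The main obstacle will be the middle step: pinning down that the two étale-local trivialisations can be chosen compatibly, and identifying the image of $j$ with (the torus-torsor over) $\N^{\mc O,\op}_{\underline w}$. Neither point is conceptually new, however — both are mild extensions of the explicit local computations already performed for Propositions~\ref{prop: class mod forms} and~\ref{prop: omega repr gen} — so in practice I would simply follow \cite[Proposition~5.3.4]{AIP}.
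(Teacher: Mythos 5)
Your proposal is correct and takes essentially the same route the paper does: the paper gives no independent argument for this proposition but simply appeals to \cite[Proposition~5.3.4]{AIP}, whose proof is exactly the construction you sketch — pull back $\chi'$-homogeneous functions along the open immersion of Proposition~\ref{prop: class mod forms}, trivialise $\mc T$ étale-locally so that both Propositions~\ref{prop: omega repr class} and~\ref{prop: omega repr gen} hold over the same chart, identify the image of the immersion as the torus-torsor over $\N^{\mc O,\op}_{\underline w}$ inside the big cell, and observe that restriction of polynomials to $\I^{\mc O}$ is the tautological injection $V_{\chi'}\hookrightarrow V_{\chi'}^{\underline w\text{-}\an}$ by Zariski density of the Iwahori.
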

\subsection{The BGG resolution and a classicity result}
The goal of this subsection is to characterize the image of the inclusion $V_\chi \hookrightarrow V_\chi^{\locan}$.

Let $W$ be the Weyl group of $\GL^{\mc O}$ and let $\mathfrak{gl}^{\mc O}$ and $\mathfrak h^{\mc O}$ be the Lie algebras of $\GL^{\mc O}$ and $\T^{\mc O}$ respectively. Let $\Phi \subset X^\ast(\T^{\mc O})$ be the set of roots of $\mathfrak{gl}^{\mc O}$. We have a decomposition
\[
\mathfrak{gl}^{\mc O} = \mathfrak h^{\mc O} \oplus \bigoplus_{\alpha \in \Phi} \mathfrak{gl}^{\mc O}_\alpha.
\]
Let $\Phi^+ \subseteq \Phi$ be the set of positive roots given by the choice of $\B^{\mc O}$ and let $\alpha \in \Phi^+$. We fix an element $0 \neq e_\alpha \in \mathfrak{gl}^{\mc O}_\alpha$. There is an element, that we fix once and for all, $f_\alpha \in \mathfrak{gl}^{\mc O}_{-\alpha}$ such that $\langle e_\alpha, f_\alpha,h_\alpha\rangle$ is isomorphic to $\mathfrak{sl}_2$ via $e_\alpha \mapsto \begin{pmatrix} 0 & 1 \\ 0 & 0 \end{pmatrix}$, $f_\alpha \mapsto \begin{pmatrix} 0 & 0 \\ 1 & 0 \end{pmatrix}$, and $h_\alpha \mapsto \begin{pmatrix} 1 & 0 \\ 0 & -1 \end{pmatrix}$, where $h_\alpha \colonequals [e_\alpha,f_\alpha]$. We denote by $s_\alpha \in W$ the reflection $\chi \mapsto \chi - \langle\chi, \alpha^\vee\rangle \alpha$, where $\chi \in X^\ast(\T^{\mc O})$ and $\alpha^\vee$ is the coroot associated to $\alpha$. The pairing $\langle \cdot, \cdot \rangle \colon X^\ast(\T^{\mc O}) \times X_\ast(\T^{\mc O}) \to \Z$ is the natural one. Given $w \in W$ and $\chi \in X^\ast(\T^{\mc O})$, we set $w \bullet \chi \colonequals w(\chi + \rho) - \rho$, where $\rho$ is half of the sum of the positive roots.

Let now $\chi \in X^\ast(\T^{\mc O})^+$ be a dominant weight. By the results of \cite{BGG} there is an exact sequence
\begin{equation} \label{eq: BGG}
0 \to V_\chi \to V_\chi^{\an} \to \bigoplus_{\alpha \in \Phi^+} V_{s_\alpha \bullet \chi}^{\an}
\end{equation}
where the first map is the natural inclusion and the second one is given by the maps $\Theta_\alpha \colon V_\chi^{\an} \to V_{s_\alpha \bullet \chi}^{\an}$ defined as follows. Recall the action of $\I^{\mc O}$ on $V_\chi^{\an}$ given by $(i \star f)(x) = f(xi)$. It induces, by differentiation, an action of the universal enveloping algebra $U(\mathfrak{gl}^{\mc O})$ on $V_\chi^{\an}$. We set
\[
\Theta_\alpha(f) \colonequals f_\alpha^{\langle \chi, \alpha^\vee \rangle + 1 } \star f.
\]
We now fix an index $i=1,\ldots,k$. Let $1 \leq j < a_i^\pm$. We write $d_{i,j}^\pm \in \GL^{\mc O}(\Q_p)$ for the element whose components are the identity matrix except the $i^\pm$-th one, that is the matrix
\[
\begin{pmatrix}
p^{-1}\mathrm{Id}_{a_i^\pm - j} & 0\\
0 & \mathrm{Id}_j
\end{pmatrix}
\]
If $\chi \in X^\ast(\T^{\mc O})^+$ is a dominant weight, there is an operator $\delta_{i,j}^\pm$ on $V_\chi$ defined by the formula $(\delta_{i,j}^\pm \cdot f)(x)= f(d_{i,j}^\pm x (d_{i,j}^\pm)^{-1})$.

Let now $\chi \in \mc W(\underline w)$ be a $\underline w$-analytic character. Take $f \in V_\chi^{\underline w-\an}$ and $i \in \I^{\mc O}$. By the existence of the Iwahori decomposition there are $b \in \B^{\mc O}(\Z_p)$ and $n \in \N^{\mc O,\op}$ such that $i = nb$. We define an operator $\delta_{i,j}^\pm$ on $V_\chi^{\underline w-\an}$ via the formula
\[
(\delta_{i,j}^\pm \cdot f)(i)= f(d_{i,j}^\pm n (d_{i,j}^\pm)^{-1} b).
\]
Taking restriction to $\N^{\mc O,\op}$ we can identify $V_\chi^{\underline w-\an}$ with $\mc F^{\underline w-\an}(\N^{\mc O,\op},K)$ and under this identification the operator $\prod_j \delta_{i,j}^\pm$ increases the radius of analyticity in the $i^\pm$-th direction.

We use now the notation of Subsection~\ref{subsub: Uj} and we explain the relation between the operators $\U_{i,j}^\pm$ and $\delta_{i,j}^\pm$. We have the following proposition (see \cite[Proposition~6.2.3.1]{AIP}).
\begin{prop} \label{prop: heck op repr}
Let $x,y \in \mathfrak Y_{\Iw}(p)(\underline v)^{\rig}$ such that $y \in p_2(p_1^{-1}(x))$. We fix $\chi \in \mc W(\underline w)$ a $\underline w$-analytic character. Then there is a commutative diagram whose vertical arrows are isomorphisms
\[
\xymatrix{
(\underline \omega^{\dagger\chi}_{\underline v,\underline w})_y \ar[r]^-{(f^\ast)^{-1}} & (\underline \omega^{\dagger\chi}_{\underline v,\underline w})_x \\
V_{\chi'}^{\underline w-\an} \ar[r]^-{\delta_{i,j}^\pm} \ar[u]^-{\wr} & V_{\chi'}^{\underline w-\an} \ar[u]^-{\wr}
}
\]
\end{prop}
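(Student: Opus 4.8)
The plan is to prove this exactly as \cite[Proposition~6.2.3.1]{AIP}: make all the identifications explicit in a local étale chart and then track the universal isogeny $f\colon A\to A'$ defining $\U^\pm_{i,j}$ in Subsection~\ref{subsub: Uj}. That the vertical arrows are isomorphisms is exactly Proposition~\ref{prop: omega repr gen}; the content is the commutativity, once the two vertical isomorphisms are chosen compatibly. First I would reduce to a single factor: by construction $(f^\ast)^{-1}$, like $\U^\pm_{i,j}$ and $\delta_{i,j}^\pm$, acts as the identity outside the $i^\pm$-th component, and $V_{\chi'}^{\underline w-\an}\cong\mc F^{\underline w-\an}(\N^{\mc O,\op},K)$ decomposes accordingly, so it suffices to treat the factor $\Iwtform_{i,\overline w_i^\pm}^\pm\to\mathfrak Y(p^n)(\underline v)^{\tor}$. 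Since the isomorphism of Proposition~\ref{prop: omega repr gen} is only étale-local, I would fix $\tilde x\in p_1^{-1}(x)$ in $\mathcal C(\underline v)^\pm_{i,j}$ with $\tilde y\colonequals p_2(\tilde x)$ lying over $y$, and choose, in an étale neighbourhood of $\tilde x$, a local section of $\Iwtform_{i,\overline w_i^\pm}^\pm\to\mathfrak Y(p^n)(\underline v)^{\tor}$, i.e. a filtration $\Fil_\bullet\mc F_i^\pm$ with trivialized graded pieces matching, modulo $p^{w_i^\pm}$, the étale datum of $(H_{i,n}^\pm)^{\Dual}$ through $\HT_{i,w}^\pm$; this realizes the isomorphism of Proposition~\ref{prop: omega repr gen} at $\tilde x$. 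Since $f$ carries the canonical subgroup of $A$, its Cartier dual and the filtration of $\mc F_i^\pm$ to the corresponding objects for $A'$ (as recalled in Subsection~\ref{subsub: Uj}), transporting the chosen section through $f$ fixes a compatible choice, hence the vertical isomorphism, at $\tilde y$.

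The heart of the matter is to compute $f^\ast$ in these chosen sections. Since $\ker f=L_i^\pm$ satisfies $G_i^\pm[p]=\Fil_j H_{i,1}^\pm\oplus L_i^\pm[p]$, the isogeny $f$ is an isomorphism on the canonical step $\Fil_j$ but moves the complementary step, and a direct computation with the Hodge--Tate maps shows that $f^\ast$ carries the section chosen at $\tilde y$ to the translate by $d_{i,j}^\pm=\diag(p^{-1}\mathrm{Id}_{a_i^\pm-j},\mathrm{Id}_j)$ of the section chosen at $\tilde x$, up to an element of $\B^{\mc O}(\Z_p)\mathfrak B_{\underline w}^{\mc O}$; the latter ambiguity is harmless because we take the $\chi'$-homogeneous part, i.e. work inside $V_{\chi'}^{\underline w-\an}$. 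Translating back through $V_{\chi'}^{\underline w-\an}\cong\mc F^{\underline w-\an}(\N^{\mc O,\op},K)$ and using the Iwahori decomposition $i=nb$ with $n\in\N^{\mc O,\op}$, $b\in\B^{\mc O}(\Z_p)$, one finds that $(f^\ast)^{-1}$ becomes exactly the operator $\delta_{i,j}^\pm$, namely $\phi\mapsto\bigl(i\mapsto\phi(d_{i,j}^\pm\,n\,(d_{i,j}^\pm)^{-1}b)\bigr)$. Moreover the $p^{-1}$ entry in $d_{i,j}^\pm$ is what produces the analyticity shift $\overline{\underline w}\to\overline{\underline w}'$ of Subsection~\ref{subsub: Uj} and of \cite[Proposition~6.2.2.2]{AIP}, so the diagram is defined on the stated modules and the commutativity follows.

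The main obstacle is the explicit local computation of $f^\ast$ on $\mc F_i^\pm$ in terms of the Hodge--Tate trivialization: checking that, modulo $\B^{\mc O}(\Z_p)\mathfrak B_{\underline w}^{\mc O}$, it is multiplication by $d_{i,j}^\pm$. This requires careful bookkeeping of the $\mc O_i$-action coming from the Weil restriction $\Res_{\mc O_i/\Z_p}$ and of the radii $w_i^{\pm,r,s}$, so that the error terms land in the ideals prescribed in the definition of $\Iwtform_{i,\overline w_i^\pm}^\pm$. Concretely one repeats the computation of \cite[Proposition~6.2.3.1]{AIP}; the only new feature is the presence of the factors $\Res_{\mc O_i/\Z_p}$, which does not affect the argument.
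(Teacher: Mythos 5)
Your proposal is correct and matches what the paper intends: the paper gives no proof of this proposition, offering only the parenthetical citation to \cite[Proposition~6.2.3.1]{AIP}, and your reconstruction (reduce to the $i$-th factor, take compatible étale-local sections of $\Iwtform$ trivialized through the Hodge--Tate map, compute that $(f^\ast)^{-1}$ is translation by $d_{i,j}^\pm$ up to $\B^{\mc O}(\Z_p)\mathfrak{B}_{\underline w}^{\mc O}$, then pass to $\chi'$-homogeneous parts to get $\delta_{i,j}^\pm$) is precisely the argument of that reference adapted to the present setting, the only new feature being the Weil restrictions $\Res_{\mc O_i/\Z_p}$, as you note.
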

Let $\underline \nu = (\nu_{i,j}^\pm)_{i,j}$ be a tuple of positive real numbers such that
\[
\underline \nu \in \prod_{i=1}^k (\mathds R^{a_i^+-1} \times \mathds R^{a_i^--1})
\]
in case (A) and
\[
\underline \nu \in \prod_{i=1}^k \mathds R^{a_i-1}
\]
in case (C). Given $\chi \in \mc W(\underline w)$ we write $V_\chi^{\underline w-\an, < \underline \nu}$ for the intersection of the generalized eigenspace where each $\delta_{i,j}^\pm$ acts with eigenvalues of valuation smaller than $\nu_{i,j}^\pm$. (Here, for a given $i$, we take $j$ in the range $1,\ldots,a_i^\pm - 1$.)
\begin{prop} \label{prop: class sheaf}
Let $\chi \in X^\ast(\T^{\mc O})^+$ be a classical dominant weight. We can write $\chi=(k_{i,s,j}^\pm)_{i,s,j}$ as in Subsection~\ref{subsec: class mod form}. For each $i=1,\ldots,k$ and for each $j=1,\ldots, a_i^\pm - 1$ we set $\nu_{i,a_i^\pm - j}^\pm \colonequals \inf_s \{ k_{i,s,j}^\pm - k_{i,s,j+1} + 1\}$. We then have
\[
V_\chi^{\underline w-\an, < \underline \nu} \subset V_\chi.
\]
\end{prop}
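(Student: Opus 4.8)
The plan is to deduce this, as in the Siegel case treated in \cite{AIP}, from the BGG resolution \eqref{eq: BGG} together with the commutation between the operators $\delta_{i,j}^\pm$ and the maps $\Theta_\alpha$; the only external ingredient, the resolution of \cite{BGG}, is available in the generality we need.

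First I would reduce to a single simple root. Over $K$ the group $\GL^{\mc O}$ is a product, indexed by $i$, the sign $\pm$ and the $d_i$ embeddings of $\mc O_i$, of copies of $\GL_{a_i^\pm}$ (resp.\ $\GL_{a_i}$ in case~(C)), so that $V_\chi$, $V_\chi^{\underline w-\an}$ and $V_\chi^{\underline w-\an,<\underline\nu}$ are (completed) tensor products over these factors; each $\Theta_\alpha$ acts on a single such factor, whereas $\delta_{i,j}^\pm$ acts at once on the $j$-th simple direction of all $d_i$ copies of $\GL_{a_i^\pm}$ in the $i^\pm$-block, which is exactly why $\nu_{i,a_i^\pm-j}^\pm$ was defined as an infimum over $s$. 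Since each $\Theta_\alpha=f_\alpha^{\langle\chi,\alpha^\vee\rangle+1}\star(-)$ is an algebraic differential operator, it carries $V_\chi^{\underline w-\an}$ into $V_{s_\alpha\bullet\chi}^{\underline w-\an}$, and the argument of \cite{BGG} (the passage from algebraic to $\underline w$-analytic vectors being formal) shows that $V_\chi=\bigcap_{\alpha}\ker\bigl(\Theta_\alpha\colon V_\chi^{\underline w-\an}\to V_{s_\alpha\bullet\chi}^{\underline w-\an}\bigr)$, the intersection running over the simple roots. Thus it is enough to show: if $f\in V_\chi^{\underline w-\an,<\underline\nu}$ and $\alpha$ is a simple root, then $\Theta_\alpha f=0$.

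Fix such an $\alpha$, say the $m$-th simple root in the $i^\pm$-block and in the embedding $s$, and put $n\colonequals\langle\chi,\alpha^\vee\rangle=k_{i,s,m}^\pm-k_{i,s,m+1}^\pm$, so that $\Theta_\alpha$ applies $f_\alpha$ to the power $n+1$ and, by the definition of $\nu$, $n+1\ge\nu_{i,a_i^\pm-m}^\pm$. The operator attached to $\alpha$ is $\delta\colonequals\delta_{i,a_i^\pm-m}^\pm$, namely conjugation by $d_{i,a_i^\pm-m}^\pm=\diag(p^{-1}\mathrm{Id}_m,\mathrm{Id}_{a_i^\pm-m})$, whose threshold in the definition of $V_\chi^{\underline w-\an,<\underline\nu}$ is precisely $\nu_{i,a_i^\pm-m}^\pm$. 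Since this conjugation scales the root vector $f_\alpha$ by exactly $p$, a direct computation (or Proposition~\ref{prop: heck op repr}) yields
\[
\delta\circ\Theta_\alpha=p^{-(n+1)}\,\Theta_\alpha\circ\delta\colon V_\chi^{\underline w-\an}\longrightarrow V_{s_\alpha\bullet\chi}^{\underline w-\an}.
\]
Hence if $f$ is a generalized $\delta$-eigenvector with eigenvalue $\lambda$, $\val(\lambda)<\nu_{i,a_i^\pm-m}^\pm$, then $\Theta_\alpha f$ is a generalized $\delta$-eigenvector with eigenvalue $p^{-(n+1)}\lambda$, of valuation $\val(\lambda)-(n+1)<\nu_{i,a_i^\pm-m}^\pm-(n+1)\le0$. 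On the other hand, realizing $V_{s_\alpha\bullet\chi}^{\underline w-\an}$ as a space of analytic functions on $\N^{\mc O,\op}_{\underline w}$ and expanding in the coordinates rescaled by $\delta$, one checks that every generalized $\delta$-eigenvalue on $V_{s_\alpha\bullet\chi}^{\underline w-\an}$ has the form $p^{N}$ with $N\ge0$, hence valuation $\ge0$. Therefore $\Theta_\alpha f=0$, which is what we wanted.

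I expect the two points needing genuine care to be the commutation relation — in particular checking that the exponent is $-(n+1)$, the sign that makes the eigenvalue of $\Theta_\alpha f$ fall into negative valuation — and the assertion that $\delta$ has only non-negatively valued generalized eigenvalues on each $V_\psi^{\underline w-\an}$, which is elementary but uses that $\N^{\mc O,\op}$ has its entries in $p\mc O_i$ and that $\delta$ is, in suitable coordinates, the substitution multiplying the relevant variables by $p$. The rest is bookkeeping on top of the BGG resolution, exactly as in \cite{AIP}.
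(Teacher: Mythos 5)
Your proof is correct and follows essentially the same route as the paper, which itself simply refers the reader to the proof of Proposition~2.5.1 of \cite{AIP} after the reduction to a single factor (there the same BGG resolution plus commutation with the $\delta$ operators is used). You actually make the argument more explicit than the paper does: you spell out the commutation relation $\delta\circ\Theta_\alpha=p^{-(n+1)}\,\Theta_\alpha\circ\delta$ and the observation that $\delta$ is a contraction on $V_{s_\alpha\bullet\chi}^{\underline w-\an}$ (so that any generalized eigenvalue has non-negative valuation), which is precisely the mechanism behind the AIP proof. The slight imprecision ``every generalized $\delta$-eigenvalue has the form $p^N$'' should be phrased as ``$\delta$ has operator norm $\leq 1$, so $\delta-\mu$ is invertible whenever $\val(\mu)<0$''; the conclusion is the same. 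The parenthetical reference to Proposition~\ref{prop: heck op repr} is not quite the right citation for the scaling of $f_\alpha$ under conjugation by $d_{i,j}^\pm$ (that is an elementary matrix computation), but it does not affect the argument.
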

\begin{proof}
We can  fix an index $i$ and work only in the `$i$-th component'. Taking the base change of all objects to $\mc O_i^\pm$ (that makes everything split) and using the exact sequence \eqref{eq: BGG}, the proof is completely analogous to that of \cite[Proposition~2.5.1]{AIP}.
\end{proof}
\subsection{Classicity at the level of sheaves}
Let $\chi = (k_{i,s,j}^\pm)_{i,s,j} \in X^\ast(\T^{\mc O})^+$. We now give a characterization of the image of the inclusion $\underline \omega^\chi_{|\mathfrak Y_{\Iw}(p)^{\tor,\rig}} \hookrightarrow \underline \omega^{\dagger\chi}_{\underline v,\underline w}$ of Proposition~\ref{prop: incl omega repr}.

We can construct a `relative version' over $\mathfrak Y_{\Iw}(p)(\overline v)^{\tor,\rig}$ of the exact sequence \eqref{eq: BGG}. Using then Propositions~\ref{prop: omega repr class}, \ref{prop: omega repr gen}, and \ref{prop: incl omega repr} we obtain the exact sequence of sheaves on $\mathfrak Y_{\Iw}(p)(\overline v)^{\tor,\rig}$
\begin{equation} \label{eq: BGG rel}
0 \to \underline \omega^\chi_{|\mathfrak Y_{\Iw}(p)^{\tor,\rig}} \to \underline \omega^{\dagger\chi}_{\underline v,\underline w} \to \bigoplus_{\alpha \in \Phi^+} \underline \omega^{\dagger s_\alpha \bullet \chi}_{\underline v,\underline w}
\end{equation}
Let $\underline \nu = (\nu_{i,j}^\pm)_{i,j}$ be a tuple of positive real numbers such that $\underline \nu \in \prod_{i=1}^k (\mathds R^{a_i^+} \times \mathds R^{a_i^-})$ in case (A) and $\underline \nu \in \prod_{i=1}^k \mathds R^{a_i}$ in case (C). In case (A) we assume that, for each $i=1,\ldots,k$, we have $\nu_{i,a_i^+}^+ = \nu_{i,a_i^-}^-$. This is a natural condition since $\U_{i,a_i^+}^+ = \U_{i,a_i^-}^-$. We can then write $\nu_{i,a_i^\pm}^\pm \colonequals \nu_{i,a_i^+}^+ = \nu_{i,a_i^-}^-$.

We define $\M^{\dagger \chi, < \underline \nu}_{\underline v,\underline w}$ to be the intersection of the generalized eigenspace where each $\U_{i,j}^\pm$ acts with eigenvalues of valuation smaller than $\nu_{i,j}^\pm$. (The difference with the above situation is that we add the condition the the $\U_{i,a_i^\pm}^\pm$-operator acts with finite slope.)

We now suppose that, for each $i$ and for each $j=1,\ldots, a_i^\pm$ we have $\nu_{a_i^\pm - j}^\pm = \inf_s \{k_{i,s,j}^\pm - k_{i,s,j+1} + 1\}$. As in \cite[Proposition~7.3.1]{AIP} we have the following
\begin{prop} \label{prop: class sheaves}
We have the inclusion
\[
\M^{\dagger \chi, < \underline \nu}_{\underline v,\underline w} \subset \Homol^0(\mathfrak Y_{\Iw}(p)(\underline v)^{\tor,\rig},\underline \omega^\chi).
\]
\end{prop}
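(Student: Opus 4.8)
The plan is to follow \cite[Proposition~7.3.1]{AIP} closely; I sketch the strategy. The starting point is the relative BGG sequence \eqref{eq: BGG rel} of sheaves on $\mathfrak Y_{\Iw}(p)(\underline v)^{\tor,\rig}$. Taking global sections and using that $\Homol^0$ is left exact, one gets an exact sequence
\[
0 \to \Homol^0(\mathfrak Y_{\Iw}(p)(\underline v)^{\tor,\rig},\underline\omega^\chi) \to \M^{\dagger\chi}_{\underline v,\underline w} \xrightarrow{\ \bigoplus_{\alpha}\Theta_\alpha\ } \bigoplus_{\alpha \in \Phi^+} \M^{\dagger s_\alpha\bullet\chi}_{\underline v,\underline w},
\]
so an $F \in \M^{\dagger\chi}_{\underline v,\underline w}$ lies in $\Homol^0(\mathfrak Y_{\Iw}(p)(\underline v)^{\tor,\rig},\underline\omega^\chi)$ if and only if $\Theta_\alpha(F) = 0$ for every $\alpha \in \Phi^+$. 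Hence it is enough to show that each $\Theta_\alpha$ kills $\M^{\dagger\chi,<\underline\nu}_{\underline v,\underline w}$, for $\underline\nu$ as in the statement. Since $\GL^{\mc O}$, the sequence \eqref{eq: BGG rel}, and the operators $\U_{i,j}^\pm$ all decompose over the indices $i$ and the signs, I would fix $i$ and a sign, base change to $\mc O_i^\pm$ so that the relevant Weil restriction splits, and work with a single factor $\GL_{a_i^\pm}$ and a positive root $\alpha$ of it.

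The key step is the commutation between the Hecke operators $\U_{i,j}^\pm$ at $p$ and the BGG differentials $\Theta_\alpha$. By Proposition~\ref{prop: heck op repr}, on stalks the operator $\U_{i,j}^\pm$ is realized --- up to the trace over the finite étale fibres of $p_1$, with which the horizontal operators $\Theta_\alpha$ commute --- by the conjugation operator $\delta_{i,j}^\pm$ on the local models $V_{\chi'}^{\underline w-\an}$; so it suffices to compute the interaction of $\delta_{i,j}^\pm$ with $\Theta_\alpha$ at the level of these representations. Writing $\Theta_\alpha(f) = f_\alpha^{\langle\chi',\alpha^\vee\rangle+1}\star f$ and $\delta_{i,j}^\pm$ as conjugation by $d_{i,j}^\pm$, and using that $\mathrm{Ad}(d_{i,j}^\pm)$ scales the root vector $f_\alpha \in \mathfrak{gl}^{\mc O}_{-\alpha}$ by a power of $p$ which is trivial unless $\alpha$ is the simple root attached to the pair $(i,j)$, one obtains a relation
\[
\U_{i,j}^\pm \circ \Theta_\alpha = p^{\,c_{i,j,\alpha}}\,\Theta_\alpha \circ \U_{i,j}^\pm \colon \M^{\dagger\chi}_{\underline v,\underline w} \to \M^{\dagger s_\alpha\bullet\chi}_{\underline v,\underline w},
\]
with $c_{i,j,\alpha} = 0$ for the non-attached pairs and $c_{i,j,\alpha} = -(\langle\chi',\alpha^\vee\rangle+1) = -(k_{i,s,j}^\pm - k_{i,s,j+1}^\pm + 1)$, for the relevant embedding $s$, for the attached one; this exponent is exactly the threshold $\nu$ appearing in the statement, the index shift $j \leftrightarrow a_i^\pm - j$ coming from the action of $\mathrm{Ad}(d_{i,j}^\pm)$ on the root lattice. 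Tracking the analyticity radii through $\U_{i,j}^\pm$ and $\Theta_\alpha$ is part of this step.

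Finally, suppose $\Theta_\alpha(F) \neq 0$ for some $\alpha \in \Phi^+$, attached to a pair $(i,j)$. Then $G \colonequals \Theta_\alpha(F)$ is a nonzero element of $\M^{\dagger s_\alpha\bullet\chi}_{\underline v,\underline w}$ which, by the commutation relation and the hypothesis $F \in \M^{\dagger\chi,<\underline\nu}_{\underline v,\underline w}$, lies in a generalized eigenspace for $\U_{i,j}^\pm$ with eigenvalue of valuation $< \nu_{i,j}^\pm - (\langle\chi',\alpha^\vee\rangle+1) = 0$; since $\U_{i,j}^\pm$ (in its unnormalized form, for $j < a_i^\pm$) preserves integral forms, its eigenvalues have non-negative valuation, a contradiction. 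Hence $\Theta_\alpha(F) = 0$ for all $\alpha$, so $F \in \Homol^0(\mathfrak Y_{\Iw}(p)(\underline v)^{\tor,\rig},\underline\omega^\chi)$. I expect the main obstacle to be exactly the bookkeeping underlying the last two paragraphs --- matching the valuations of the $\U_{i,j}^\pm$-eigenvalues against the analyticity radii $\underline w$ so that the inequality comes out precisely at the stated bound --- but this is entirely parallel to \cite[Propositions~2.5.1 and 7.3.1]{AIP} (compare also Proposition~\ref{prop: class sheaf}).
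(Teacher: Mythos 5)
Your proposal is correct and follows essentially the same route as the paper, which simply states the result ``as in [AIP, Proposition~7.3.1]'': the relative BGG sequence \eqref{eq: BGG rel}, left exactness of global sections, the intertwining of $\Theta_\alpha$ with the $\U_{i,j}^\pm$-operators (via Proposition~\ref{prop: heck op repr} and the scaling of $f_\alpha$ under $\mathrm{Ad}(d_{i,j}^\pm)$), and integrality of the unnormalized $\U_{i,j}^\pm$ to rule out negative slopes. The genuine work is, as you flag, the bookkeeping matching the $p$-power in the commutation relation against the threshold $\underline\nu$ (including the $j\leftrightarrow a_i^\pm-j$ shift and the $\chi\leftrightarrow\chi'=-w_0\chi$ dualization), which is carried out in \cite{AIP} and carries over here with only notational changes.
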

\subsection{The classicity theorem} \label{subsec: class thm}
Let $\chi = (k_{i,s,j}^\pm)_{i,s,j} \in X^\ast(\T^{\mc O})^+$ be a classical dominant weight.
\begin{teo} \label{thm: class}
Let $\underline \nu$ be as in the Proposition~\ref{prop: class sheaves}. We moreover assume that, for each $i=1,\ldots,k$, we have
\[
\nu_{i,a_i^\pm}^\pm = \inf_{1 \leq s \leq d_i} (k_{i,s,a_i^+}^+ + k_{i,s,a_i^-}^-) - d_ia_i^+a_i^-
\]
in case (A) and
\[
\nu_{i,a_i} = \inf_{1 \leq s \leq d_i} k_{i,s,a_i} - \frac{d_ia_i(a_i+1)}{2}
\]
in case (C). Then we have an inclusion
\[
\M^{\dagger \chi, < \underline \nu}_{\underline v,\underline w} \subset \M^{\chi},
\]
hence any locally analytic overconvergent modular form in $\M^{\dagger \chi, < \underline \nu}_{\underline v,\underline w}$ is classical.
\end{teo}
\begin{proof}
Using Proposition~\ref{prop: class sheaf}, this follows by the main results of \cite{stefan}.
\end{proof}

\bibliographystyle{amsalpha}
\bibliography{biblio}

\providecommand{\bysame}{\leavevmode\hbox to3em{\hrulefill}\thinspace}
\providecommand{\MR}{\relax\ifhmode\unskip\space\fi MR }
\providecommand{\MRhref}[2]{%
  \href{http://www.ams.org/mathscinet-getitem?mr=#1}{#2}
}
\providecommand{\href}[2]{#2}
\begin{thebibliography}{AIP15b}

\bibitem[AIP15a]{AIPhilb}
Fabrizio Andreatta, Adrian Iovita, and Vincent Pilloni, \emph{{$p$}-adic
  families of {H}ilbert modular cuspforms}, to appear in Astérisque. Available
  at \url{http://users.mat.unimi.it/users/andreat/}, 2015.

\bibitem[AIP15b]{AIP}
\bysame, \emph{{$p$}-adic families of {S}iegel modular cuspforms}, Ann. of
  Math. (2) \textbf{181} (2015), no.~2, 623--697.

\bibitem[AIS14]{over}
Fabrizio Andreatta, Adrian Iovita, and Glenn Stevens, \emph{Overconvergent
  modular sheaves and modular forms for {${\bf GL}_{2/F}$}}, Israel J. Math.
  \textbf{201} (2014), no.~1, 299--359.

\bibitem[BR15]{eigen}
Riccardo Brasca and Giovanni Rosso, \emph{Eigenvarieties for non-cuspidal
  automorphic forms over certain {PEL} type {S}himura varieties}, preprint,
  2015.

\bibitem[Bra13]{shimura}
Riccardo Brasca, \emph{{$p$}-adic modular forms of non-integral weight over
  {S}himura curves}, Compos. Math. \textbf{149} (2013), no.~1, 32--62.

\bibitem[BSP15]{stefan}
Stefan Bijakowski, Benoît Stroh, and Vincent Pilloni, \emph{Classicité de
  formes modulaires surconvergentes}, preprint, available at
  \url{http://perso.ens-lyon.fr/vincent.pilloni/}. To appear in Annals of
  Math., 2015.

\bibitem[Buz07]{buzz_eigen}
Kevin Buzzard, \emph{Eigenvarieties}, {$L$}-functions and {G}alois
  representations, London Math. Soc. Lecture Note Ser., vol. 320, 2007,
  pp.~59--120.

\bibitem[Che11]{chenevier}
Ga{\"e}tan Chenevier, \emph{On the infinite fern of {G}alois representations of
  unitary type}, Ann. Sci. \'Ec. Norm. Sup\'er. (4) \textbf{44} (2011), no.~6,
  963--1019.

\bibitem[Eme06]{emerton}
Matthew Emerton, \emph{On the interpolation of systems of eigenvalues attached
  to automorphic {H}ecke eigenforms}, Invent. Math. \textbf{164} (2006), no.~1,
  1--84.

\bibitem[Far10]{farg_harder}
Laurent Fargues, \emph{La filtration de {H}arder-{N}arasimhan des sch\'emas en
  groupes finis et plats}, J. Reine Angew. Math. \textbf{645} (2010), 1--39.

\bibitem[Far11]{fargues_can}
\bysame, \emph{La filtration canonique des points de torsion des groupes
  {$p$}-divisibles}, Ann. Sci. \'Ec. Norm. Sup\'er. (4) \textbf{44} (2011),
  no.~6, 905--961, With collaboration of Yichao Tian.

\bibitem[Jon11]{BGG}
Owen T.~R. Jones, \emph{An analogue of the {BGG} resolution for locally
  analytic principal series}, J. Number Theory \textbf{131} (2011), no.~9,
  1616--1640.

\bibitem[KL05]{kislai}
Mark Kisin and King~Fai Lai, \emph{Overconvergent {H}ilbert modular forms},
  Amer. J. Math. \textbf{127} (2005), no.~4, 735--783.

\bibitem[Kot92]{kottwitz}
Robert~E. Kottwitz, \emph{Points on some {S}himura varieties over finite
  fields}, J. Amer. Math. Soc. \textbf{5} (1992), no.~2, 373--444.

\bibitem[Lan]{lan_ram}
Kai-Wen Lan, \emph{Compactifications of {PEL}-type {S}himura varieties and
  {K}uga {F}amilies with ordinary loci}, preprint. Available at
  \url{http://math.umn.edu/~kwlan}.

\bibitem[Lan13]{lan}
\bysame, \emph{Arithmetic compactifications of {PEL}-type {S}himura varieties},
  London Mathematical Society Monographs Series, vol.~36, Princeton University
  Press, Princeton, NJ, 2013.

\bibitem[L{\"u}t74]{extension}
Werner L{\"u}tkebohmert, \emph{Der {S}atz von {R}emmert-{S}tein in der
  nichtarchimedischen {F}unktionentheorie}, Math. Z. \textbf{139} (1974),
  69--84.

\bibitem[MT15]{sieghilb}
Chung~Pang Mok and Fucheng Tan, \emph{Overconvergent {F}amilies of
  {S}iegel-{H}ilbert {M}odular {F}orms}, Canad. J. Math. \textbf{67} (2015),
  no.~4, 893--922.

\bibitem[Pap00]{pappas_non_flat}
Georgios Pappas, \emph{On the arithmetic moduli schemes of {PEL} {S}himura
  varieties}, J. Algebraic Geom. \textbf{9} (2000), no.~3, 577--605.

\bibitem[Pil13]{vincent}
Vincent Pilloni, \emph{Overconvergent modular forms}, Ann. Inst. Fourier
  (Grenoble) \textbf{63} (2013), no.~1, 219--239.

\bibitem[RZ96]{rapoport_zink}
M.~Rapoport and Th. Zink, \emph{Period spaces for {$p$}-divisible groups},
  Annals of Mathematics Studies, vol. 141, Princeton University Press,
  Princeton, NJ, 1996.

\bibitem[Sch15]{peter_tors}
Peter Scholze, \emph{On torsion in the cohomology of locally symmetric
  varieties}, Ann. of Math. (2) \textbf{182} (2015), no.~3, 945--1066.

\bibitem[Str10]{ben_these}
Beno{\^{\i}}t Stroh, \emph{Compactification de vari\'et\'es de {S}iegel aux
  places de mauvaise r\'eduction}, Bull. Soc. Math. France \textbf{138} (2010),
  no.~2, 259--315.

\bibitem[Urb11]{urban_eigen}
Eric Urban, \emph{Eigenvarieties for reductive groups}, Ann. of Math. (2)
  \textbf{174} (2011), no.~3, 1685--1784.

\bibitem[Wed99]{ord_pel}
Torsten Wedhorn, \emph{Ordinariness in good reductions of {S}himura varieties
  of {PEL}-type}, Ann. Sci. \'Ecole Norm. Sup. (4) \textbf{32} (1999), no.~5,
  575--618.

\end{thebibliography}
\end{document}